\documentclass[onefignum,onetabnum]{siamart190516}
\usepackage{amssymb}
\usepackage{amsmath}
\usepackage{amsfonts}
\usepackage{color}
\usepackage{graphicx}
\setcounter{MaxMatrixCols}{30}
\usepackage{bm}
\usepackage{mathtools}

\newcommand{\inc}{\bm{\iota}^{(m)}}
\newcommand{\R}{\mathbb{R}}
\newcommand{\N}{\mathbb{N}}
\newcommand{\Z}{\mathbb{Z}}
\newcommand{\cX}{\mathcal X}

\newcommand{\bx}{\bar{x}}
\newcommand{\ba}{\bar{a}}
\newcommand{\bb}{\bar{b}}

\newcommand{\tx}{\tilde{x}}
\newcommand{\bydef}{\,\stackrel{\mbox{\tiny\textnormal{\raisebox{0ex}[0ex][0ex]{def}}}}{=}\,}
\newcommand{\dagA}{A^\dagger}

\newsiamremark{remark}{Remark}

\headers{Free vibrations in a wave equation modeling MEMS}{C. Garc\'{\i}a-Azpeitia, and J.-P. Lessard}

\title{Free vibrations in a wave equation modeling MEMS\thanks{The second author was funded by NSERC.}}

\author{Carlos Garc\'{\i}a-Azpeitia\thanks{
Depto. Matem\'{a}ticas y Mec\'{a}nica
IIMAS, Universidad Nacional Aut\'{o}noma de M\'{e}xico, Apdo. Postal 20-726,
01000 Ciudad de M\'{e}xico, M\'{e}xico,
(\email{cgazpe@mym.iimas.unam.mx}).}
\and Jean-Philippe Lessard\thanks{McGill University, Department of Mathematics and
Statistics, 805 Sherbrooke Street West, Montreal, QC, H3A 0B9, Canada
  (\email{jp.lessard@mcgill.ca}), (\url{http://www.math.mcgill.ca/jplessard/}).}
}




\begin{document}

\maketitle

\begin{abstract}
We study a nonlinear wave equation appearing as a model for a membrane
(without viscous effects) under the presence of an electrostatic potential
with strength $\lambda$. The membrane has a unique stable branch of steady
states $u_{\lambda}$ for $\lambda\in\lbrack0,\lambda_{\ast}]$. We prove that
the branch $u_{\lambda}$ has an infinite number of branches of periodic
solutions (free vibrations) bifurcating when the parameter $\lambda$ is
varied. Furthermore, using a functional setting, we compute numerically the
branch $u_{\lambda}$ and their branches of periodic solutions. This approach
is useful to validate rigorously the steady states $u_{\lambda}$ at the critical value
$\lambda_{\ast}$.

Dedicated to the memory of G. Flores.

\end{abstract}

\begin{keywords}
 microelectromechanical system, wave equation, periodic solutions
\end{keywords} 
\begin{AMS}
35B10 35B32  35L81 
  \end{AMS}

\section{Introduction}

We consider an idealized device that consists of an elastic plate suspended
above a rigid ground plate. This device falls in the category of
microelectromechanical systems (MEMS). The membrane is taken to be
rectangular with two fixed parallel sides, while the other sides are
considered to be thin and free. When a potential difference is applied
between the membrane and the plate, the membrane deflects towards the ground
plate. We assume that dissipation which might result from viscous effects on
the moving membrane can be neglected. Under these assumptions, the
deformation of the elastic membrane is described by the dimensionless
equation 
\begin{equation}
U_{tt}-U_{xx}+\frac{\lambda }{\left( 1+U\right) ^{2}}=0,\text{\qquad }x\in
\lbrack -\pi /2,\pi /2],  \label{1}
\end{equation}%
where $U(x,t)$ satisfies the Dirichlet boundary conditions $U(\pm \pi /2)=0$%
, and the parameter $\lambda $ represents the strength of the applied
voltage. A derivation of the nonlinearity leading to a general equation
modeling the electrostatic membrane%
\begin{equation*}
\varepsilon ^{2}U_{tt}+\nu U_{t}-U_{xx}+\frac{\lambda f(x)}{\left(
1+U\right) ^{2}}=0,
\end{equation*}
for which the nonlinear wave equation \eqref{1} is a particular case, can be
found in \cite{Pelesko_2002}, where the parameter $\varepsilon $ represents
the strength of the inertial term, $\nu $ the viscosity and the function $%
f(x)$ encodes the dielectric permitivity of the membrane.

In the design of microelectronic devices, it is relevant to study wether or
not the membrane touches the ground plate. This phenomenon is called \emph{%
touchdown} or \emph{quenching}. Mathematically, quenching occurs if there is
a point $(x,t)$ such that $U(x,t)=-1$. A vast literature exists on the study
of MEMS via parabolic and hyperbolic PDE modelling. Equation~\eqref{1} is in
fact a special case of the more general MEMS parabolic and hyperbolic PDE
models, and a vast mathematical literature is dedicated to their study. Let
us give a few examples. %

The local existence of solutions and the existence of quenching at a finite
time for a parabolic equation modelling MEMS is analyzed in \cite{Fl2,Gu1,Gu2}, and references therein. The case of a nonlocal parabolic
equation modelling MEMS is proposed in \cite{Fl3}. The existence of
solutions and the finite-time quenching for a damped wave equation modelling
MEMS is analyzed in \cite{Fl1}. Generalizations of the wave equation (\ref{1}%
) have been studied in \cite{Chang_Levine,Ka2,Smith}. 

The study of periodic orbits in MEMS models have also received their fair
share of attention. Their relevance comes from the fact that they persist as
small oscillations with no quenching. In \cite{Ka1}, periodic solutions were
observed numerically by solving an initial value problem for a non-local
wave equation modelling MEMS. The study of periodic solutions in Hamiltonian
PDEs (such as equation \eqref{1}) presents intrinsic problems associated to
infinite-dimensional kernels \cite{Ra78}, lack of compactness \cite{Ra}, or
small divisor problems \cite{CrWa93}. The small divisor problem was avoided
in \cite{AmZe80,Ki79,Ra78} by imposing restrictions on
the temporal period of the solutions of a nonlinear wave equation. By
imposing similar restrictions, the articles~\cite{Ki00} obtains the
existence of continuous branches of periodic solutions for a nonlinear wave
equation in a sphere, and in \cite{Ga19,Ga17} for a Hamiltonian
PDE appearing in the $n$-vortex filament problem.

The equation (\ref{1}) has a family of stable steady states $u_{\lambda}$
for $\lambda\in [0,\lambda_{\ast}]$. For instance, the existence of steady
states of \eqref{1} in the $N$-dimension ball was proven in \cite{Go1}. In
the present work, we prove the existence of continuous families of periodic
solutions near the branch of steady states (Theorem \ref{thm:main}). This is
our main contribution, and to the best of our knowledge, this result is new.
The main challenge encountered when proving the existence of the periodic
solutions is that the trivial branch $u_{\lambda}$ and its associated
spectrum are not known explicitly. Indeed, while the existence of continuous
families of periodic solutions has been obtained before for Hamiltonian PDEs
in \cite{Ga19,Ga17,Ki00,Ki79}, in those articles
the trivial branch and the spectrum of its linearized equation are known
explicitly. We overcome this problem with delicate estimates of the spectrum
which depend on an accurate estimate for the steady state $%
u_{\lambda_{\ast}} $ at the critical value $\lambda_{\ast}$. 
While the critical value $\lambda_{\ast}$ is known (see for instance \cite%
{Fl3,Ka2}), no estimate for the steady state $u_{\lambda_{\ast}}$ is known.
That leads to our second main contribution, which is to obtain precise and
rigorous estimate on the steady state $u_{\lambda_{\ast}}$ at the critical
value $\lambda_{\ast}$ (Theorem \ref{thm:saddle-node}). Finally, our third
contribution is to present a systematic approach to compute numerically the
families of periodic solutions using Chebyshev series expansion (in space)
and Fourier series expansions (in time) (see Figures \ref%
{fig:branches_po_k_1_q11},~\ref{fig:po_k1_q_11},~\ref%
{fig:branches_po_k_2_q47} and \ref{fig:po_k2_q_47}).

Specifically, the linear operator of the stationary equation (\ref{1}) at $%
u_{\lambda }$ is given by%
\begin{equation}
A(\lambda )u\bydef-\partial _{x}^{2}u-\frac{2\lambda }{\left( 1+u_{\lambda
}\right) ^{3}}u:H_{0}^{2}\subset L^{2}\rightarrow L^{2}\text{,}  \label{A}
\end{equation}%
where $H_{0}^{2}([-\pi /2,\pi /2];\mathbb{R})$ is the Sobolev space of
functions $u(x)$ satisfying Dirichlet boundary conditions $u(\pm \pi /2)=0$.
The operator $A$ is self-adjoint and positive definite for $\lambda \in
\lbrack 0,\lambda _{\ast })$ because the first eigenvalue $\mu _{1}(\lambda )
$ is positive (Theorem 4.2 in \cite{Go1}). Thus $A$ has eigenvalues $0<\mu
_{1}(\lambda )<\mu _{2}(\lambda )<\cdots $ with eigenfunctions satisfying%
\begin{equation}
A(\lambda )v_{k}(x;\lambda )=\mu _{k}(\lambda )v_{k}(x;\lambda ),\qquad k\in 
\mathbb{N}.  \label{ElipEigen}
\end{equation}%
Our main theorem regarding the existence of periodic solutions is the
following.

\begin{theorem}
\label{thm:main} There is an infinite number of non-resonant parameters $%
\lambda _{0}\in (0,\lambda _{\ast })$, associated with numbers $p,q,k\in 
\mathbb{N}$ by the relation $\mu _{k}(\lambda _{0})=\left( p/q\right) ^{2},$
such that there is a local continuum of $2\pi q/p$-periodic solutions
bifurcating from the steady state $u_{\lambda }(x)$ with $\lambda =\lambda
_{0}$. The local bifurcation consists of free vibrations satisfying the
estimates%
\begin{align}
U(t,x;\lambda )& =u_{\lambda _{0}}(x)+b\cos (pt/q)v_{k}(x;\lambda _{0})+%
\mathcal{O}_{C^{4}}(b^{2})\text{,}  \label{es} \\
\lambda & =\lambda _{0}+\mathcal{O}(b),  \notag
\end{align}%
where $b\in \lbrack 0,b_{0}]$ represents a parametrization of the local
bifurcation for some $b_{0}>0$ and $\mathcal{O}_{C^{4}}(b^{2})$ is a
function of order $b^{2}$ in the $C^{4}$-norm. Furthermore, the bifurcation
has symmetries%
\begin{align*}
U(t,x)& =U(-t,x)=U(t,-x),\hspace{1.88cm}\text{ for }k\text{\ odd,} \\
U(t,x)& =U(-t,x)=U(t+p\pi /q,-x),\qquad \text{ for }k\text{ even.}
\end{align*}
\end{theorem}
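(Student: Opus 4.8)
The plan is to obtain the branch by a Lyapunov--Schmidt reduction, organised as a bifurcation from a simple eigenvalue in the spirit of Crandall--Rabinowitz, with $\lambda$ playing the role of the bifurcation parameter. Writing $U=u_\lambda(x)+v(t,x)$ and using that $u_\lambda$ is a steady state, $v$ solves $v_{tt}-v_{xx}+g(x,v,\lambda)=0$ with $g(x,v,\lambda)=\lambda(1+u_\lambda+v)^{-2}-\lambda(1+u_\lambda)^{-2}$, so that $g(x,0,\lambda)\equiv0$ and the linearisation in $v$ at $v=0$ equals $-\partial_x^2-2\lambda(1+u_\lambda)^{-3}=A(\lambda)$. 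Rescaling time by $\tau=(p/q)t$ and seeking $2\pi$-periodic profiles in $\tau$, the problem becomes $F(v,\lambda)=0$ with
\[
F(v,\lambda)=\left(p/q\right)^{2}\partial_\tau^2 v-\partial_x^2 v+g(x,v,\lambda),
\]
posed on a Banach space $X$ of functions that are even in $\tau$ (this factors out the time-translation symmetry and removes the sine companion of the bifurcating mode), satisfy the Dirichlet condition in $x$, and — when $k$ is even — are invariant under the shift-reflection $\rho:(\tau,x)\mapsto(\tau+\pi,-x)$; both $u_\lambda$ and the candidate bifurcating mode lie in $X$, and $F(0,\lambda)\equiv0$. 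Taking $X=H^{s}$ with $s>5$ will give the $C^{4}$ statement in \eqref{es} by Sobolev embedding, and the symmetries asserted in the theorem are exactly the invariances defining $X$ (evenness in $\tau$, plus evenness in $x$ for $k$ odd, resp. $\rho$-invariance for $k$ even).

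The linearisation at the trivial branch, $L_0:=D_vF(0,\lambda_0)=(p/q)^2\partial_\tau^2+A(\lambda_0)$, is diagonalised by $\cos(n\tau)\,v_j(x;\lambda_0)$ with multiplier $\mu_j(\lambda_0)-(p/q)^2 n^2$. Imposing the resonance $\mu_k(\lambda_0)=(p/q)^2$, the non-resonance condition $\mu_j(\lambda_0)\neq(np/q)^2$ for every $(n,j)\neq(1,k)$, and using simplicity of $\mu_k(\lambda_0)$, the kernel of $L_0$ in $X$ is one-dimensional, spanned by $w_0(\tau,x)=\cos\tau\,v_k(x;\lambda_0)$; the cokernel is spanned by $w_0$ as well, by self-adjointness. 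The rationality of $p/q$ is crucial here: it forces $|\mu_j(\lambda_0)-(np/q)^2|$ to stay bounded below by a positive constant off the resonant mode — the spectrum of $L_0$ accumulates only at $-\infty$ and at negative values governed by the mean of the potential $-2\lambda_0(1+u_{\lambda_0})^{-3}$, hence stays away from $0$ for admissible $\lambda_0$ — so that $L_0$ has a bounded inverse on the complement of its kernel and no small-divisor problem arises. The smooth dependence of $u_\lambda$, of the simple eigenvalue $\mu_k(\lambda)$ and of $v_k(\cdot;\lambda)$ on $\lambda$ near $\lambda_0$ that the reduction needs follows from the implicit function theorem applied to the stationary equation (whose linearisation is the invertible operator $A(\lambda)$) and from analytic perturbation theory for the simple eigenvalue $\mu_k$.

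The transversality condition $\partial_\lambda D_vF(0,\lambda_0)w_0\notin\mathrm{Range}(L_0)$ reduces, since $\partial_\lambda D_vF(0,\lambda)=\partial_\lambda A(\lambda)$, to $\langle\partial_\lambda A(\lambda_0)v_k,v_k\rangle_{L^2}\neq0$, which by the Feynman--Hellmann identity is precisely $\mu_k'(\lambda_0)\neq0$. Given such a $\lambda_0$, a direct Lyapunov--Schmidt reduction (equivalently, Crandall--Rabinowitz) applies: the component $\phi=Qv$ is produced from the contraction mapping principle using that $L_0$ has a bounded inverse on the range of $Q$ and that the genuinely nonlinear part of $F$ vanishes quadratically at the trivial branch, and the remaining scalar equation, whose $\lambda$-derivative at the bifurcation point is a nonzero multiple of $\mu_k'(\lambda_0)$, is solved for $\lambda=\lambda(b)$ by the implicit function theorem. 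This yields a local curve $b\mapsto(v(b),\lambda(b))$, $b\in(-b_0,b_0)$, with $v(0)=0$, $\lambda(0)=\lambda_0$ and $v(b)=b\,w_0+\mathcal O_X(b^2)$. Since $F$ is equivariant under the half-period shift $\sigma:(\tau,x)\mapsto(\tau+\pi,x)$, which sends $w_0\mapsto-w_0$, local uniqueness of the branch forces $(v(-b),\lambda(-b))=(\sigma v(b),\lambda(b))$; hence the branch is a pitchfork, $\lambda(b)=\lambda_0+\mathcal O(b^2)$, and $b\in[0,b_0]$ parametrises the distinct solutions. Transforming back and using $u_{\lambda(b)}-u_{\lambda_0}=\mathcal O(\lambda(b)-\lambda_0)=\mathcal O(b^2)$,
\[
U(t,x;\lambda)=u_{\lambda(b)}(x)+v(b)(pt/q,x)=u_{\lambda_0}(x)+b\cos(pt/q)\,v_k(x;\lambda_0)+\mathcal O_{C^4}(b^2),
\]
together with $\lambda=\lambda_0+\mathcal O(b)$, which is \eqref{es}; the symmetries are inherited from the invariances defining $X$.

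It remains to produce infinitely many parameters $\lambda_0\in(0,\lambda_\ast)$ at which the resonance, the non-resonance, and $\mu_k'(\lambda_0)\neq0$ all hold, and this is the heart of the matter. Fixing $k$, the eigenvalue $\mu_k(\cdot)$ is real-analytic and strictly decreasing on $[0,\lambda_\ast)$ (because $\partial_\lambda u_\lambda<0$ along the branch makes the potential $-2\lambda(1+u_\lambda)^{-3}$ strictly decreasing in $\lambda$), running from $\mu_k(0)=k^2$ down to some limit $\ell_k<k^2$, hence is an analytic diffeomorphism onto $(\ell_k,k^2]$; this interval contains infinitely many rational squares $(p/q)^2$, and for each such square $\lambda_0:=\mu_k^{-1}((p/q)^2)$ automatically has $\mu_k'(\lambda_0)<0$, so transversality is free. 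One must then show that infinitely many of these $\lambda_0$ avoid the ``resonant set'' $\bigcup_{n\ge2,\,j\ge1}\{\lambda:\mu_j(\lambda)=n^2\mu_k(\lambda)\}$ (and the exceptional set that would make $0$ a spectral accumulation point of $L_0$); each function $\mu_j-n^2\mu_k$ is real-analytic and, because $\partial_\lambda A(0)=-2\,\mathrm{Id}$ gives $\mu_j'(0)=-2$ for every $j$, it is not identically zero (its value or its derivative at $\lambda=0$ is nonzero according as $j\neq nk$ or $j=nk$), so its zero set is discrete. Since the candidate parameters and the exceptional sets are all merely countable, closing this step rigorously requires quantitative control of the entire spectrum $\{\mu_j(\lambda)\}_{j\ge1}$ — in particular the bounds $\mu_j(\lambda)\in[j^2-C,j^2]$ with $C$ controlled once $u_{\lambda_\ast}$ is — up to the critical value $\lambda_\ast$, which is exactly what the rigorous estimate on the steady state $u_{\lambda_\ast}$ of Theorem~\ref{thm:saddle-node} is meant to supply. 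I expect this spectral bookkeeping to be the main obstacle; the remainder is the standard local-bifurcation machinery.
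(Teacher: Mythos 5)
Your local bifurcation machinery (Lyapunov--Schmidt with a contraction on the range, Crandall--Rabinowitz on the one-dimensional kernel, transversality via $\mu_k'(\lambda_0)<0$ coming from the monotonicity of $u_\lambda$ in $\lambda$, and the symmetries read off from the invariant subspaces) matches the paper's argument, and your observation that the pitchfork structure upgrades $\lambda=\lambda_0+\mathcal{O}(b)$ to $\mathcal{O}(b^2)$ is a correct, if unneeded, refinement. The genuine gap is exactly where you flag it: the production of infinitely many \emph{non-resonant} parameters. Your proposed route --- each resonance relation $\mu_j(\lambda)=n^2\mu_k(\lambda)$ defines a real-analytic function with discrete zero set, so one should be able to dodge the union --- does not close. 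The candidate set $\{\mu_k^{-1}((p/q)^2)\}$ and the resonant set are both countable and dense in the relevant interval, and a countable set minus a countable set can be empty; discreteness of each individual zero set gives no control over how the union interleaves with the candidates. Saying that ``quantitative control of the spectrum'' is needed does not supply that control, so the central claim of the theorem (infinitude of non-resonant $\lambda_0$) remains unproved in your write-up.

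The paper closes this step by a different mechanism, which does not attempt to avoid resonances at all. First, the two-sided bound $2\lambda\le k^2-\mu_k(\lambda)\le 9\lambda$ (whose upper constant $9$ rests on the validated estimate $u_{\lambda_*}(0)>-0.389$ from Theorem~\ref{thm:saddle-node}) shows that for a \emph{fixed} ratio $p/q$ the resonant set $\mathcal{N}(\lambda_0,p/q)$ is finite and contained in $\{pj+qk<9q^2\lambda_*+1\}$. Second, simplicity of the elliptic eigenvalues forces the temporal indices $j_m$ of the finitely many resonances $(j_m,k_m)$ to be pairwise distinct, so there is a unique maximal $j_0$. Replacing $p$ by $p_0=pj_0$ --- i.e., passing to the minimal period $2\pi q/p_0$ --- turns the same $\lambda_0$ into a non-resonant value with $\mathcal{N}(\lambda_0,p_0/q)=\{(1,k_0)\}$: any surviving resonance $(j,k)$ with $j>1$ would contradict maximality of $j_0$. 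Thus \emph{every} $\lambda_0$ at which some kernel is nontrivial is non-resonant for a suitable rational, and since $\mu_k$ is a strictly decreasing bijection onto $(\mu_k(\lambda_*),k^2)$ there are infinitely many such $\lambda_0$. This ``maximal harmonic / minimal period'' argument is the idea missing from your proposal; without it (or a genuine substitute), the proof is incomplete.
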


The existence of a branch of periodic solutions arising from the family of
steady states $u_{\lambda }$ is set as a branch of zeros for the functional
equation 
\begin{equation}
L(\lambda )u+g(u;\lambda )=0,\qquad L(\lambda )\bydef\left( p/q\right)
^{2}\partial _{t}^{2}+A(\lambda ),  \label{op}
\end{equation}%
where $A(\lambda )$ is given in \eqref{A} and $g=\mathcal{O}(u^{2})$ is an
analytic nonlinearity defined in a neighborhood of zero. Here $u\in
H_{sym}^{s}$ represents a perturbation from the steady state $u_{\lambda }$,
where $H_{sym}^{s}(S^{1}\times \lbrack -\pi /2,\pi /2];\mathbb{R})$ is the
Sobolev space of even $2\pi $-periodic functions $u(t,x)\ $satisfying
Dirichlet boundary conditions $u(t,\pm \pi /2)=0$. The spectrum of the
elliptic operator $A(\lambda )$ is not explicit, but it can be estimated by
applying the Courant-Fischer-Weyl minmax theorem (Chapter 39 in \cite%
{Rektorys_1977}). These estimates are essential to show that the linear operator 
$L(\lambda _{0})$ has a finite-dimensional kernel, which is non-trivial for
a dense set of values $\lambda _{0}$ in the interval $(0,\lambda _{\ast })$.
We implement a Lyapunov--Schmidt reduction for equation \eqref{op}. The
range equation is solved by the contracting mapping theorem and by proving
that the linear operator is invertible (but not compact) in the range. The
bifurcation equation is solved for a non-resonant value $\lambda _{0}\in
(0,\lambda _{\ast })$ using the Crandall-Rabinowitz theorem \cite%
{Crandall_1971}. The proof of the main theorem is finished by showing that
the number of non-resonant points $\lambda _{0}$ in $(0,\lambda _{\ast })$
is infinite. It is important to mention that the main theorem in \cite{HKiel}
and Remark \ref{R1} imply that the set of bifurcation points $\lambda _{0}$
is dense in $(0,\lambda _{\ast })$, but the bifurcations arising from these
possibly resonant bifurcation points do not satisfy the estimates or
symmetries of our main theorem.

The proof of Theorem \ref{thm:main} requires estimating rigorously the
minimum value of the steady state $u_{\lambda}$ at the critical value $%
\lambda =\lambda_{\ast}$. The proof of a precise estimate for the steady
state $u_{\lambda_*}$ is computer-assisted and is done independently using a
Newton-Kantorovich argument based on the radii polynomial approach (e.g. see 
\cite{MR2338393,LeMi16,notices_jb_jp} and the references therein). To obtain
a rigorous control on $u_{\lambda_{\ast}}$, we use Chebyshev polynomials
series expansions. Since the Chebyshev polynomials are naturally defined on
the interval $[-1,1]$, we rescale the space domain $[-\pi/2,\pi/2]$ to $%
[-1,1]$. Specifically, let $\tilde{U}(t,x;\lambda)$ be a solution of
equation \eqref{1}, then the scaled function $U(t,y;\lambda )\bydef\tilde{U}%
(\alpha t,x;\alpha^{-2}\lambda)$ with $\alpha=\pi/2$ and $x=\alpha y$, is a
solution of the equation%
\begin{equation}
U_{tt}-U_{yy}+\frac{\lambda}{\left( 1+U\right) ^{2}}=0,\qquad y\in
\lbrack-1,1],\qquad U(\pm1)=0.  \label{eq:MEMS}
\end{equation}
We denote the critical value of the scaled equation \eqref{eq:MEMS} by $%
\lambda^{\ast}\bydef\left( \pi/2\right) ^{-2}\lambda_{\ast}.$ The articles 
\cite{Fl3,Ka2} study the model in the domain $[-1/2,1/2]$ and get an exact
implicit formula for the critical value $4\lambda^{\ast}$, which later is
approximated numerically by the value $1.400016469$. Using the radii
polynomial approach, we prove the following result.

\begin{theorem}
\label{thm:saddle-node} The branch of steady states $u_{\lambda}$ of the
MEMS equation \eqref{eq:MEMS} undergoes a saddle-node bifurcation at the
steady state $u_{\lambda^{\ast}}$ satisfying the precise estimates 
\begin{equation*}
\left\vert u_{\lambda^{\ast}}-\bar{u}_{\lambda^{\ast}}\right\vert
_{C^{0}[-1,1]}\leq r=5.7\times10^{-12}, 
\end{equation*}
where $\bar{u}_{\lambda^{\ast}}$ is the numerical approximation portrayed in
Figure \ref{fig:saddle_node_steady_state}, and such that 
\begin{align*}
\lambda^{\ast} & \in0.350004119342744+[-r,r], \\
u_{\lambda^{\ast}}(0) & \in-0.388346718912783+[-r,r].
\end{align*}
\end{theorem}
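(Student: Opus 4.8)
The plan is to recast the problem of locating and enclosing the fold point of the steady-state branch as a single finite-dimensional-up-to-tail zero-finding problem and to resolve it with a Newton--Kantorovich argument of radii-polynomial type. A steady state of \eqref{eq:MEMS} solves $-u_{yy}+\lambda(1+u)^{-2}=0$ on $[-1,1]$ with $u(\pm1)=0$, and a saddle-node occurs precisely where the principal eigenvalue $\mu_1$ of $A(\lambda)$ vanishes. Accordingly I would work with the \emph{extended} system for the triple $(u,\lambda,\phi)$,
\begin{equation*}
-u_{yy}+\lambda(1+u)^{-2}=0,\qquad -\phi_{yy}-2\lambda(1+u)^{-3}\phi=0,\qquad \ell(\phi)-1=0,
\end{equation*}
where $\ell$ is a fixed bounded linear functional chosen so that $\ell(\bar\phi)\approx1$ for the numerical eigenfunction $\bar\phi$. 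At a simple fold this extended map has an \emph{invertible} Fr\'echet derivative (the standard desingularization used in rigorous numerical continuation), which is what makes the contraction argument applicable.

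To put this in a form amenable to validation, I would expand $u$, $\phi$ and the auxiliary unknown $w\bydef(1+u)^{-1}$ in Chebyshev series on $[-1,1]$, adding the algebraic relation $(1+u)w-1=0$ so that all nonlinearities become \emph{polynomial} ($(1+u)^{-2}=w^2$, $(1+u)^{-3}=w^3$); the Chebyshev coefficient sequences then live in a weighted $\ell^1$ space that is a Banach algebra under the cosine-convolution, so products are controlled. The two differential equations would be rewritten in integrated form using the explicit second-antiderivative operator that encodes the Dirichlet conditions, which turns the linear part of the resulting map $F$ into identity-plus-compact and guarantees that a numerically computed approximate inverse $\dagA$ of $DF(\bx)$ is a genuine bounded operator on the full sequence space. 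The unknown then consists of the scalar $\lambda$ together with the Chebyshev coefficients of $u$, $w$ and $\phi$.

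With a numerical approximate zero $\bx$ in hand, I would establish, using interval arithmetic, the standard bounds: $Y_0\ge\|\dagA F(\bx)\|$ (size of the defect), $Z_0\ge\|I-\dagA A\|$ (quality of the approximate inverse), $Z_1\ge\|\dagA(DF(\bx)-A)\|$ (tail of the linearization), and a bound $Z_2(r)$ on $\|\dagA(DF(c)-DF(\bx))\|$ for $c$ in the ball of radius $r$; because the nonlinearity is polynomial after the change of variable, $Z_2$ is an explicit polynomial in $r$. If the radii polynomial $p(r)=Z_2(r)r^2-(1-Z_0-Z_1)r+Y_0$ is negative at $r=5.7\times10^{-12}$ (which the computation confirms), the Banach fixed point theorem gives a unique zero $\hat x$ of $F$ in $\overline{B_r(\bx)}$. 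Since $|T_n|_{C^0[-1,1]}\le1$, the $\ell^1$ bound $r$ on the coefficient difference yields $|u_{\lambda^*}-\bar u_{\lambda^*}|_{C^0[-1,1]}\le r$, and evaluating the validated series at $y=0$ together with the $\lambda$-component of $\hat x$ gives the stated enclosures of $u_{\lambda^*}(0)$ and $\lambda^*$ (consistent with the known value $4\lambda^*\approx1.400016469$).

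It remains to certify that this zero is a genuine saddle-node. The validated $\phi$ can be checked — again by interval arithmetic on its Chebyshev polynomial — to have no sign change on $[-1,1]$; a one-signed solution of the Dirichlet eigenproblem $-\phi''=2\lambda(1+u)^{-3}\phi$ must be the ground state, so $\ker A(\lambda^*)$ is one-dimensional, and the Crandall--Rabinowitz nondegeneracy integrals $\langle\phi,(1+u_{\lambda^*})^{-2}\rangle$ and $\langle\phi,(1+u_{\lambda^*})^{-4}\phi^2\rangle$ are automatically nonzero because their integrands are of one sign; hence a fold. The main obstacle I anticipate is not any single estimate but the combination of (i) desingularizing the fold so that $DF(\bx)$ is well-conditioned enough to force $Z_0<1$, and (ii) producing sharp, rigorously verified tail and Lipschitz bounds for the nonlinear terms; the substitution $w=(1+u)^{-1}$ is essential here, since without it the rational nonlinearity $(1+u)^{-k}$ is far harder to bound rigorously on a ball of coefficient sequences. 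Everything else is a matter of carefully assembling the radii polynomial and evaluating it in interval arithmetic.
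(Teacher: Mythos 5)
Your proposal follows essentially the same route as the paper's proof: a bordered system desingularizing the fold (steady-state equation, kernel equation for the linearization, and a normalization functional), polynomialization of $(1+u)^{-2}$ through the reciprocal variable, Chebyshev expansion in a weighted $\ell^1$ Banach algebra, and a Newton--Kantorovich contraction certified via the $Y_0,Z_0,Z_1,Z_2$ radii-polynomial bounds evaluated in interval arithmetic, with the $C^0$ enclosure following from the coefficient-norm bound exactly as you describe. The only departures are implementation details -- the paper closes the reciprocal variable with the differential relation $u_3'=-u_2u_3^2$, $u_3(-1)=1$ and a first-order shooting formulation with unknown initial slope (so the linear part is derivative-dominant and inverted by a smoothing approximate inverse rather than being identity-plus-compact) -- and your added one-sign check of the eigenfunction is a sensible extra verification of the simple-fold character that the paper leaves implicit.
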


\begin{figure}[h!]
\centering
\includegraphics[width=0.45\textwidth]{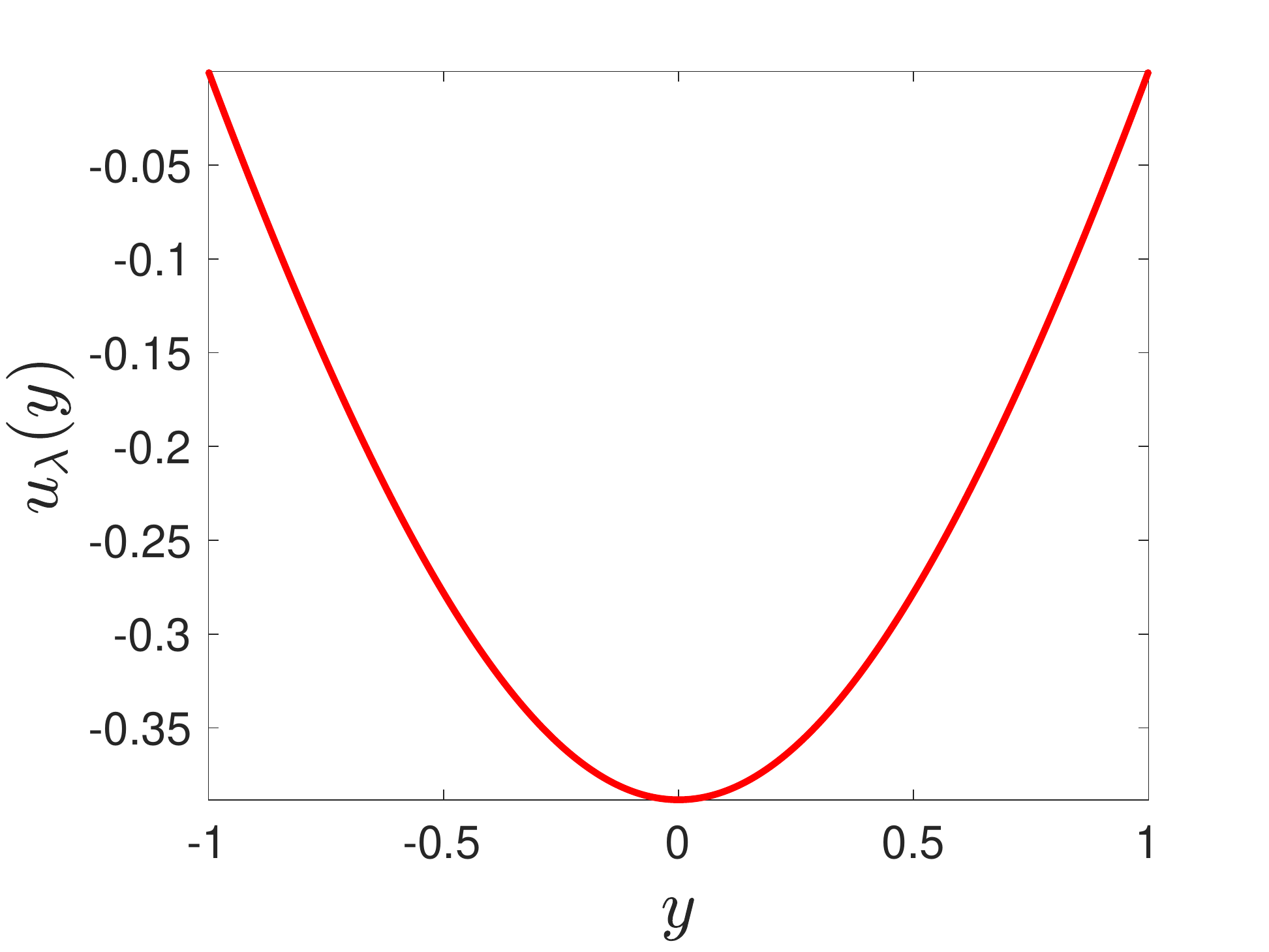} \vspace{-.3cm}
\caption{ The numerical approximation $\bar{u}_{\protect\lambda^{\ast}}(t)$
obtained in Section 3 for the steady state of equation \eqref{eq:MEMS} at
the critical value $\protect\lambda^{\ast}$. }
\label{fig:saddle_node_steady_state}
\end{figure}

\begin{figure}[h!]
\centering
\includegraphics[width=0.4\textwidth]{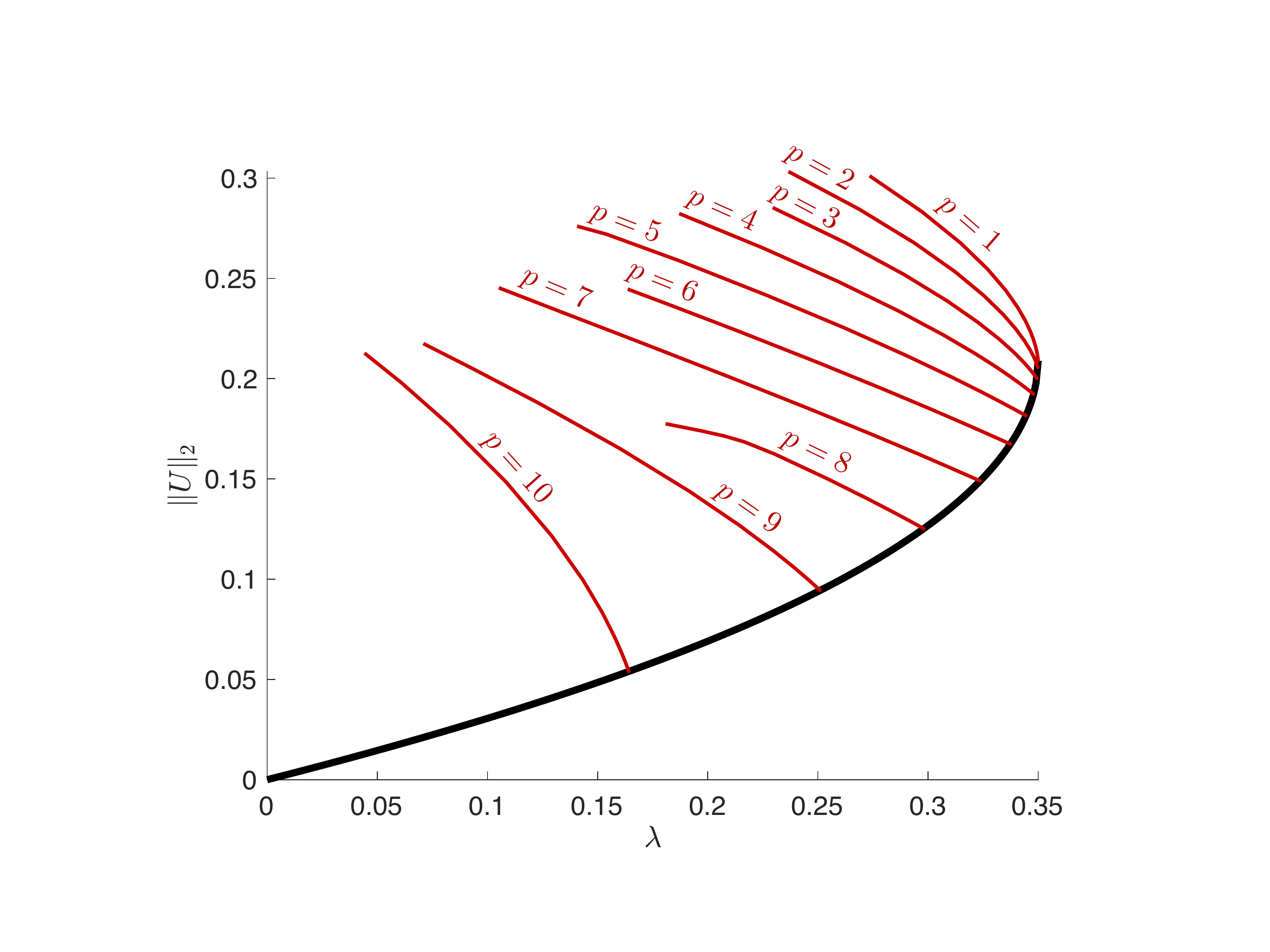}
\caption{Several branches of periodic solutions for $k=1$, $q=11$ and $p
\in\{1,\dots,10\}$.}
\label{fig:branches_po_k_1_q11}
\end{figure}
\begin{figure}[h!]
\centering
\includegraphics[width=0.3\textwidth]{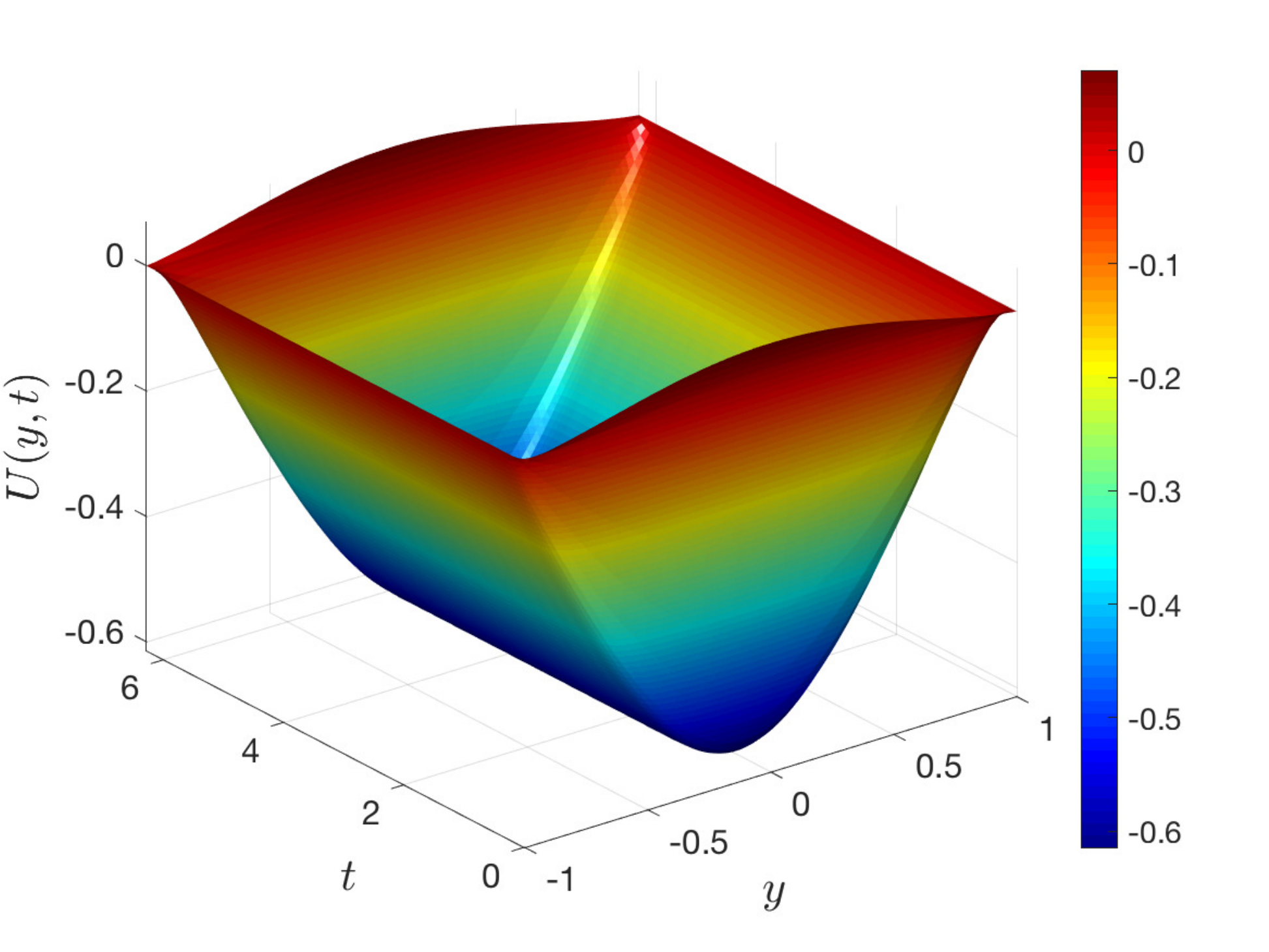} \includegraphics[width=0.3%
\textwidth]{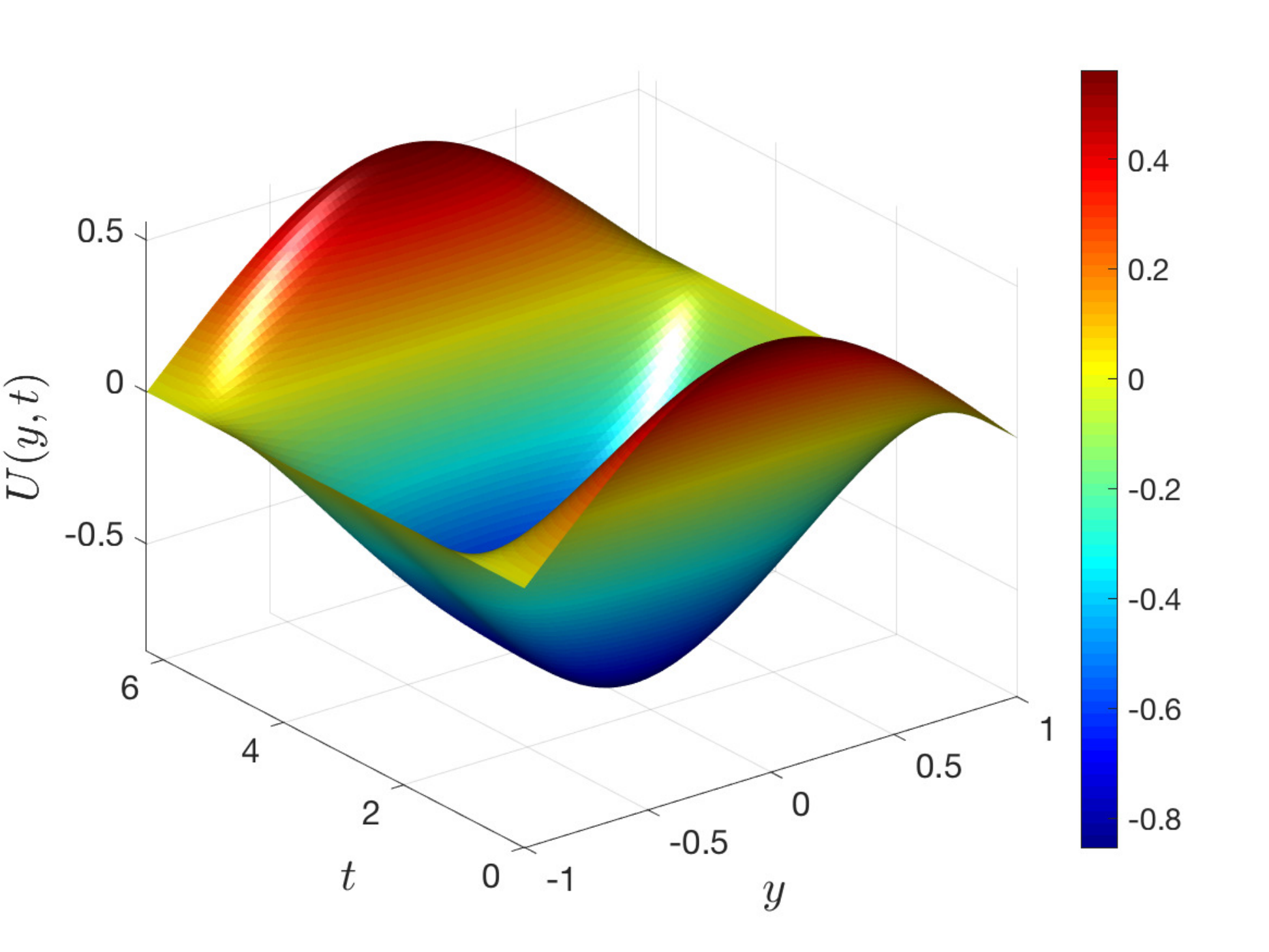} \includegraphics[width=0.3\textwidth]{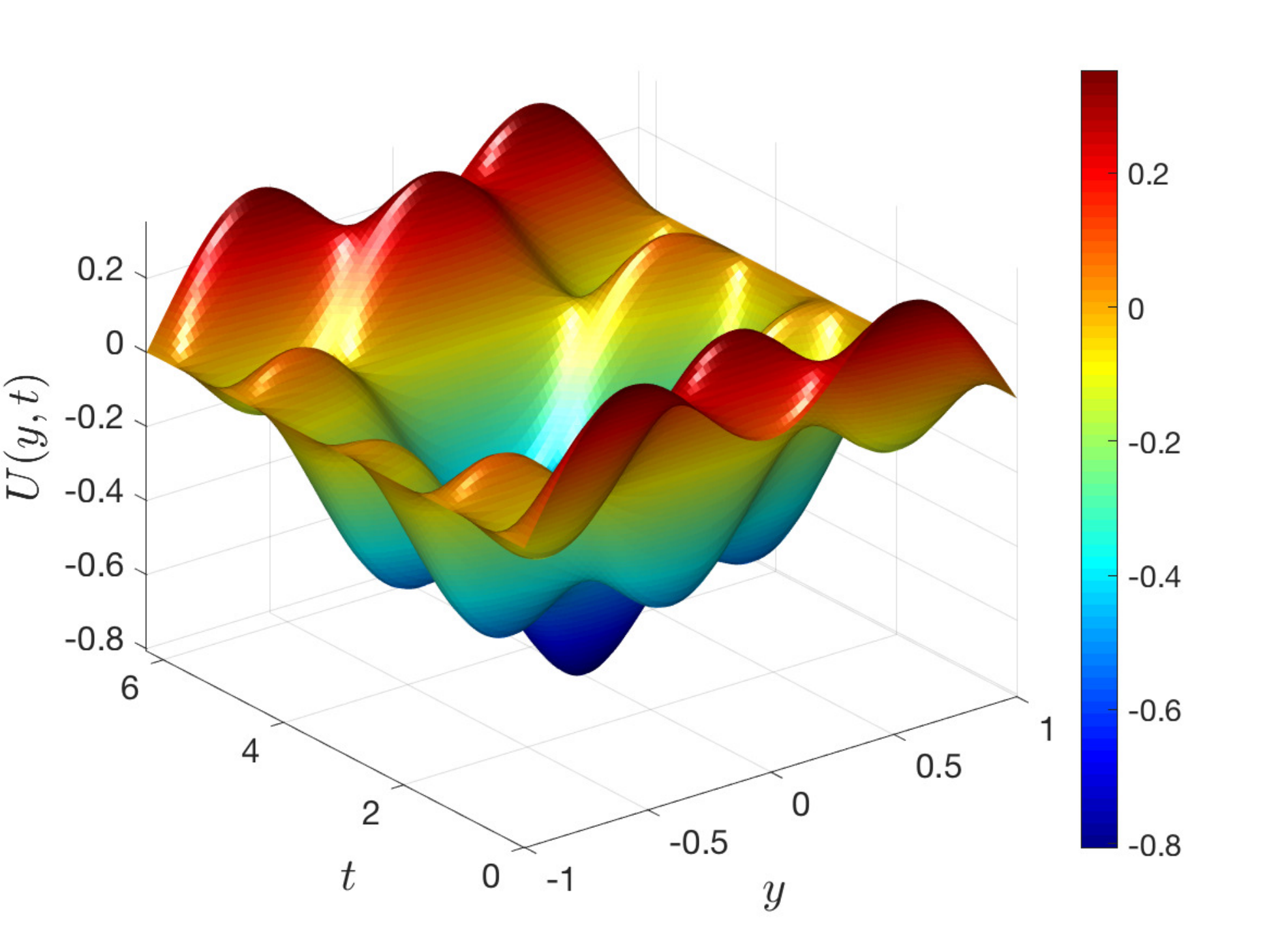}
\caption{Periodic solutions on the branches appearing on Figure~\protect\ref%
{fig:branches_po_k_1_q11} for $p=1$ at $\protect%
\lambda=0.273478006926454$ (\textbf{left}), $p=7$ at $\protect\lambda%
=0.105151105978289$ (\textbf{center}), and $p=8$ at $\protect\lambda%
=0.180819225410784$ (\textbf{right}).}
\label{fig:po_k1_q_11}
\end{figure}
\begin{figure}[h!]
\centering
\includegraphics[width=0.4\textwidth]{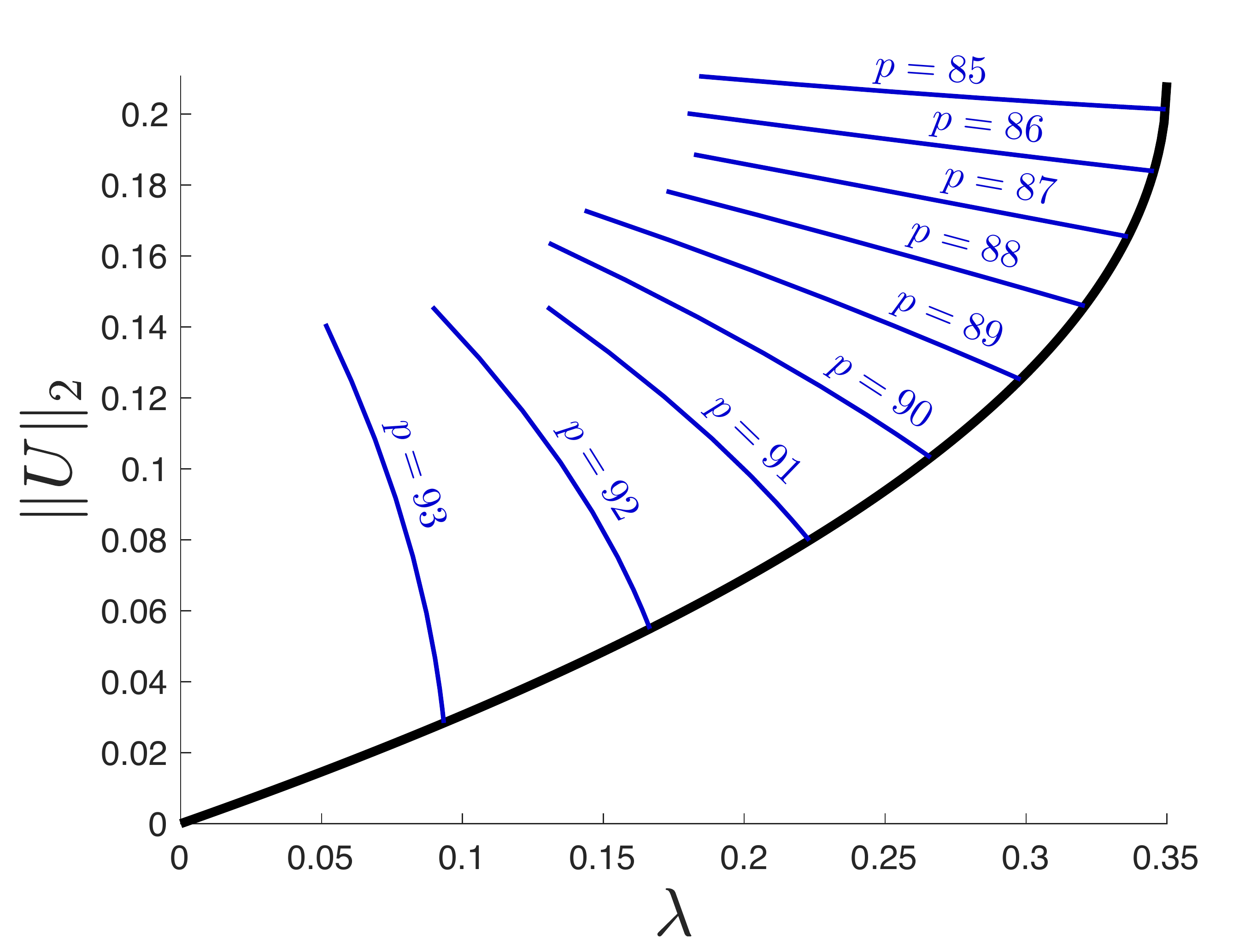}
\caption{Several branches of periodic orbits for $k=2$, $q=47$ and $p \in
\{85,\dots,93\}$.}
\label{fig:branches_po_k_2_q47}
\end{figure}
\begin{figure}[h!]
\centering
\includegraphics[width=0.3\textwidth]{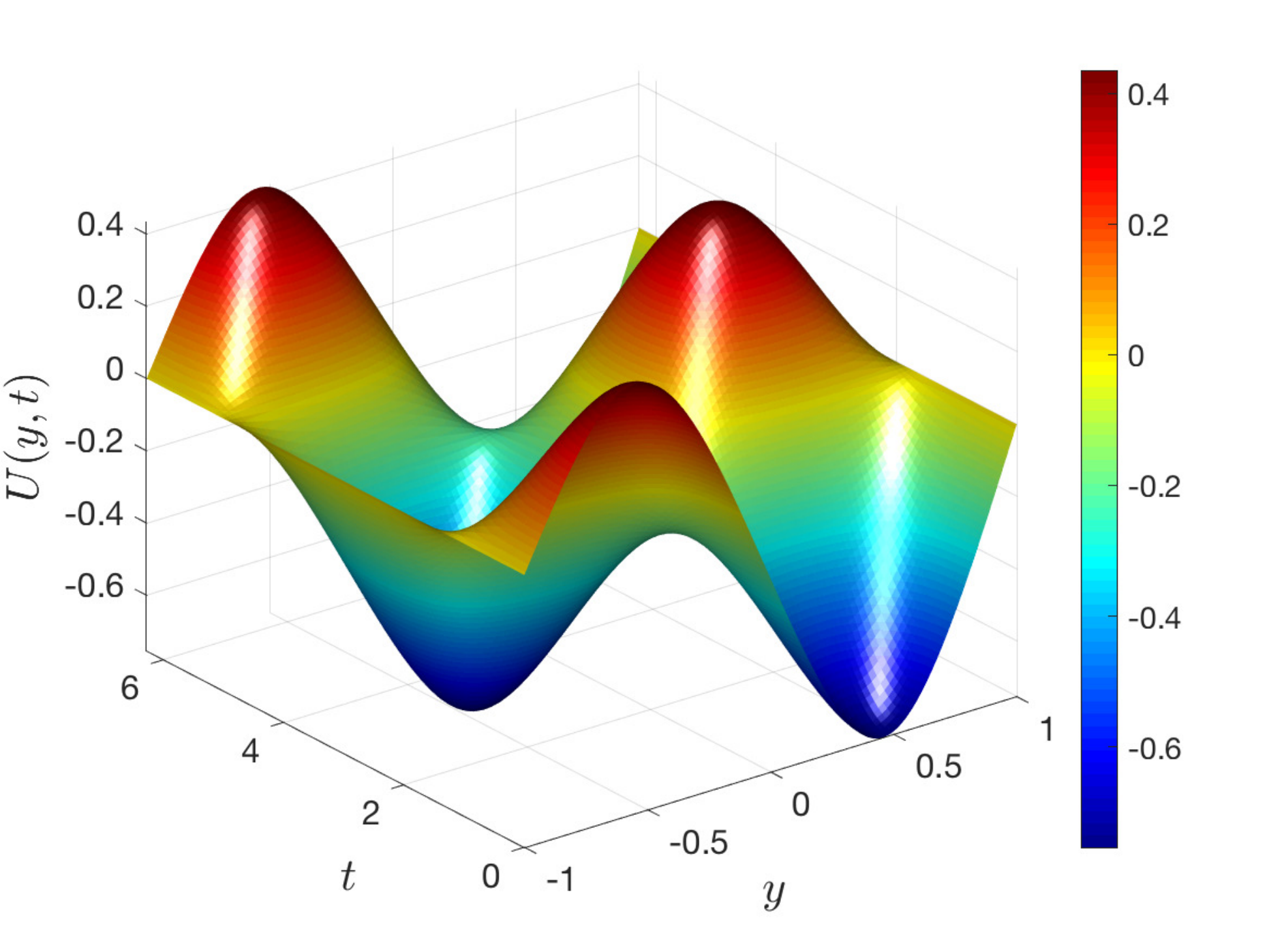} %
\includegraphics[width=0.3\textwidth]{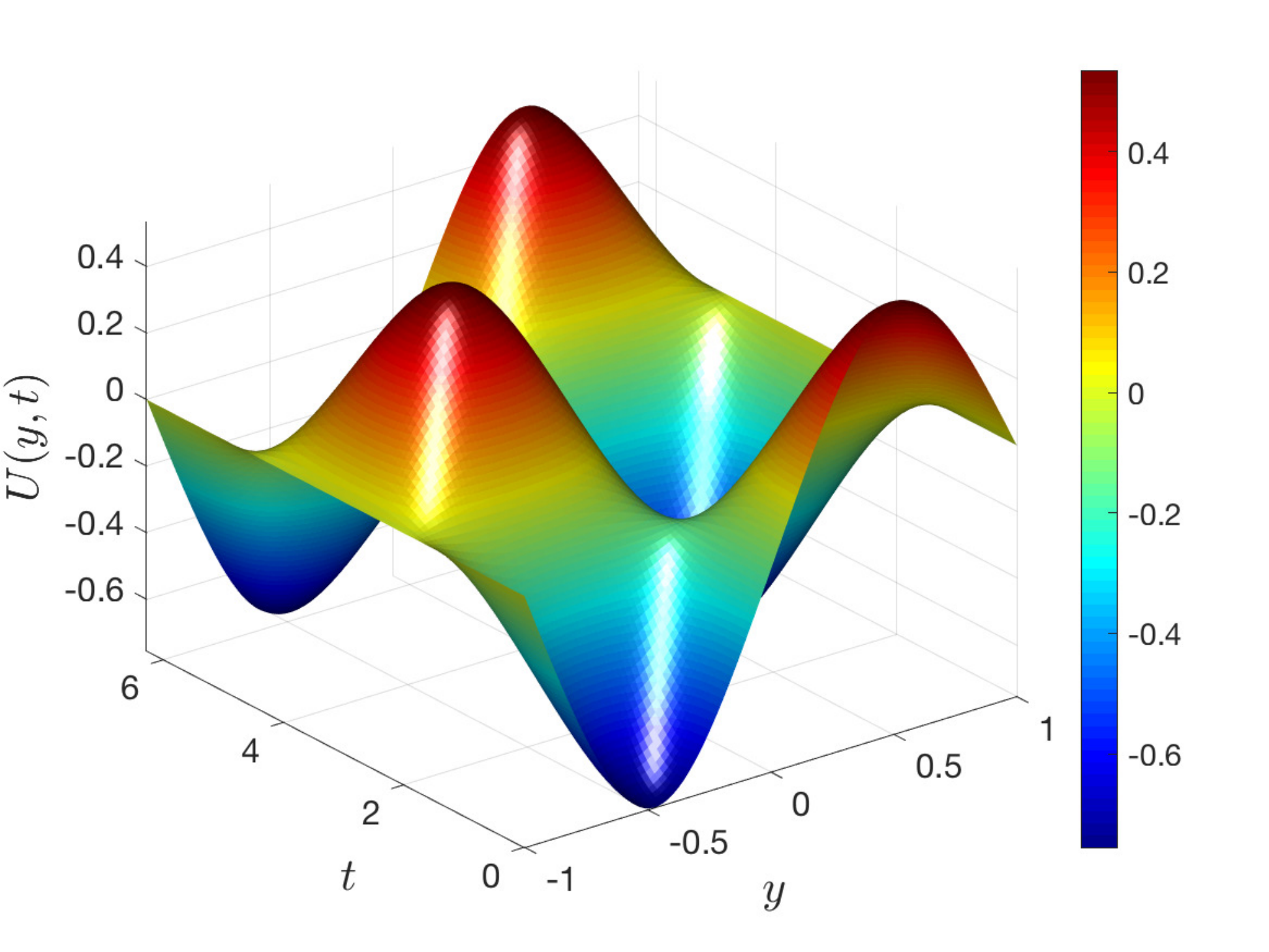}
\caption{The periodic solutions on the branches $p =88$ at $\protect\lambda%
=0.172417160845949$ (\textbf{left}), and $p=90$ at $\protect\lambda%
=0.130674749789634$ (\textbf{right}) of Figure~\protect\ref%
{fig:branches_po_k_2_q47}. Here, $k=2$ and $q=47$.}
\label{fig:po_k2_q_47}
\end{figure}

\pagebreak

The proof of this theorem is performed by solving the boundary value problem %
\eqref{eq:F=0_saddle-node} with polynomial nonlinearities. This setting
simplifies the estimates of the computer-assisted proof by considering the
Banach algebra property of spaces of Chebyshev sequences with geometric
decay (e.g. see \cite{LeMi16,notices_jb_jp}). Note that the computations of
the branch of steady states and the spectrum of the linear elliptic operator
are also performed using Chebyshev series and are set as polynomial boundary
value problems given in (\ref{eq:BVP}) and (\ref{eq:eig_BVP}), respectively.
For the periodic solutions we use the fact that $U(t,x)$ is a $2\pi q/p$%
-periodic solution of equation \eqref{1} if and only if the rescaled
function $U(t,y)$ is a $4q/p$-periodic solution of equation \eqref{eq:MEMS}.
The boundary value problem for periodic solutions of \eqref{eq:MEMS} is
given in \eqref{eq:BVP_PDE}. This setting represents a control problem where
the role of time is taken by the spatial variable $y$ and the control $%
\delta(t)$ is used to determine the initial conditions $U(t,-1)=0$ and $%
U_{y}(t,-1)=\delta(t)$ which guarantee that $U(t,1)=0$. Periodic solutions
are obtained numerically by expanding solutions with Fourier series in time
and with Chebyshev series in space. Examples of periodic solutions computed
numerically for $k=1$ and $q=11$ are portrayed in Figures \ref%
{fig:branches_po_k_1_q11} and \ref{fig:po_k1_q_11}.

It is important to remark that the Newton-Kantorovich argument based on the
radii polynomial approach can be used to validate rigorously the full branch
of steady states $u_{\lambda}$ or the eigenfunctions associated to the
linear elliptic operator $A(\lambda)$ (e.g. using the approach of \cite%
{MR2630003,MR2776917}). On the other hand, the validation of the branches of
periodic solutions requires further investigations due to the lack of
compactness of the inverse of the linear hyperbolic operator $L(\lambda)$.
Indeed, while computer-assisted proofs have been used to validate the
existence of periodic solutions in a nonlinear wave equations in \cite%
{ArioliKoch} and in a nonlinear ill-posed Boussinesq equation (modelling
shallow water waves) in \cite{MR3749257}, in our case the inverse of the
linear hyperbolic operator $L(\lambda)$ lacks the necessary compactness to
apply a similar approach.

The rest of the paper proceeds as follows. In Section~\ref{sec:existence},
we first prove Theorem~\ref{thm:main} by combining a Lyapunov-Schmidt
reduction and the Crandall-Rabinowitz theorem. In Section~\ref%
{sec:numerical_computations}, we compute numerically the steady states, the
spectrum of the elliptic operator and the periodic solutions of equation %
\eqref{eq:MEMS}. Finally, in Section~\ref{sec:cap} we present a
computer-assisted proof of Theorem \ref{thm:saddle-node} that allows
obtaining rigorous control over the steady state $u_{\lambda^{\ast}}$ at the
critical parameter value. 

\section{Existence of an infinite number of branches of periodic solutions}

\label{sec:existence}

\subsection{Properties of the linear elliptic operator}

In this section we analyze the properties of the steady states $u_{\lambda}$
of equation (\ref{1}) and properties of the eigenfunctions of the linear
elliptic operator $A(\lambda)$ defined in (\ref{A}).

\begin{proposition}
The steady state $u_{\lambda}(x)$ is even in $x$ and increasing in the
interval $x\in\lbrack0,\pi/2]$. Moreover, the steady state $u_{\lambda}$ is
decreasing as a function of $\lambda$.
\end{proposition}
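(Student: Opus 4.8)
The plan is to establish three separate monotonicity statements about the steady state $u_\lambda(x)$, which solves the ODE boundary value problem $-u'' + \lambda/(1+u)^2 = 0$ on $[-\pi/2,\pi/2]$ with $u(\pm\pi/2)=0$. Observe first that the steady state is negative in the interior: since $\lambda>0$ the function $u$ is strictly convex ($u'' = \lambda/(1+u)^2 > 0$ wherever $1+u>0$), so $u$ lies below the chord joining its endpoint values, i.e. $u\le 0$, with strict inequality in the interior. Convexity together with $u(-\pi/2)=u(\pi/2)=0$ forces $u$ to attain its minimum at a single interior point and to be monotone on either side of it.

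Next I would prove evenness via a uniqueness-and-symmetry argument. The reflected function $\tilde u(x)\bydef u_\lambda(-x)$ satisfies the same ODE (the equation has no explicit $x$-dependence) and the same boundary conditions. If the relevant BVP has a unique solution on the stable branch — which is exactly what Theorem~\ref{thm:saddle-node} and the positivity of $\mu_1(\lambda)$ from Theorem 4.2 in \cite{Go1} guarantee, since the linearization $A(\lambda)$ is invertible so the implicit function theorem gives local uniqueness, and the stable branch is isolated — then $\tilde u = u_\lambda$, hence $u_\lambda$ is even. Evenness plus the single-interior-minimum property from the convexity argument pins the minimum at $x=0$, so $u_\lambda$ is decreasing on $[-\pi/2,0]$ and increasing on $[0,\pi/2]$, which is the second claim.

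For monotonicity in $\lambda$, I would differentiate the equation with respect to the parameter. Writing $w\bydef \partial_\lambda u_\lambda$, differentiation of $-u'' + \lambda(1+u)^{-2}=0$ gives
\begin{equation*}
-w'' - \frac{2\lambda}{(1+u_\lambda)^3} w + \frac{1}{(1+u_\lambda)^2} = 0,\qquad w(\pm\pi/2)=0,
\end{equation*}
i.e. $A(\lambda) w = -(1+u_\lambda)^{-2}$. Since $A(\lambda)$ is self-adjoint and positive definite on $[0,\lambda_\ast)$, it is in particular invertible with positive inverse: by the maximum principle for the operator $-\partial_x^2 - 2\lambda(1+u_\lambda)^{-3}$ (whose zeroth-order coefficient is non-positive? — no, it is negative, so one must instead invoke positivity of $A(\lambda)$ directly, e.g. via the positivity of the Green's function which follows from $\mu_1(\lambda)>0$), the solution of $A(\lambda) w = h$ with $h<0$ is itself strictly negative in the interior. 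Since the right-hand side $-(1+u_\lambda)^{-2}$ is strictly negative, $w = \partial_\lambda u_\lambda < 0$, establishing the third claim.

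The main obstacle I anticipate is the sign argument for $\partial_\lambda u_\lambda$: unlike a textbook maximum-principle situation, the operator $A(\lambda)$ has a \emph{negative} zeroth-order term, so the ordinary maximum principle does not directly apply. The clean way around this is to use that $A(\lambda)$ is positive definite (first eigenvalue $\mu_1(\lambda)>0$) and that its inverse is positivity-preserving; the latter can be seen by expanding the right-hand side in the eigenbasis $\{v_k\}$ and checking the boundary behaviour, or more robustly by a continuity/deformation argument from $\lambda=0$ (where $u_0\equiv 0$ and the claim is immediate) combined with the fact that $\partial_\lambda u_\lambda$ can never develop an interior zero on $(0,\lambda_\ast)$ without $A(\lambda)$ failing to be invertible. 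A secondary technical point is justifying differentiability of $u_\lambda$ in both $x$ (elliptic regularity, trivial for an ODE) and $\lambda$ (implicit function theorem, valid precisely because $A(\lambda)$ is invertible for $\lambda<\lambda_\ast$).
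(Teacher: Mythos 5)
Your proposal is essentially correct, and it is considerably more detailed than what the paper does: the paper's entire proof is a one-line appeal to the maximum principle with citations to \cite{Fl3} and \cite{Go1}. Your convexity argument for $u\le 0$ and for the single interior minimum, and your parameter-differentiation argument reducing the third claim to the sign of $w=\partial_\lambda u_\lambda$ with $A(\lambda)w=-(1+u_\lambda)^{-2}$, are exactly the standard route those references take. Two places deserve tightening. For evenness, local uniqueness from the implicit function theorem does not by itself force the reflected solution $\tilde u$ to coincide with $u_\lambda$ (a priori $\tilde u$ could be a different, distant solution of the same BVP); you either need the open-closed continuation argument along the whole branch from $\lambda=0$ (where $u_0\equiv 0$ is even), or, more elementarily, the ODE reflection argument: at the interior minimum $x_0$ one has $u'(x_0)=0$, so $x\mapsto u(x_0+s)$ and $x\mapsto u(x_0-s)$ solve the same second-order IVP and coincide, whence $u$ is symmetric about $x_0$ and the boundary conditions force $x_0=0$. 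For the sign of $w$, your instinct that the ordinary maximum principle fails (negative zeroth-order coefficient) is right, but the eigenbasis expansion you mention does \emph{not} yield positivity of $A(\lambda)^{-1}$; what does work is either the generalized maximum principle for Schr\"odinger operators with $\mu_1>0$, or — more cleanly — the deformation argument you sketch, which closes immediately at a first touching point: if $w\le 0$ and $w(x_1)=0$ at an interior maximum, then $w''(x_1)\le 0$, while the equation gives $w''(x_1)=(1+u_\lambda(x_1))^{-2}>0$ because the troublesome zeroth-order term is multiplied by $w(x_1)=0$. With those two repairs the argument is complete and self-contained.
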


\begin{proof}
This follows from the maximum principle, for instance see the results in 
\cite{Fl3} and \cite{Go1}.
\end{proof}

Now we present properties of the eigenvalues and eigenfunctions of the
linear elliptic operator $A(\lambda)$. By Sturm--Liouville theory the
eigenvalues $\mu_{k}(\lambda)$ are simple for all $k$ and $\lambda
\in\lbrack0,\lambda_{\ast}]$. Furthermore, we can order the eigenvalues of $%
A(\lambda)$ by $0\leq\mu_{1}(\lambda)<\mu_{2}(\lambda)<\dots$ such that the
eigenfunction $v_{k}(x;\lambda)$ corresponding to eigenvalue $%
\mu_{k}(\lambda)$ has $k-1$ simple zeros in $(-\pi/2,\pi/2)$.

\begin{remark}
The eigenvalues $\mu_{k}(\lambda)$ for $k=1,2,3$ are computed numerically
for the rescaled problem with $\lambda^{\ast}=\left( \pi/2\right)
^{-2}\lambda_{\ast}$ in Section 3 (see Figure \ref{fig:eigs_continuation}).
Note that, while we do not perform this in the present paper, our numerical
computations of the eigenvalues and eigenfunctions could be validated
rigorously using a Newton-Kantorovich argument based on the radii polynomial
approach similarly to the proof of Theorem \ref{thm:saddle-node} in Section~%
\ref{sec:cap}.
\end{remark}

\begin{proposition}
For $\lambda\in\lbrack0,\lambda_{\ast}]$ we have the following estimate for
the eigenvalues $\mu_{k}(\lambda)$, 
\begin{equation}
2\lambda\leq k^{2}-\mu_{k}(\lambda)\leq9\lambda~.  \label{esei}
\end{equation}
\end{proposition}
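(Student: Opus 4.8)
The plan is to use the Courant--Fischer--Weyl minmax characterization of the eigenvalues of the self-adjoint operator $A(\lambda)$ and compare it with the explicit eigenvalues of $-\partial_x^2$ on $H_0^2([-\pi/2,\pi/2])$, which are exactly $k^2$ with eigenfunctions $\cos(kx)$ or $\sin(kx)$. Writing $A(\lambda) = -\partial_x^2 - W(x)$ with the potential $W(x) \bydef 2\lambda/(1+u_\lambda(x))^3 > 0$, the quadratic form of $A(\lambda)$ is $\langle A(\lambda)u,u\rangle = \|u_x\|_{L^2}^2 - \int W u^2\,dx$, so the perturbation from $-\partial_x^2$ is the multiplication operator $-W$, which is bounded. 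The key observation driving the factor $2\lambda$ on the lower end and $9\lambda$ on the upper end of \eqref{esei} is that we need two-sided bounds $2\lambda \le W(x) \le 9\lambda$ on $[-\pi/2,\pi/2]$, which come from two-sided bounds $0 \ge u_\lambda(x) \ge u_*$ on the steady state, where $u_* \bydef \min u_\lambda$. Indeed, since $u_\lambda \le 0$ we get $1+u_\lambda \le 1$ hence $W \ge 2\lambda$; and if one has a uniform lower bound $u_\lambda \ge c$ for all $\lambda \in [0,\lambda_*]$ with $(1+c)^3 \ge 2/9$, i.e. $c \ge (2/9)^{1/3}-1 \approx -0.392$, then $W \le 9\lambda$. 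Theorem~\ref{thm:saddle-node} supplies exactly such a bound: $u_{\lambda^*}(0) \approx -0.388$ and, since $u_\lambda$ is decreasing in $\lambda$ and the minimum is at $x=0$, one has $u_\lambda(x) \ge u_{\lambda_*}(0) > (2/9)^{1/3}-1$ for all $x$ and all $\lambda \in [0,\lambda_*]$ (after undoing the rescaling). This is the step where the ``delicate estimate of the steady state at $\lambda_*$'' advertised in the introduction is actually consumed.

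The mechanism is then the standard monotonicity of the minmax quotient: for each $k$,
\begin{equation*}
\mu_k(\lambda) = \min_{\substack{V \subset H_0^2 \\ \dim V = k}} \ \max_{0 \ne u \in V} \frac{\|u_x\|_{L^2}^2 - \int_{-\pi/2}^{\pi/2} W u^2\,dx}{\|u\|_{L^2}^2}.
\end{equation*}
Using the bound $2\lambda \|u\|^2 \le \int W u^2 \le 9\lambda\|u\|^2$ inside the Rayleigh quotient, every test subspace gives
\begin{equation*}
\frac{\|u_x\|^2}{\|u\|^2} - 9\lambda \ \le\ \frac{\|u_x\|^2 - \int W u^2}{\|u\|^2} \ \le\ \frac{\|u_x\|^2}{\|u\|^2} - 2\lambda,
\end{equation*}
so taking $\min$ over $\dim V = k$ subspaces and $\max$ over $u \in V$ commutes with subtracting a constant, yielding $\nu_k - 9\lambda \le \mu_k(\lambda) \le \nu_k - 2\lambda$, where $\nu_k = k^2$ is the $k$-th eigenvalue of $-\partial_x^2$ on $H_0^2$. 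Rearranging gives $2\lambda \le k^2 - \mu_k(\lambda) \le 9\lambda$, which is \eqref{esei}.

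The only real obstacle is the uniform two-sided control of the potential $W$, i.e. establishing that the minimum of $u_\lambda$ stays strictly above the threshold $(2/9)^{1/3}-1$ on the whole parameter interval $[0,\lambda_*]$. I would argue this in two steps: first, the previous Proposition (monotonicity) gives that $u_\lambda(x)$ decreases in $\lambda$ and attains its minimum over $x$ at $x=0$, so it suffices to bound $u_{\lambda_*}(0)$ from below; second, Theorem~\ref{thm:saddle-node} gives $u_{\lambda^*}(0) \in -0.388346718912783 + [-r,r]$ for the rescaled problem, and since the rescaling $u_\lambda(x) = \tilde u_{(\pi/2)^{-2}\lambda}(2x/\pi)$ preserves pointwise values of $u$, we get $u_{\lambda_*}(0) = u_{\lambda^*}(0) > -0.39 > (2/9)^{1/3}-1$, hence $(1+u_\lambda(x))^3 \ge (1-0.39)^3 = 0.61^3 > 2/9$, which is the needed inequality $W \le 9\lambda$. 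Everything else is the routine minmax comparison sketched above; the factor $9$ is not sharp but is a clean consequence of the numerical value $-0.388\ldots$, with a comfortable margin.
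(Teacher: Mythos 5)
Your proposal is correct and follows essentially the same route as the paper: write $A(\lambda)=-\partial_x^2-W$ with $W=2\lambda(1+u_\lambda)^{-3}$, bound $2\lambda\le W\le 9\lambda$ using $u_\lambda\le 0$ on one side and the rigorous estimate $u_{\lambda_*}(0)>-0.3884$ from Theorem~\ref{thm:saddle-node} on the other (the paper phrases this as $(1+u_{\lambda_*}(0))^{-3}<4.5$), and then apply the Courant--Fischer--Weyl comparison against the explicit eigenvalues $k^2$ of $-\partial_x^2$ on $H_0^2([-\pi/2,\pi/2])$. Your added care about where the minimum of $u_\lambda$ sits and about the rescaling between $u_{\lambda_*}$ and $u_{\lambda^*}$ only makes explicit what the paper leaves implicit.
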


\begin{proof}
Let $A,B:H_{0}^{2}\subset L^{2}\rightarrow L^{2}$ be the operators 
\begin{equation*}
A=-\partial_{x}^{2}-B,\qquad B\bydef\frac{2\lambda}{\left( 1+u_{\lambda
}(x)\right) ^{3}}. 
\end{equation*}
Then we have that $m_{\lambda}\left\vert u\right\vert
_{L^{2}}^{2}\leq\left\langle Bu,u\right\rangle \leq M_{\lambda}\left\vert
u\right\vert _{L^{2}}^{2}$, where%
\begin{equation*}
M_{\lambda}\bydef2\lambda\sup_{x\in\lbrack-\pi/2,\pi/2]}\frac{1}{\left(
1+u_{\lambda}(x)\right) ^{3}}\quad\text{and}\quad m_{\lambda}\bydef2\lambda
\inf_{x\in\lbrack-\pi/2,\pi/2]}\frac{1}{\left( 1+u_{\lambda}(x)\right) ^{3}}%
\text{.} 
\end{equation*}
The operator $-\partial_{x}^{2}:H_{0}^{2}\subset L^{2}\rightarrow L^{2}$ has
eigenvalues $k^{2}$ and eigenfunctions $\cos kx$ for $k\in2\mathbb{N}+1$ and 
$\sin kx$ for $k\in2\mathbb{N}^{+}$. Since $-\partial_{x}^{2}=A+B$, the
Courant-Fischer-Weyl theorem implies that%
\begin{equation*}
\mu_{k}(\lambda)+m_{\lambda}\leq k^{2}\leq\mu_{k}(\lambda)+M_{\lambda}\text{.%
} 
\end{equation*}
By the properties of $u_{\lambda}(x)$, we have that $m_{\lambda}=2\lambda$
and $M_{\lambda}=2\lambda\left( 1+u_{\lambda}(0)\right) ^{-3}.$ Since $%
u_{\lambda_{\ast}}(0)$ is estimated in Theorem \ref{thm:saddle-node} with $%
u_{\lambda_{\ast}}(0)>-0.38834671892$, then 
\begin{equation*}
\left( 1+u_{\lambda_{\ast}}(0)\right) ^{-3}<4.5 
\end{equation*}
and 
\begin{equation*}
M_{\lambda}=2\lambda\left( 1+u_{\lambda}(0)\right) ^{-3}\leq2\lambda\left(
1+u_{\lambda_{\ast}}(0)\right) ^{-3}\leq9\lambda~. 
\end{equation*}
\end{proof}

\begin{proposition}
\label{prop:B} The eigenvalues $\mu_{k}(\lambda)$ are monotonically
decreasing for $\lambda\in\lbrack0,\lambda_{\ast}]$.
\end{proposition}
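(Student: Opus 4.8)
The plan is to compare the elliptic operators $A(\lambda)$ at two parameter values via the Courant--Fischer--Weyl minmax theorem, just as in the proof of \eqref{esei}. Fix $0\le\lambda_1<\lambda_2\le\lambda_\ast$ and write $A(\lambda)=-\partial_x^2-B(\lambda)$ with $B(\lambda)\bydef2\lambda(1+u_\lambda(x))^{-3}$, a bounded multiplication operator because there is no quenching on the stable branch (indeed $u_\lambda\ge u_{\lambda_\ast}(0)>-1$ by Theorem~\ref{thm:saddle-node} together with the monotonicity of $u_\lambda$ in $\lambda$). The key point is that $B(\lambda)$ is pointwise nondecreasing in $\lambda$: since $u_\lambda$ is nonincreasing in $\lambda$ and $u_\lambda\le0$ on $[-\pi/2,\pi/2]$ (it is even, increasing on $[0,\pi/2]$, and vanishes at $\pm\pi/2$), we get $(1+u_{\lambda_2}(x))^{-3}\ge(1+u_{\lambda_1}(x))^{-3}\ge1$ for every $x$, and hence
\[
B(\lambda_2)(x)-B(\lambda_1)(x)\ \ge\ 2(\lambda_2-\lambda_1)\,(1+u_{\lambda_1}(x))^{-3}\ \ge\ 2(\lambda_2-\lambda_1)\ >\ 0 .
\]

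It follows that for every nonzero $u$ in the form domain $H_0^1$ the Rayleigh quotients obey $R_{\lambda_2}(u)\le R_{\lambda_1}(u)-2(\lambda_2-\lambda_1)$, where $R_\lambda(u)=\langle A(\lambda)u,u\rangle/\langle u,u\rangle$. Taking the maximum over a $k$-dimensional subspace and then the minimum over all such subspaces, the Courant--Fischer--Weyl theorem (Chapter 39 in \cite{Rektorys_1977}) gives $\mu_k(\lambda_2)\le\mu_k(\lambda_1)-2(\lambda_2-\lambda_1)<\mu_k(\lambda_1)$ for every $k\in\mathbb{N}$, which is the desired strict monotonicity; note that strictness comes purely from $\lambda_2-\lambda_1>0$, so only the already-established monotonicity of $u_\lambda$ is used.

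An equivalent route, via first-order perturbation theory, is also available: for $\lambda\in[0,\lambda_\ast)$ the branch $u_\lambda$ is non-degenerate ($A(\lambda)$ is invertible because $\mu_1(\lambda)>0$), so $u_\lambda$ and each simple eigenpair $(\mu_k(\lambda),v_k(\cdot;\lambda))$ depend smoothly on $\lambda$; differentiating $A(\lambda)v_k=\mu_kv_k$ and pairing with the $L^2$-normalized $v_k$ yields $\mu_k'(\lambda)=-\langle \dot B(\lambda)v_k,v_k\rangle$ with $\dot B(\lambda)=2(1+u_\lambda)^{-3}-6\lambda(1+u_\lambda)^{-4}\partial_\lambda u_\lambda>0$ (both summands nonnegative, the first strictly positive), whence $\mu_k'(\lambda)<0$, and the endpoint $\lambda=\lambda_\ast$ follows by continuity. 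The only genuinely delicate points are the absence of quenching on $[0,\lambda_\ast]$ (needed so that $B(\lambda)$ is well-defined and bounded) and, for the perturbation variant, the smooth dependence of the branch and the eigendata on $\lambda$ (implicit function theorem together with Kato's analytic perturbation theory for simple eigenvalues). I expect the form-comparison argument to be the cleanest, since it requires nothing beyond the monotonicity of $u_\lambda$ already proved and the minmax theorem already in use.
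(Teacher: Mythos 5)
Your form-comparison argument is essentially the paper's own proof: the paper also writes the difference $C=B(\lambda_2)-B(\lambda_1)$ of the potential terms, uses the monotonicity of $u_\lambda$ in $\lambda$ to see that $C$ is a strictly positive multiplication operator, and applies the Courant--Fischer--Weyl minmax theorem to $A(\lambda_1)=A(\lambda_2)+C$ to conclude $\mu_k(\lambda_1)>\mu_k(\lambda_2)$. Your version is correct and even slightly sharper, since you exhibit the explicit lower bound $2(\lambda_2-\lambda_1)$ for the gap (using $u_\lambda\le 0$), whereas the paper only invokes an abstract positive constant $c$; the perturbation-theoretic variant you sketch is a valid alternative but is not needed.
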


\begin{proof}
Let $\lambda_{1}<\lambda_{2}$. Since $u_{\lambda}$ is decreasing in $\lambda$%
, then 
\begin{equation*}
\frac{\lambda_{1}}{\left( 1+u_{\lambda_{1}}(x)\right) ^{3}}<\frac {%
\lambda_{2}}{\left( 1+u_{\lambda_{2}}(x)\right) ^{3}}\text{.} 
\end{equation*}
Thus the operator%
\begin{equation*}
C\bydef\left( \frac{2\lambda_{2}}{\left( 1+u_{\lambda_{2}}(x)\right) ^{3}}-%
\frac{2\lambda_{1}}{\left( 1+u_{\lambda_{1}}(x)\right) ^{3}}\right)
:H_{0}^{2}\subset L^{2}\rightarrow L^{2}, 
\end{equation*}
is positive definite, that is there exists $c\geq0$ such that $\left\langle
Cu,u\right\rangle _{L^{2}}\geq c\left\langle u,u\right\rangle _{L^{2}}$ for
all $u\in L^{2}$. Applying the Courant-Fischer-Weyl theorem to the operator $%
A(\lambda_{1})=A(\lambda_{2})+C:H_{0}^{2}\rightarrow L^{2}$ we obtain that 
\begin{equation*}
\mu_{k}(\lambda_{1})\geq\mu_{k}(\lambda_{2})+c>\mu_{k}(\lambda_{2})\text{.} 
\end{equation*}
\end{proof}

\begin{proposition}
The eigenfunction $v_{k}(x;\lambda)$ corresponding to the eigenvalue $\mu
_{k}(\lambda)$ of the linear elliptic operator $A(\lambda)$ satisfies 
\begin{equation*}
v_{k}(x;\lambda)=(-1)^{k+1}v_{k}(-x;\lambda)\text{.} 
\end{equation*}
\end{proposition}

\begin{proof}
Since $u_{\lambda}(x)$ is even, then $v_{k}(-x;\lambda)$ is also an
eigenfunction of $A(\lambda)$ for the eigenvalue $\mu_{k}(\lambda)$. Since
the eigenvalues are simple, then the eigenfunctions are unique up to a
scalar multiple, that is $v_{k}(x;\lambda)=\pm v_{k}(-x;\lambda)$. But $%
v_{k}(x;\lambda)$ has $k-1$ simple zeros in $(-\pi/2,\pi/2)$, then $%
v_{k}(x;\lambda)=v_{k}(-x;\lambda)$ if $k$ is odd and $v_{k}(x;%
\lambda)=-v_{k}(-x;\lambda)$ if $k$ is even.
\end{proof}

In particular, we have that 
\begin{equation*}
v_{k}(x;0)=\left\{ 
\begin{aligned}
\cos(kx), \quad  & k\in2\mathbb{N}+1\\
\sin(kx), \quad & k\in2\mathbb{N}^{+}.%
\end{aligned}
\right. 
\end{equation*}

\subsection{Properties of the linear hyperbolic operator}

In this section we analyze the properties of the spectrum for the linear
hyperbolic operator 
\begin{equation}
L(\lambda,\nu)u\bydef\left( p/q\right) ^{2}u_{tt}+A(\lambda)u.  \label{L}
\end{equation}

We define $C_{sym}^{2}$ as the subspace of $2\pi$-periodic even functions $%
u(t,x)\ $satisfying Dirichlet boundary conditions $u(\pm\pi/2)=0$. Thus
functions $u\in C_{sym}^{2}$ have the expansion 
\begin{equation*}
u(t,x)=\sum_{(j,k)\in\mathbb{N}\times\mathbb{N}^{+}}u_{j,k}~\cos
(jt)v_{k}(x;\lambda)\text{,\qquad}u_{j,k}\in\mathbb{R}. 
\end{equation*}
By the estimates (\ref{esei}), the norm%
\begin{equation*}
\left\vert u\right\vert _{s}^{2}\bydef\sum_{(j,k)\in\mathbb{N}\times \mathbb{%
N}^{+}}\left\vert u_{j,k}\right\vert ^{2}\left( j^{2}+k^{2}+1\right) ^{s}%
\text{,} 
\end{equation*}
is equivalent to the standard Sobolev norm. Thus the standard Sobolev space
can be defined by 
\begin{equation*}
H^{s}(S^{1}\times\lbrack-\pi/2,\pi/2];\mathbb{R})\bydef\left\{ u(t,x)\in
L^{2}:\left\vert u\right\vert _{s}<\infty\right\} \text{.} 
\end{equation*}

\begin{definition}
Since the embedding $H^{s}\subset C^{2}$ holds for $s\geq3$, then the
subspace 
\begin{equation}
H_{sym}^{s}\bydef\left\{ u(t,x)\in H^{s}:~~u(t,\pm\pi/2)=0,~~u(t,x)=u(-t,x)\right\}  \label{Hsym}
\end{equation}
is well defined for $s\geq3$.
\end{definition}

The linear hyperbolic map 
\begin{equation*}
L(\lambda,p/q)=(p/q)^{2}\partial_{t}^{2}+A(\lambda):D(L)\subset
H_{sym}^{s}\rightarrow H_{sym}^{s} 
\end{equation*}
is a closed operator, where the subspace $D(L)\subset H_{sym}^{s}$ is closed
under the norm 
\begin{equation*}
\left\vert u\right\vert _{L}^{2}=\left\vert Lu\right\vert
_{s}^{2}+\left\vert u\right\vert _{s}^{2}\text{.} 
\end{equation*}
The linear map $L$ has eigenvalues%
\begin{equation*}
\mu_{j,k}(\lambda,p/q)\bydef-\left( pj/q\right) ^{2}+\mu_{k}(\lambda) 
\end{equation*}
and eigenfunctions $\cos(jt)v_{k}(x)$ for $(j,k)\in\mathbb{N}\times \mathbb{N%
}^{+}$.

\begin{lemma}
\label{lem:lambda} Let $\lambda\in\lbrack\varepsilon,\lambda_{\ast}]$. If $%
pj/q=k$, then $\left\vert \mu_{j,k}(\lambda,p/q)\right\vert \geq2\varepsilon 
$. If $pj/q\neq k$, then%
\begin{equation*}
\left\vert \mu_{j,k}(\lambda,p/q)\right\vert \geq\left( pj/q+k\right)
/q-9\lambda\text{.} 
\end{equation*}
\end{lemma}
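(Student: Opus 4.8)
The plan is to deduce both inequalities directly from the two-sided estimate \eqref{esei} for $k^2-\mu_k(\lambda)$, combined with the single arithmetic observation that $kq-pj$ is an integer. Writing $\mu_k(\lambda)=k^2-\theta$ with $2\lambda\le\theta\le 9\lambda$ by \eqref{esei}, the eigenvalue of $L(\lambda,p/q)$ becomes
\begin{equation*}
\mu_{j,k}(\lambda,p/q)=k^2-(pj/q)^2-\theta .
\end{equation*}
In the resonant case $pj/q=k$ the first two terms cancel, so $|\mu_{j,k}(\lambda,p/q)|=\theta\ge 2\lambda\ge 2\varepsilon$, where the final step uses $\lambda\ge\varepsilon$. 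This settles the first claim.

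For the non-resonant case $pj/q\ne k$ I would first use the reverse triangle inequality to peel off $\theta$, then bound the rational gap from below: since $q(k-pj/q)=kq-pj$ is a nonzero integer we have $|k-pj/q|\ge 1/q$, hence
\begin{equation*}
|\mu_{j,k}(\lambda,p/q)|\ge \bigl|k^2-(pj/q)^2\bigr|-\theta=|k-pj/q|\,(k+pj/q)-\theta\ge \frac{pj/q+k}{q}-9\lambda .
\end{equation*}
Note that only $\theta\ge 0$ is used in the reverse-triangle step, so no lower bound on $\theta$ enters this case; the lower bound in \eqref{esei} is needed only for the resonant estimate.

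This is a short argument with no genuine obstacle; the one point worth flagging is that the bound is only useful — indeed only non-vacuous — when $q$ is large enough that $(pj/q+k)/q$ dominates $9\lambda$, and that the factor $1/q$ produced by $|kq-pj|\ge 1$ is exactly the small-divisor-type quantity that the restriction on the temporal period in Theorem~\ref{thm:main} must eventually control. In the write-up I would state the computation of $\mu_{j,k}$ in terms of $\theta$ once and then present the two cases as the two displayed lines above.
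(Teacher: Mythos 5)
Your argument is correct and is essentially the paper's own proof: the resonant case reduces to the lower bound $k^{2}-\mu_{k}(\lambda)\geq 2\lambda\geq 2\varepsilon$ from \eqref{esei}, and the non-resonant case combines the reverse triangle inequality with $|k-pj/q|\geq 1/q$ and the upper bound $k^{2}-\mu_{k}(\lambda)\leq 9\lambda$. The reformulation via $\theta$ and the observation about which half of \eqref{esei} is used where are cosmetic; no further changes are needed.
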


\begin{proof}
If $pj/q=k$, then 
\begin{equation*}
\left\vert \mu_{j,k}(\lambda,p/q)\right\vert =k^{2}-\mu_{k}(\lambda
)\geq2\lambda\geq2\varepsilon\text{.} 
\end{equation*}
If $-pj/q+k\neq0$, then $\left\vert -pj/q+k\right\vert \geq1/q$ and%
\begin{equation*}
\left\vert -\left( pj/q\right) ^{2}+k^{2}\right\vert \geq\left\vert
-pj/q+k\right\vert \left\vert pj/q+k\right\vert \geq\left( pj/q+k\right)
/q~. 
\end{equation*}
Thus we have 
\begin{equation*}
\left\vert \mu_{j,k}(\lambda,p/q)\right\vert \geq\left\vert -\left(
pj/q\right) ^{2}+k^{2}\right\vert -\left\vert
\mu_{k}(\lambda)-k^{2}\right\vert \geq\left( pj/q+k\right) /q-9\lambda\text{.%
} 
\end{equation*}
\end{proof}

We define $\mathcal{N}(\lambda,p/q)$ as the set of lattice points where $%
L(\lambda,p/q)$ has a zero eigenvalue, 
\begin{equation}
\mathcal{N}(\lambda,p/q)\bydef\left\{ (j,k):\mu_{j,k}(\lambda,p/q)=0\right\} 
\text{.}  \label{eq:kernel}
\end{equation}

\begin{definition}
Notation $b\lesssim a$ means that there is a positive constant $C$ such that 
$b\leq Ca$.
\end{definition}

\begin{proposition}
If $\lambda\in\lbrack\varepsilon,\lambda_{\ast}-\varepsilon]$, then $%
\left\vert \mu_{j,k}(\lambda,p/q)\right\vert \gtrsim\varepsilon$ for any $%
\left( j,k\right) \notin\mathcal{N}$ and $\mathcal{N}$ is a bounded set with%
\begin{equation}
\mathcal{N}\subset\left\{ (j,k)\in\mathbb{N}^{+}\times \mathbb{N}%
^{+}:pj+qk<9q^{2}\lambda_{\ast}+1\right\} .  \label{bs}
\end{equation}
\end{proposition}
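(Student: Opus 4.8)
The plan is to combine the two cases of Lemma~\ref{lem:lambda} into a single statement valid on the parameter window $[\varepsilon,\lambda_\ast-\varepsilon]$, and then extract boundedness of $\mathcal{N}$ from the second case.

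First I would fix $\lambda\in[\varepsilon,\lambda_\ast-\varepsilon]$ and take $(j,k)\notin\mathcal{N}$, so that $\mu_{j,k}(\lambda,p/q)\neq 0$. If $pj/q=k$, then Lemma~\ref{lem:lambda} already gives $|\mu_{j,k}(\lambda,p/q)|\geq 2\varepsilon\gtrsim\varepsilon$, so this case is immediate. The work is in the case $pj/q\neq k$, where Lemma~\ref{lem:lambda} yields $|\mu_{j,k}(\lambda,p/q)|\geq (pj/q+k)/q-9\lambda = (pj+qk)/q^2 - 9\lambda$. I would split this case further according to whether $pj+qk$ is large or small relative to the threshold $9q^2\lambda_\ast+1$ appearing in \eqref{bs}. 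If $pj+qk\geq 9q^2\lambda_\ast+1$, then since $\lambda\leq\lambda_\ast$ we get $(pj+qk)/q^2-9\lambda \geq (9q^2\lambda_\ast+1)/q^2 - 9\lambda_\ast = 1/q^2 > 0$, and in fact $\geq 1/q^2$, which is a fixed positive constant (independent of $\lambda$), hence $\gtrsim\varepsilon$ after possibly shrinking the implied constant so the bound holds for all $\varepsilon$ up to $\lambda_\ast$. If instead $pj+qk < 9q^2\lambda_\ast+1$, then $(j,k)$ lies in the set on the right-hand side of \eqref{bs}; since $\mu_{j,k}\neq 0$, it is a nonzero eigenvalue drawn from this finite set of lattice points, so $|\mu_{j,k}(\lambda,p/q)|$ is bounded below by the minimum of finitely many quantities, each continuous and nonvanishing in $\lambda$ on the compact interval $[\varepsilon,\lambda_\ast-\varepsilon]$ — wait, nonvanishing is the subtle point, so I will handle it more carefully below — and hence is $\gtrsim\varepsilon$.

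For the boundedness claim \eqref{bs} itself: if $(j,k)\in\mathcal{N}$ then $\mu_{j,k}(\lambda,p/q)=0$, so in particular the inequality of Lemma~\ref{lem:lambda} in the case $pj/q\neq k$ forces $0\geq (pj+qk)/q^2 - 9\lambda$, i.e.\ $pj+qk\leq 9q^2\lambda\leq 9q^2\lambda_\ast < 9q^2\lambda_\ast+1$. (The case $pj/q=k$ cannot occur for $(j,k)\in\mathcal{N}$ when $\lambda\geq\varepsilon>0$, since then $|\mu_{j,k}|\geq 2\varepsilon>0$.) Also $j\geq 1$ and $k\geq 1$ since $(0,k)$ would give $\mu_{k}(\lambda)=0$ which is excluded for $\lambda<\lambda_\ast$, and $k=0$ is not in the index set $\mathbb{N}\times\mathbb{N}^+$; this gives the stated inclusion in $\mathbb{N}^+\times\mathbb{N}^+$.

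The main obstacle is the lower bound in the sub-case $pj+qk < 9q^2\lambda_\ast+1$ with $pj/q\neq k$: here the crude bound $(pj+qk)/q^2-9\lambda$ can be negative, so it is useless, and one must instead argue that $|\mu_{j,k}(\lambda,p/q)|=|k^2-\mu_k(\lambda)-(pj/q)^2|$ stays away from zero. The clean way is to note that, by definition of $\mathcal{N}$ and the hypothesis $(j,k)\notin\mathcal{N}$, the function $\lambda\mapsto\mu_{j,k}(\lambda,p/q)=\mu_k(\lambda)-(pj/q)^2$ is continuous and has no zero at our fixed $\lambda$; but to get a uniform bound over all such $(j,k)$ and all $\lambda\in[\varepsilon,\lambda_\ast-\varepsilon]$ one uses that there are only finitely many candidate pairs $(j,k)$ (by \eqref{bs}), that $\mu_k$ is strictly monotone decreasing in $\lambda$ by Proposition~\ref{prop:B} so each $\mu_{j,k}(\cdot,p/q)$ is strictly monotone and thus has at most one zero $\lambda_0$ in the open interval $(0,\lambda_\ast)$, and that by construction a pair $(j,k)$ contributing a zero at some interior $\lambda_0$ is exactly what is excluded along this branch; away from such isolated zeros, compactness of $[\varepsilon,\lambda_\ast-\varepsilon]$ gives a positive minimum. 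One then absorbs the finitely many constants (including $1/q^2$) into a single implied constant, which may be taken proportional to $\varepsilon$ (shrinking it if necessary so that the inequality $|\mu_{j,k}|\gtrsim\varepsilon$ holds throughout), completing the proof.
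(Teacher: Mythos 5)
Your overall route is the same as the paper's: feed Lemma~\ref{lem:lambda} into a case split on whether $pj/q=k$ and on the size of $pj+qk$, observe that away from the lattice points of \eqref{bs} the lemma gives a lower bound of at least $1/q^{2}\gtrsim\varepsilon$, and read off the inclusion \eqref{bs} from the failure of that inequality (your version, setting $\mu_{j,k}=0$ directly in the lemma, is equivalent to the paper's). Your exclusion of $j=0$ from $\mathcal{N}$ via positivity of $\mu_{1}(\lambda)$ for $\lambda<\lambda_{\ast}$ also matches the paper, which in addition uses the hypothesis $\lambda\le\lambda_{\ast}-\varepsilon$ to assert $\mu_{0,k}(\lambda,p/q)=\mu_{k}(\lambda)\gtrsim\varepsilon$ for the $j=0$ lower bound rather than routing those points through the finite exceptional set as you do.

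The one place where you go beyond the paper is also where your argument does not close. For the finitely many lattice points inside the set \eqref{bs} that are not in $\mathcal{N}(\lambda,p/q)$, you try to extract an $\varepsilon$-uniform lower bound from monotonicity plus compactness, asserting that ``a pair contributing a zero at some interior $\lambda_{0}$ is exactly what is excluded along this branch.'' It is not: the hypothesis $(j,k)\notin\mathcal{N}(\lambda,p/q)$ only excludes a zero at the particular $\lambda$ under consideration. If $\mu_{j,k}(\cdot,p/q)$ vanishes at some $\lambda_{1}$ in the interior of $[\varepsilon,\lambda_{\ast}-\varepsilon]$, then for $\lambda$ slightly off $\lambda_{1}$ the pair still lies outside $\mathcal{N}(\lambda,p/q)$ while $|\mu_{j,k}(\lambda,p/q)|$ is arbitrarily small, so no uniform bound $\gtrsim\varepsilon$ holds for such points. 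To be fair, the paper's own proof simply does not address these points (it establishes the bound only off the exceptional set and for $j=0$ and $pj/q=k$); what is actually used downstream is invertibility of $PL(\lambda)P$ for $\lambda$ near a fixed non-resonant $\lambda_{0}$, with $P$ built from $\mathcal{N}(\lambda_{0},p/q)$, and that follows by continuity of the finitely many nonzero eigenvalues at $\lambda_{0}$. So your treatment of \eqref{bs} and of the bound off the exceptional set is correct and coincides with the paper; the additional compactness step should either be dropped (matching the paper) or reformulated as a statement local in $\lambda$ around a fixed $\lambda_{0}$.
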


\begin{proof}
Since the first eigenvalue $\mu_{1}(\lambda)$ is positive for $\lambda
\in\lbrack0,\lambda_{\ast})$ and $\mu_{1}(\lambda_{\ast})=0$, then $\mu
_{0,k}(\lambda,p/q)=\mu_{k}(\lambda)\gtrsim\varepsilon$ for $\lambda
<\lambda_{\ast}-\varepsilon$, which implies that $\left( j,k\right) \in%
\mathcal{N}$ only if $j>0$. From Lemma~\ref{lem:lambda}, we have that $%
\left\vert \mu_{j,k}(\lambda,p/q)\right\vert \gtrsim\varepsilon$ for any $%
(j,k)\in\mathbb{N}^{+}\times\mathbb{N}^{+}$ except when $\left(
pj/q+k\right) /q-9\lambda<1/q^{2}$. This inequality is equivalent to 
\begin{equation*}
pj+qk<9\lambda q^{2}+1<9\lambda_{\ast}q^{2}+1. 
\end{equation*}
\end{proof}

\subsection{The Lyapunov-Schmidt reduction}

The $2\pi q/p$-periodic solutions of equation (\ref{1}) of the form $%
U(t,x)=u_{\lambda }(x)+u(pt/q,x)$, where $u(t,x)$ is an even $2\pi $%
-periodic perturbation satisfying Dirichlet boundary conditions $u(t,\pm \pi
/2)=0$, are solutions of the equation%
\begin{equation*}
\left( p/q\right) ^{2}u_{tt}-u_{xx}+\frac{\lambda }{\left( 1+u_{\lambda
}+u\right) ^{2}}-\frac{\lambda }{\left( 1+u_{\lambda }\right) ^{2}}=0.
\end{equation*}%
Thus the equation for the perturbation $u$ from the steady state $u_{\lambda
}$ reads
\begin{equation}
L(\lambda ,p/q)u+g(u)=0\text{,}  \label{EQ}
\end{equation}%
where $L$ is defined in (\ref{L}) and the quadratic
nonlinear operator $g$ is 
\begin{eqnarray}
g(u;\lambda ) &\bydef&\frac{\lambda }{\left( 1+u_{\lambda }+u\right) ^{2}}-\frac{%
\lambda }{\left( 1+u_{\lambda }\right) ^{2}}+\frac{2\lambda }{\left( 1+u_{\lambda }\right) ^{3}}u
\label{g} \\
&=&\lambda \frac{3(1+u_{\lambda })+2u}{\left( 1+u_{\lambda }\right)
^{3}\left( 1+u_{\lambda }+u\right) ^{2}}u^{2}.  \notag
\end{eqnarray}

In this section we make a Lyapunov-Schmidt reduction; namely, we solve the
equation (\ref{EQ}) in the range of the operator $L$ (the range equation)
and we obtain an equivalent equation to (\ref{EQ}) defined in the kernel of $%
L$ (the bifurcation equation).

We start by defining the projection in the kernel of $L(\lambda_{0})$ as 
\begin{equation*}
Qu=\sum_{(j,k)\in\mathcal{N}}u_{j,k}\cos(jt)~v_{k}(x;\lambda
_{0}):H_{sym}^{s}\rightarrow H_{sym}^{s}\text{,} 
\end{equation*}
and the projection $P=I-Q:H_{sym}^{s}\rightarrow H_{sym}^{s}$ in the
complement to the kernel of $L(\lambda_{0})$. We have the following result.

\begin{corollary}
Since $\left\vert \mu_{j,k}(\lambda)\right\vert
^{-1}\lesssim\varepsilon^{-1} $ for $(j,k)\notin\mathcal{N}$, then%
\begin{equation}
\left\vert \left( PLP\right) ^{-1}u\right\vert _{s}\lesssim\varepsilon
^{-1}\left\vert Pu\right\vert _{s}.  \label{Est}
\end{equation}
Thus $\left( PLP\right) ^{-1}:PH_{sym}^{s}\rightarrow PH_{sym}^{s}$ is a
bounded operator. However, the operator $\left( PLP\right) ^{-1}$ is not
compact because the embedding $D(L)\subset H^{s}$ is not necessarily compact.
\end{corollary}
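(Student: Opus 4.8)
The plan is to prove the two assertions in the corollary in sequence: first the norm estimate \eqref{Est}, which establishes boundedness of $(PLP)^{-1}$ on the orthogonal complement of the kernel, and second the failure of compactness of this inverse. For the first part, I would use the fact that $L(\lambda_0)$ is diagonalized by the orthonormal-type basis $\cos(jt)v_k(x;\lambda_0)$ with eigenvalues $\mu_{j,k}(\lambda_0,p/q)$. On $PH^s_{sym}$ the operator $PLP$ acts componentwise by multiplication by $\mu_{j,k}$ over all $(j,k)\notin\mathcal N$, so its inverse multiplies the $(j,k)$-component by $\mu_{j,k}^{-1}$. The crucial input is the lower bound $|\mu_{j,k}(\lambda_0,p/q)|\gtrsim\varepsilon$ for $(j,k)\notin\mathcal N$, which is exactly the content of the Proposition proved just above (valid since $\lambda_0\in[\varepsilon,\lambda_\ast-\varepsilon]$). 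Writing $u=\sum_{(j,k)\notin\mathcal N}u_{j,k}\cos(jt)v_k$ for $u\in PH^s_{sym}$, one has
\begin{equation*}
\left\vert (PLP)^{-1}u\right\vert_s^2=\sum_{(j,k)\notin\mathcal N}\left\vert\mu_{j,k}\right\vert^{-2}\left\vert u_{j,k}\right\vert^2\left(j^2+k^2+1\right)^s\leq C\varepsilon^{-2}\sum_{(j,k)\notin\mathcal N}\left\vert u_{j,k}\right\vert^2\left(j^2+k^2+1\right)^s,
\end{equation*}
which is $C\varepsilon^{-2}\left\vert Pu\right\vert_s^2$ since $Pu=u$; taking square roots gives \eqref{Est}. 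One small point to address is that the norm $|\cdot|_s$ used here is the one built from the expansion in the $v_k(\cdot;\lambda_0)$ basis, and by the remarks following \eqref{esei} this is equivalent to the standard Sobolev norm, so the estimate transfers to $H^s_{sym}$ with an adjusted constant.

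For the second assertion, the point is that $(PLP)^{-1}$, being a bounded diagonal operator whose eigenvalues $\mu_{j,k}^{-1}$ do \emph{not} tend to zero along the infinite index set $(j,k)\notin\mathcal N$, cannot be compact. Concretely, I would exhibit an orthonormal (in $|\cdot|_s$) sequence: choose an infinite sequence of pairs $(j_n,k_n)\notin\mathcal N$ and set $w_n\bydef\left(j_n^2+k_n^2+1\right)^{-s/2}\cos(j_nt)v_{k_n}(x;\lambda_0)$, so that $|w_n|_s=|v_{k_n}|_{L^2}^{-1}\cdot(\text{const})$ — more simply, normalize the $v_k$ in $L^2$ and then $|w_n|_s=1$. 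Then $(PLP)^{-1}w_n=\mu_{j_n,k_n}^{-1}w_n$, and since $|\mu_{j_n,k_n}^{-1}|\gtrsim$ (stays bounded away from $0$ if we pick, e.g., $j_n=0$ is excluded; instead along a sequence with $pj_n/q=k_n$ the value is $k_n^2-\mu_{k_n}\in[2\lambda_0,9\lambda_0]$, bounded, whose reciprocal is bounded below) the images $(PLP)^{-1}w_n$ form a bounded sequence with $|(PLP)^{-1}(w_n-w_m)|_s\gtrsim|w_n-w_m|_s=\sqrt2$ pairwise, hence no convergent subsequence. This shows $(PLP)^{-1}$ is not compact, equivalently the embedding $D(L)\subset H^s$ is not compact since $(PLP)^{-1}$ maps $PH^s_{sym}$ boundedly into $D(L)$ and composing with a compact embedding would force compactness.

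The main obstacle is essentially bookkeeping rather than conceptual: one must be careful that the index set $(j,k)\notin\mathcal N$ genuinely contains an infinite sequence along which $|\mu_{j,k}|$ stays bounded \emph{above} (so that its reciprocal stays bounded \emph{below}), since a diagonal operator with eigenvalues going to infinity would still be non-compact but the cleanest argument wants the reciprocals bounded below. The resolvent-type identity $\mu_{j,k}=-(pj/q)^2+\mu_k(\lambda_0)$ shows that whenever $k/j$ is close to $p/q$ the quantity $|{-}(pj/q)^2+k^2|$ is small (order $1/q$ times $(pj/q+k)$), and combined with $|\mu_k-k^2|\leq 9\lambda_\ast$ one sees there are infinitely many $(j,k)$ with $|\mu_{j,k}|$ of moderate size; alternatively, as noted, the diagonal $j=0$, $k$ large gives $\mu_{0,k}=\mu_k(\lambda_0)\to\infty$ which also suffices for non-compactness by the unbounded-eigenvalue criterion. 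I would use whichever sequence makes the write-up shortest, most likely the latter. Everything else — diagonalizability, the norm equivalence, the Weyl-sequence argument — is standard once the eigenvalue bounds from the preceding propositions are in hand.
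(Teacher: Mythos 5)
Your proof of the estimate \eqref{Est} is the intended one and matches what the paper leaves implicit: in the basis $\cos(jt)\,v_{k}(x;\lambda_{0})$ that defines $|\cdot|_{s}$, the operator $PLP$ is diagonal with entries $\mu_{j,k}(\lambda_{0},p/q)$, so $(PLP)^{-1}$ is diagonal with entries $\mu_{j,k}^{-1}$, and the proposition just above supplies $|\mu_{j,k}|\gtrsim\varepsilon$ for $(j,k)\notin\mathcal{N}$; the norm-equivalence caveat you raise is already covered by the paper's remark following \eqref{esei}. This part is correct and complete.

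The non-compactness part contains a genuine error in the route you say you would ``most likely'' take. Along $j=0$, $k\to\infty$ the eigenvalues of $(PLP)^{-1}$ are $\mu_{0,k}^{-1}=\mu_{k}(\lambda_{0})^{-1}\to 0$, and a diagonal operator whose eigenvalues tend to zero along a subsequence is perfectly consistent with compactness (a diagonal operator is compact precisely when its eigenvalues tend to zero); the ``unbounded-eigenvalue criterion'' shows that $L$ itself is unbounded, not that its inverse fails to be compact. What you actually need is an infinite family of indices outside $\mathcal{N}$ along which $|\mu_{j,k}|^{-1}$ stays bounded \emph{below}, i.e.\ $|\mu_{j,k}|$ stays bounded \emph{above}, and your first construction provides exactly this: the lattice points $(j,k)=(nq,np)$, $n\in\mathbb{N}^{+}$, satisfy $pj/q=k$, hence $|\mu_{j,k}|=k^{2}-\mu_{k}(\lambda_{0})\in[2\lambda_{0},9\lambda_{0}]$ by \eqref{esei}; they lie outside $\mathcal{N}$ since $|\mu_{j,k}|\geq 2\lambda_{0}>0$, and the associated normalized eigenfunctions give the pairwise-separated image sequence that rules out compactness. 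Keep that construction and discard the $j=0$ alternative. For what it is worth, this explicit Weyl sequence is more than the paper itself offers, since the paper merely remarks that the embedding $D(L)\subset H^{s}$ is ``not necessarily compact.''
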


For $s\geq3$, the space $H_{sym}^{s}$ is a Banach algebra. Since $g(u)=%
\mathcal{O}(\left\vert u\right\vert ^{2})$ is analytic in a neighborhood of $%
u=0$, then the nonlinear operator 
\begin{equation*}
g(u)=\mathcal{O}(\left\vert u\right\vert _{s}^{2}):\mathcal{B}_{r}\subset
H_{sym}^{s}\rightarrow H_{sym}^{s} 
\end{equation*}
is continuous and well defined in 
\begin{equation*}
\mathcal{B}_{r}\bydef\{u\in H_{sym}^{s}:\left\vert u\right\vert _{s}<r\}. 
\end{equation*}

Setting 
\begin{equation*}
v=Qu,\qquad w=Pu\text{,} 
\end{equation*}
then $u=v+w$. Thus solutions to equation (\ref{EQ}) are solutions of the
kernel equation%
\begin{equation}
QLQv+Qg(v+w)=0,  \label{EQQ}
\end{equation}
and the range equation%
\begin{equation}
PLPw+Pg(v+w)=0.  \label{EQP}
\end{equation}

The solutions to the range equation \eqref{EQP} in a neighborhood of $%
(v,\lambda)=(0,\lambda_{0})$ are fixed points of the operator 
\begin{equation*}
Kw\bydef\left( PLP\right) ^{-1}g(v+w):P\mathcal{B}_{r}\subset
H_{sym}^{s}\rightarrow H_{sym}^{s}\text{.} 
\end{equation*}
The estimate (\ref{Est}) and the fact that $g(v+w)=\mathcal{O}(\left\vert
w\right\vert _{s}^{2})$ imply that $K$ is a contraction from the domain $P%
\mathcal{B}_{r}$ into itself when we choose $r<<\varepsilon$. By the
contracting mapping theorem there is a unique fixed point $w(v,\lambda)\in
H_{sym}^{s}$ of $K$ for $(v,\lambda)$ in a neighborhood of $(0,\lambda_{0})$%
. Thus there is a unique function $w(v,\lambda)\in H_{sym}^{s}$ that solves
the range equation \eqref{EQP} in a neighborhood of $(0,\lambda_{0})$. We
conclude that the solutions to the equation (\ref{EQ}) are given by the
solutions of the bifurcation equation 
\begin{equation}
QLQv+Qg(v+w(v,\lambda)):Q\mathcal{B}_{r}\subset\ker
L(\lambda_{0})\rightarrow\ker L(\lambda_{0})~.  \label{EQB}
\end{equation}

\subsection{The bifurcation equation}

To solve the bifurcation equation (\ref{EQB}) we need to look for values $%
\lambda_{0}\in(0,\lambda_{\ast})$ such that the linearization $L(\lambda
_{0})$ has a nontrivial kernel, that is $\mu_{j,k}(\lambda_{0},p/q)=0$ for
some lattice point $(j,k)\in\mathcal{N}$.

\begin{proposition}
\label{prop:bifurcation} Define 
\begin{equation*}
B_{k}\bydef\left( \sqrt{ \mu_{k}(\lambda_{\ast})},k\right) \subset\left( 
\sqrt{k^{2}-2\lambda_{\ast}},k\right) ~. 
\end{equation*}
For each rational $p/q\in B_{k}$ there is a \textbf{unique} $\lambda_{0}
\in(0,\lambda_{\ast})$ such that $\mu_{1,k}(\lambda_{0},p/q)=0$.
\end{proposition}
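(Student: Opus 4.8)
The plan is to study the map $\lambda \mapsto \mu_{1,k}(\lambda,p/q) = -(p/q)^2 + \mu_k(\lambda)$ on the interval $\lambda \in [0,\lambda_\ast]$ and show it has a unique zero. First I would record the two endpoint values. At $\lambda = 0$ the elliptic operator $A(0) = -\partial_x^2$ has $\mu_k(0) = k^2$, so $\mu_{1,k}(0,p/q) = k^2 - (p/q)^2 > 0$ precisely when $p/q < k$, which holds since $p/q \in B_k \subset (\sqrt{k^2 - 2\lambda_\ast},\, k)$. At the other endpoint $\lambda = \lambda_\ast$, we have $\mu_{1,k}(\lambda_\ast,p/q) = \mu_k(\lambda_\ast) - (p/q)^2 < 0$ exactly when $p/q > \sqrt{\mu_k(\lambda_\ast)}$, which is the left endpoint of the open interval $B_k$; so $p/q \in B_k$ gives $\mu_{1,k}(\lambda_\ast,p/q) < 0$. (The containment $B_k \subset (\sqrt{k^2 - 2\lambda_\ast},k)$ quoted in the statement is itself just the estimate \eqref{esei} applied at $\lambda = \lambda_\ast$, which guarantees the interval $B_k$ is nonempty and contains rationals.)

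Next I would invoke continuity and strict monotonicity of $\lambda \mapsto \mu_k(\lambda)$. Continuity of the simple eigenvalue $\mu_k(\lambda)$ in $\lambda$ follows from analytic (or at least continuous) dependence of the coefficient $2\lambda(1+u_\lambda(x))^{-3}$ of the Sturm--Liouville operator $A(\lambda)$ on the parameter, using that $u_\lambda$ depends continuously on $\lambda$ and that the eigenvalues are simple (so no crossing/branching issues arise); this is standard perturbation theory for self-adjoint operators with compact resolvent. Strict monotonicity is exactly Proposition~\ref{prop:B}: $\mu_k(\lambda)$ is strictly decreasing on $[0,\lambda_\ast]$. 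Therefore $\mu_{1,k}(\cdot,p/q)$ is continuous and strictly decreasing, passes from a positive value at $\lambda = 0$ to a negative value at $\lambda = \lambda_\ast$, and hence has exactly one zero $\lambda_0 \in (0,\lambda_\ast)$ by the intermediate value theorem together with injectivity.

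I expect the only real subtlety to be the continuity of $\mu_k(\lambda)$ in $\lambda$, since the trivial branch $u_\lambda$ is not explicit; but this is handled by the same machinery already used elsewhere in the paper — $u_\lambda$ is a smooth branch (away from the fold at $\lambda_\ast$, and continuous up to it), so the potential $B(\lambda) = 2\lambda(1+u_\lambda)^{-3}$ is continuous in $\lambda$ as a bounded multiplication operator, and relatively bounded perturbation theory then gives continuity of each simple eigenvalue. Everything else is an immediate assembly of: the endpoint sign computations, Proposition~\ref{prop:B} for strictness, and the intermediate value theorem for existence and uniqueness of the crossing.
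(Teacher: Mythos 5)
Your proposal is correct and follows essentially the same route as the paper: both arguments rest on the continuity and strict monotonicity of $\lambda \mapsto \mu_k(\lambda)$ (the latter being Proposition~\ref{prop:B}), the endpoint values $\mu_k(0)=k^2$ and $\mu_k(\lambda_\ast)$ bracketing $(p/q)^2$ for $p/q \in B_k$, and the intermediate value theorem for existence with monotonicity for uniqueness. The only difference is that you spell out the continuity of the eigenvalue in $\lambda$ via perturbation theory, a point the paper simply asserts.
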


\begin{proof}
Since $\mu_{k}(\lambda;p/q)$ is decreasing and continuous for $\lambda
\in(0,\lambda_{\ast})$, any $\lambda_{0}$ such that $\mu_{1,k}(\lambda
_{0},p/q)=-\left( p/q\right) ^{2}+\mu_{k}(\lambda_{0})=0$ is unique. The
result follows from the fact that the eigenvalue $\mu_{k}(\lambda)$ goes
from the value $\mu_{k}(\lambda_{\ast})\leq k^{2}-2\lambda_{\ast}$ to $\mu
_{k}(0)=k^{2}$ for $\lambda$ in the interval$\ (0,\lambda_{\mathbb{\ast}})$.
\end{proof}

For each $\lambda_{0}\in(0,\lambda_{\ast})$ such that $\mu_{j,k}(\lambda
_{0},p/q)=0$, the set $\mathcal{N}(\lambda_{0},p/q)$ representing the kernel
of $L(\lambda_{0},p/q)$ may contain additional resonant points. If these
resonances exist, they are contained in the bounded set given in (\ref{bs}).

\begin{definition}
We say that $\lambda_{0}$ is a non-resonant value if 
\begin{equation}
\mathcal{N}(\lambda_{0},p/q)=\left\{ (j,k)\in\mathbb{N\times N}%
^{+}:\mu_{j,k}(\lambda_{0},p/q)=0\right\} =\{\left( 1,k\right) \}\text{.}
\end{equation}
If $\lambda_{0}$ is non-resonant, then the kernel has dimension one, that is 
\begin{equation*}
\ker L(\lambda_{0},p/q)=\left\{ b\cos(t)v_{k}(x;\lambda_{0}):b\in \mathbb{R}%
\right\} . 
\end{equation*}
\end{definition}

To prove the existence of a simple bifurcation, we need to choose
non-resonant values $\lambda_{0}$. The following lemma assures the existence
of an infinite set of non-resonant values $\lambda_{0}$.

\begin{proposition}
The set of non-resonant points $\lambda_{0}\in(0,\lambda_{\ast})$ such that 
\begin{equation*}
\mu_{1,k}(\lambda_{0},p/q)=0 
\end{equation*}
for some $p/q\in B_{k}$ is infinite.
\end{proposition}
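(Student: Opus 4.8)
The plan is to show that for each fixed $k$ there are infinitely many admissible rationals $p/q$ in the interval $B_k$, and that among the resulting bifurcation values $\lambda_0$ only finitely many can fail to be non-resonant for a given denominator $q$. First I would recall from Proposition~\ref{prop:bifurcation} that every rational $p/q \in B_k$ produces a unique $\lambda_0 \in (0,\lambda_\ast)$ with $\mu_{1,k}(\lambda_0,p/q)=0$. Since $B_k = (\sqrt{\mu_k(\lambda_\ast)},k)$ is a nonempty open interval (it has positive length because $\mu_k(\lambda_\ast) \le k^2 - 2\lambda_\ast < k^2$), it contains infinitely many rationals, so there is no shortage of candidate bifurcation points; the only issue is excluding resonances.

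Next I would fix a rational $p/q \in B_k$, set $\varepsilon$ so that the corresponding $\lambda_0 \in [\varepsilon,\lambda_\ast-\varepsilon]$, and use the bound \eqref{bs}: any resonant lattice point $(j,k') \in \mathcal{N}(\lambda_0,p/q)$ other than $(1,k)$ must satisfy $pj + qk' < 9q^2\lambda_\ast + 1$. For such a point, $\mu_{j,k'}(\lambda_0,p/q)=0$ forces $\mu_{k'}(\lambda_0) = (pj/q)^2$, i.e. $pj/q \in B_{k'}$ as well (using that $\mu_{k'}(\lambda_0) \in (\mu_{k'}(\lambda_\ast), k'^2)$ by monotonicity, Proposition~\ref{prop:B}). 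So a resonance at $\lambda_0$ is exactly a coincidence $\mu_{k'}(\lambda_0) = (pj/q)^2$ for some $(j,k') \ne (1,k)$ in the bounded box. Now I would argue by contradiction: suppose that for the fixed $k$ only finitely many non-resonant $\lambda_0$ exist. Then all but finitely many rationals $p/q \in B_k$ give resonant $\lambda_0$. One clean way to finish is to restrict to denominators $q$ of a fixed value and let $p$ range: for each $q$ there are only finitely many numerators $p$ with $p/q \in B_k$ (at most $q \cdot \operatorname{length}(B_k) + 1$ of them), and for each such $p/q$ the resonance, if present, pins down a point in the finite set $\{(j,k') : pj+qk' < 9q^2\lambda_\ast+1\}$; but as $q \to \infty$ the number of available rationals $p/q$ in $B_k$ grows linearly in $q$, while I would show the number that can be resonant grows strictly slower, yielding infinitely many non-resonant values.

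The cleaner route, which I would actually carry out, avoids the counting asymmetry: fix the denominator $q=1$ is impossible since $B_k$ may contain no integer, so instead I would choose a single large prime denominator $q$ and vary $p$. For $p/q \in B_k$ with $q$ prime, a competing resonance $\mu_{k'}(\lambda_0) = (pj/q)^2$ with $j \ge 1$ would require $pj/q \in B_{k'}$; since $q$ is prime and (for large $q$) $q \nmid p$, and $pj/q$ must have the same reduced denominator structure, one checks $j$ is forced into a bounded range by \eqref{bs}, and for each fixed $j$ the map $p \mapsto \lambda_0(p/q) \mapsto \mu_{k'}(\lambda_0)$ is strictly monotone (as $\lambda_0$ is strictly monotone in $p/q$ and $\mu_{k'}$ is strictly monotone in $\lambda$), so the equation $\mu_{k'}(\lambda_0(p/q)) = (pj/q)^2$ has at most one solution $p$ per pair $(j,k')$; summing over the finitely many $(j,k')$ allowed by \eqref{bs} shows only finitely many $p$ (with that fixed $q$) are resonant, while infinitely many $p$ give $p/q \in B_k$ as $q$ is taken large. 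Hence infinitely many non-resonant $\lambda_0$. The main obstacle is the bookkeeping in this last step: making the ``at most one $p$ per $(j,k')$'' claim rigorous requires care about whether $pj/q$ is already in lowest terms and whether $(j,k')$ could equal $(1,k)$ after reduction, and one must be sure the box \eqref{bs} is applied with the correct $\varepsilon$ depending on $\lambda_0$ (which itself depends on $p$) — so I would first show that for all $p/q \in B_k$ with $q$ fixed, the associated $\lambda_0$ stay in a single compact subinterval $[\varepsilon,\lambda_\ast-\varepsilon]$, giving a uniform box.
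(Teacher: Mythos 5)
Your strategy (fix a denominator $q$, vary the numerator $p$, and count resonant versus available numerators) is genuinely different from the paper's, but as written it has two gaps that I do not see how to close. First, the key uniqueness claim --- that for fixed $(j,k')$ the equation $\mu_{k'}(\lambda_0(p/q)) = (pj/q)^2$ has at most one solution $p$ --- does not follow from the monotonicity you invoke. As $p$ increases, $(p/q)^2=\mu_k(\lambda_0)$ increases, so $\lambda_0$ decreases, so $\mu_{k'}(\lambda_0)$ \emph{increases}; but the right-hand side $(pj/q)^2$ also increases. Both sides of the equation are strictly increasing in $p$, so monotonicity alone puts no bound on the number of solutions; you would need a quantitative comparison of the two growth rates, which neither the monotonicity statements nor the estimate \eqref{esei} provides. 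Second, even granting one resonant $p$ per pair $(j,k')$, the count does not close: the box \eqref{bs} is $\{pj+qk'<9q^2\lambda_*+1\}$, which contains on the order of $q^2$ lattice points when $p\asymp q$, whereas for a fixed $q$ there are only $O(q)$ numerators with $p/q\in B_k$. So ``only finitely many $p$ are resonant'' is vacuous (there are only finitely many $p$ in total), and your closing sentence ``infinitely many $p$ give $p/q\in B_k$ as $q$ is taken large'' silently switches from fixed $q$ to varying $q$, which breaks the bookkeeping. The same quadratic-versus-linear mismatch undermines the first route you sketch (``the number that can be resonant grows strictly slower''), which you defer rather than prove.

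The paper sidesteps all of this with a frequency-rescaling trick worth comparing against. Given any $\lambda_0$ with $\mathcal{N}(\lambda_0,p/q)\neq\emptyset$, the finitely many resonances $(j_m,k_m)$ all have distinct $j_m$: if $j_m=j_{m'}$ then $\mu_{k_m}(\lambda_0)=\mu_{k_{m'}}(\lambda_0)$, contradicting simplicity of the eigenvalues. Let $j_0$ be the maximal one, with partner $k_0$, and replace the frequency $p/q$ by $p_0/q=pj_0/q\in B_{k_0}$. A resonance $(j,k')$ for the new frequency would produce the resonance $(j_0j,k')$ for the old one with $j_0j\leq j_0$, forcing $j=1$ and $(j,k')=(1,k_0)$. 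Hence \emph{every} $\lambda_0$ with nontrivial kernel is itself non-resonant for a suitably rescaled frequency, and there are infinitely many such $\lambda_0$ (distinct rationals in $B_k$ give distinct $\lambda_0$ by strict monotonicity of $\mu_k$). No counting of resonant numerators is needed, and no primality or lowest-terms analysis arises.
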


\begin{proof}
There is a dense set of rationals $p/q\in B_{k}$ such that $%
\mu_{1,k}(\lambda_{0},p/q)=0$ for some $\lambda_{0}\in(0,\lambda_{\ast})$.
Fix one of those points $\lambda_{0}$. By (\ref{bs}), there is at most a
finite number of resonant elements $\left( j_{m},k_{m}\right) \in\mathbb{N}%
^{+}\times\mathbb{N}^{+}$ such that $\mu_{j_{m},k_{m}}(\lambda_{0},p/q)=-%
\left( pj_{m}/q\right) ^{2}+\mu_{k_{m}}\left( \lambda_{0}\right) =0$ for $%
m\in\{0,...,M\}$. Since the eigenvalues $\mu_{k}(\lambda_{0})$ are simple,
then 
\begin{equation*}
\left( pj_{m}/q\right) ^{2}=\mu_{k_{m}}\left( \lambda_{0}\right) \neq
\mu_{k_{0}}\left( \lambda_{0}\right) =\left( pj_{0}/q\right) ^{2}\text{.} 
\end{equation*}
Therefore, the numbers $j_{m}$'s$\ $are different for different numbers $%
m\in\{0,...,M\}$. This implies that there is a unique lattice point denoted
by $(j_{0},k_{0})$ such that $j_{0}$ is maximal, that is $j_{m}<j_{0}$ for $%
m=1,...,M$.

By choosing $p_{0}=pj_{0}$ we have that $\mu_{1,k_{0}}(%
\lambda_{0},p_{0}/q)=0 $ and $\mu_{j,k}(\lambda_{0},p_{0}/q)\neq0$ for all $%
j>1$; otherwise $j_{0}$ would not be maximal. We conclude that any $%
\lambda_{0}$ such that $\ker L(\lambda_{0},p/q)$ is not trivial is
non-resonant for a rational number $p_{0}/q\in B_{k_{0}}$, that is $\mathcal{%
N}(\lambda_{0},p_{0}/q)=\{(1,k_{0})\}$. Moreover, the set of non-resonant
values $\lambda_{0}$ for rational numbers $p_{0}/q\in B_{k_{0}}$ is
infinite, otherwise there has to be at least one point $\lambda_{0}\in(0,%
\lambda_{\ast})$ with an infinite number of resonances, which is a
contradiction to (\ref{bs}).
\end{proof}

\begin{remark}
The choice of the maximal $p_{0}$ is equivalent to the choice of the minimal
period $T=2\pi q/p_{0}$. This argument is similar to the argument used in 
\cite{Ki00}, \cite{Ga19} and \cite{Ga17}.
\end{remark}

\begin{proposition}
\label{pro:main} If $\lambda_{0}$ is a non-resonant value with $%
\mu_{1,k_{0}}(\lambda_{0})=0$, then equation (\ref{EQ}) has a local
bifurcation of zeros from $(u,\lambda_{0})=(0,\lambda_{0})$ such that 
\begin{equation}
u(x,t)=b\left( \cos t\right) v_{k_{0}}(x)+\mathcal{O}_{C_{sym}^{2}}(b^{2}),%
\qquad\lambda=\lambda_{0}+\mathcal{O}(b)~,  \label{u}
\end{equation}
where $b\in\lbrack0,b_{0})$ and $\mathcal{O}_{C_{sym}^{2}}(b^{2})$ is a
function in $C_{sym}^{2}$ of order $b^{2}$.
\end{proposition}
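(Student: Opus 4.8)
The plan is to apply the Crandall--Rabinowitz bifurcation theorem from a simple eigenvalue \cite{Crandall_1971} to the reduced bifurcation equation \eqref{EQB}, which by the Lyapunov--Schmidt reduction is equivalent to the full equation \eqref{EQ} near $(u,\lambda)=(0,\lambda_0)$. Since $\lambda_0$ is a non-resonant value with $\mu_{1,k_0}(\lambda_0)=0$, the kernel of $L(\lambda_0,p/q)$ is one-dimensional, spanned by $\phi(t,x)\bydef\cos(t)v_{k_0}(x;\lambda_0)$. First I would recast the problem as finding zeros of
\begin{equation*}
F(v,\lambda)\bydef QLQv+Qg(v+w(v,\lambda)):\ker L(\lambda_0)\times \mathbb{R}\to \ker L(\lambda_0),
\end{equation*}
so that $F(0,\lambda)=0$ for all $\lambda$ in a neighborhood of $\lambda_0$ (the trivial branch). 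Because the kernel is one-dimensional we write $v=b\phi$ and view $F$ as a scalar function of $(b,\lambda)$; the $C^{\infty}$ (indeed analytic) dependence of $w(v,\lambda)$ on $(v,\lambda)$, which follows from the implicit/contraction argument producing $w$, together with analyticity of $g$, gives the required smoothness of $F$.

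The three hypotheses of Crandall--Rabinowitz to verify are: (i) $D_v F(0,\lambda_0)=QL(\lambda_0)Q$ has one-dimensional kernel $\operatorname{span}\{\phi\}$ and codimension-one range --- both immediate here since $Q$ projects onto the (one-dimensional, by non-resonance) kernel and $L(\lambda_0)$ restricted to that kernel is zero; (ii) the transversality condition $D_\lambda D_v F(0,\lambda_0)\phi \notin \operatorname{Range} D_v F(0,\lambda_0)=\{0\}$, i.e. $Q\bigl(\partial_\lambda L(\lambda_0)\phi\bigr)\neq 0$; and (iii) identifying the leading-order expansion. For (ii), note $\partial_\lambda L(\lambda_0)\phi = (\partial_\lambda A(\lambda_0))v_{k_0}\cos t$, and projecting onto $\phi$ gives the coefficient $\langle \partial_\lambda A(\lambda_0)v_{k_0},v_{k_0}\rangle_{L^2}$, up to normalization; by standard perturbation theory for simple eigenvalues this equals $\mu_{k_0}'(\lambda_0)$, which is strictly negative by Proposition~\ref{prop:B}, hence nonzero. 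This is the step I expect to carry the real content: one must make sure the $w$-contribution does not spoil the transversality, but since $w=\mathcal{O}_s(b^2)$ and $\partial_\lambda w(0,\lambda)=0$ (because $w(0,\lambda)=0$ for all $\lambda$), the mixed derivative $D_\lambda D_v F(0,\lambda_0)$ is unaffected by the nonlinear correction. With (i)--(iii) in hand, Crandall--Rabinowitz yields a $C^1$ (analytic) curve $b\mapsto (v(b),\lambda(b))$ with $v(0)=0$, $\lambda(0)=\lambda_0$, $v'(0)=\phi$, of solutions of $F=0$, and these are the only nontrivial zeros near $(0,\lambda_0)$.

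Finally I would translate this curve back through the Lyapunov--Schmidt reduction: the full perturbation is $u(b)=v(b)+w(v(b),\lambda(b)) = b\phi + \mathcal{O}_s(b^2)$ in $H^s_{sym}$, and since $s\geq 3$ gives the embedding $H^s_{sym}\subset C^2_{sym}$, we obtain $u(x,t)=b(\cos t)v_{k_0}(x)+\mathcal{O}_{C^2_{sym}}(b^2)$ with $\lambda=\lambda_0+\mathcal{O}(b)$, which is exactly \eqref{u}. The expansion $\lambda=\lambda_0+\mathcal{O}(b)$ (rather than $\mathcal{O}(b^2)$) is what the generic Crandall--Rabinowitz theorem delivers; a sharper statement would require computing the quadratic coefficient of $F$, which we do not need here. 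The only mild subtlety is reparametrizing to $b\in[0,b_0)$: since the equation and kernel are invariant under $b\mapsto -b$ composed with $t\mapsto t+\pi$ (or the symmetry already recorded), restricting to $b\geq 0$ loses no solutions, and $b_0$ is chosen small enough that both the contraction producing $w$ and the Crandall--Rabinowitz curve are valid.
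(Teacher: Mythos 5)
Your overall strategy coincides with the paper's: a Lyapunov--Schmidt reduction followed by Crandall--Rabinowitz applied to the one-dimensional bifurcation equation \eqref{EQB}, using $w(0,\lambda)=0$ and $Dg(0)=0$ to see that the range correction pollutes neither the linearization at the trivial branch nor the mixed derivative, and then pushing the curve back through $u=v+w(v,\lambda)$ with $w=\mathcal{O}_{H_{sym}^{s}}(v^{2})$ and the embedding $H_{sym}^{s}\subset C_{sym}^{2}$ for $s\geq 3$. All of that matches the paper's proof.

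The one step that does not hold as written is your verification of the transversality hypothesis. Identifying $\left\langle \partial_{\lambda}A(\lambda_{0})v_{k_{0}},v_{k_{0}}\right\rangle_{L^{2}}$ with $\mu_{k_{0}}'(\lambda_{0})$ is fine (Hellmann--Feynman for a simple eigenvalue), but you then conclude that this derivative is \emph{strictly} negative ``by Proposition~\ref{prop:B}''. Proposition~\ref{prop:B} only asserts that $\lambda\mapsto\mu_{k}(\lambda)$ is strictly decreasing, which gives $\mu_{k_{0}}'(\lambda_{0})\leq 0$ and not $\mu_{k_{0}}'(\lambda_{0})<0$: a strictly decreasing differentiable function may have a vanishing derivative at isolated points, and transversality would fail exactly there. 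The paper closes this by a direct pointwise computation rather than by citing monotonicity of the eigenvalues: since $-\partial_{\lambda}u_{\lambda}\geq 0$ and $\lambda_0>0$,
\begin{equation*}
\partial_{\lambda}\left(\frac{\lambda}{\left(1+u_{\lambda}\right)^{3}}\right)=\frac{1}{\left(1+u_{\lambda}\right)^{3}}-3\,\partial_{\lambda}u_{\lambda}\,\frac{\lambda}{\left(1+u_{\lambda}\right)^{4}}\geq\frac{1}{\left(1+u_{\lambda}\right)^{3}}>0,
\end{equation*}
so $\int_{-\pi/2}^{\pi/2}\partial_{\lambda}\left(\lambda\left(1+u_{\lambda}\right)^{-3}\right)v_{k_{0}}^{2}\,dx\neq 0$ and the pairing cannot vanish. (Equivalently, the quantitative content of the proof of Proposition~\ref{prop:B} --- the perturbing potential is bounded below by a positive multiple of the identity --- yields a negative upper bound on the difference quotients of $\mu_{k_0}$ and hence $\mu_{k_{0}}'(\lambda_{0})<0$.) With that repair your argument is complete and is essentially the paper's proof.
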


\begin{proof}
The estimate%
\begin{equation*}
v=b\left( \cos t\right) v_{k_{0}}(x)+\mathcal{O}(b^{2})~,\qquad
\lambda=\lambda_{0}+\mathcal{O}(b), 
\end{equation*}
is a consequence of the Crandall-Rabinowitz theorem. Thus the result follows
from the Lyapunov-Schmidt reduction and the fact that $w(v;\lambda )=%
\mathcal{O}_{H_{sym}^{s}}(v^{2})$ with $H_{sym}^{s}\subset C_{sym}^{2}$ for $%
s\geq3$. To apply the Crandall-Rabinowitz theorem, we only need to verify
that $\partial_{\lambda}L(\lambda_{0})\left( \cos t\right) v_{k_{0}}(x)$ is
not in the range of $L(\lambda_{0})$,%
\begin{equation*}
\left\langle \partial_{\lambda}L(\lambda_{0})\left( \cos t\right)
v_{k_{0}}(x),\left( \cos t\right) v_{k_{0}}(x)\right\rangle _{L^{2}}\neq0%
\text{.} 
\end{equation*}
This condition is equivalent to 
\begin{equation*}
\int_{-\pi/2}^{\pi/2}\left( \partial_{\lambda}\frac{\lambda}{\left(
1+u_{\lambda}\right) ^{3}}\right)
_{\lambda=\lambda_{0}}v_{k_{0}}^{2}(x)dx\neq0\text{,} 
\end{equation*}
which follows from the fact that $u_{\lambda}$ is decreasing in $\lambda$.
That is, we have $-\partial_{\lambda}u_{\lambda}\geq0$ and%
\begin{equation*}
\left( \partial_{\lambda}\frac{\lambda}{\left( 1+u_{\lambda}\right) ^{3}}%
\right) _{\lambda=\lambda_{0}}=\frac{1}{\left( 1+u_{\lambda}\right) ^{3}}%
-\partial_{\lambda}u_{\lambda}\frac{\lambda}{\left( 1+u_{\lambda}\right) ^{4}%
}>0. 
\end{equation*}

\end{proof}

\subsection{Proof of Theorem \protect\ref{thm:main}}

The proof of Theorem \ref{thm:main} is a consequence of Proposition \ref%
{pro:main} and the fact that 
\begin{equation*}
U(t,x)=u_{\lambda}(x)+u(pt/q,x). 
\end{equation*}
It only remains to obtain the symmetries of the local bifurcations. This is
a consequence of the following proposition.

\begin{proposition}
The bifurcation (\ref{u}) has the symmetries $u(t,x)=u(t,-x)$ for $k$ odd
and $u(t,x)=u(t+\pi,-x)$ for $k$ even.
\end{proposition}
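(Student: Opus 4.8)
The plan is to obtain the claimed symmetries from uniqueness statements in the Lyapunov--Schmidt reduction together with the symmetry properties of the linear problem already established in the excerpt. First I would recall that, at a non-resonant value $\lambda_0$ with $\mu_{1,k}(\lambda_0)=0$, the kernel of $L(\lambda_0,p/q)$ is spanned by $\cos(t)\,v_k(x;\lambda_0)$, and that $v_k(x;\lambda_0)=(-1)^{k+1}v_k(-x;\lambda_0)$ by the proposition on eigenfunction parity. Hence the leading-order term $b\cos(t)v_{k}(x)$ already satisfies $u(t,x)=u(t,-x)$ when $k$ is odd, and $u(t,x)=u(t+\pi,-x)$ when $k$ is even (since $\cos(t+\pi)=-\cos t$ cancels the sign $(-1)^{k+1}=-1$). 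The task is to propagate this to the full solution $u=v+w(v,\lambda)$, i.e.\ to the higher-order corrections.

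Next I would introduce the relevant linear involutions on $H^s_{sym}$: the reflection $(Ru)(t,x)=u(t,-x)$ and, for $k$ even, the shifted reflection $(R_\pi u)(t,x)=u(t+\pi,-x)$. The key observation is that the nonlinear operator in \eqref{EQ}, $u\mapsto L(\lambda,p/q)u+g(u)$, is equivariant under $R$ (respectively $R_\pi$): indeed $A(\lambda)$ commutes with $R$ because $u_\lambda(x)$ is even in $x$, $\partial_t^2$ commutes with both $R$ and $R_\pi$, and $g(u;\lambda)$ in \eqref{g} is a pointwise (Nemytskii) operator built from $u_\lambda(x)$ and $u$, hence commutes with $R$; for $R_\pi$ one uses in addition that $\partial_t^2$ is invariant under the time shift $t\mapsto t+\pi$. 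I would then check that $R$ (resp.\ $R_\pi$) preserves the splitting $H^s_{sym}=QH^s_{sym}\oplus PH^s_{sym}$: on the kernel it acts by $+1$ for $k$ odd (resp.\ on $\cos(t)v_k$ by $+1$ after the shift for $k$ even), and it commutes with the projections $P,Q$ because $\mathcal N(\lambda_0,p/q)=\{(1,k)\}$ is a single symmetric mode and $R$ permutes eigenfunctions $\cos(jt)v_k$ among themselves with a sign.

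With equivariance in hand, the argument is the standard one: if $w(v,\lambda)$ solves the range equation $PLPw+Pg(v+w)=0$, then applying $R$ (resp.\ $R_\pi$) and using that $v=b\cos(t)v_k$ is fixed by the involution shows that $R\,w(v,\lambda)$ (resp.\ $R_\pi w(v,\lambda)$) is also a solution of the range equation in the same small ball $P\mathcal B_r$; by the uniqueness of the fixed point of the contraction $K$, we get $R\,w(v,\lambda)=w(v,\lambda)$ (resp.\ $R_\pi w=w$). Therefore $u=v+w$ inherits the symmetry, which, combined with the evenness in $t$ built into $H^s_{sym}$, gives $u(t,x)=u(-t,x)=u(t,-x)$ for $k$ odd and $u(t,x)=u(-t,x)=u(t+\pi,-x)$ for $k$ even. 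Finally, translating back via $U(t,x)=u_{\lambda_0}(x)+u(pt/q,x)$ and using that $u_{\lambda_0}$ is even in $x$ yields the symmetries in the statement of Theorem~\ref{thm:main}, with the time shift $t\mapsto t+\pi$ in the variable $u$ becoming the shift $t\mapsto t+p\pi/q$ in the variable $U$. The only point requiring a little care — and the main obstacle — is verifying that the shifted reflection $R_\pi$ genuinely commutes with $P$ and $Q$ in the $k$ even case; this reduces to checking that $R_\pi$ maps each eigenfunction $\cos(jt)v_k(x;\lambda_0)$ to $\pm\cos(jt)v_k(x;\lambda_0)$, which follows from $\cos(j(t+\pi))=(-1)^j\cos(jt)$ and the parity of $v_k$, so that $\mathcal N$ and its complement are each $R_\pi$-invariant.
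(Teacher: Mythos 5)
Your proposal is correct and follows essentially the same route as the paper: both exploit the equivariance of $L(\lambda)u+g(u)=0$ under the spatial reflection and the half-period time shift, observe that the kernel mode $\cos(t)v_k$ is fixed by the appropriate involution ($\kappa_1$ for $k$ odd, $\kappa_1\kappa_2$ for $k$ even), and conclude that the bifurcating solution inherits the symmetry. You merely make explicit the step the paper leaves implicit, namely that the involutions commute with $P$ and $Q$ and that uniqueness of the fixed point of the contraction forces $w(v,\lambda)$ to lie in the fixed subspace.
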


\begin{proof}
Since $u_{\lambda}(x)$ is even, the equation%
\begin{equation*}
L(\lambda)u+g(u)=0 
\end{equation*}
is equivariant under the action of the group $(\kappa_{1},\kappa_{2})\in%
\mathbb{Z}_{2}\times\mathbb{Z}_{2}$ in $u(t,x)\in H_{sym}^{s}$ given by%
\begin{equation*}
\kappa_{1}u(t,x)=u(t,-x),\qquad\kappa_{2}u(t,x)=u(t+\pi,x)\text{.} 
\end{equation*}
Since 
\begin{equation*}
u(t,x)=\sum_{(j,k)\in\mathbb{N}\times\mathbb{N}^{+}}u_{j,k}~\cos (jt)v_{k}(x)%
\text{,} 
\end{equation*}
where $v_{k}(-x)=(-1)^{k+1}v(x)$ and $\cos j(t+\pi)=\left( -1\right)
^{j}\cos jt$, the actions of $\kappa_{1}$ and $\kappa_{2}$ in the components 
$u_{j,k}\in\mathbb{R}$ are given by $\kappa_{1}u_{j,k}=\left( -1\right)
^{k+1}u_{j,k}$ and $\kappa_{2}u_{j,k}=(-1)^{j}u_{j,k}$. In particular, for $%
j=1$ we have 
\begin{equation*}
\kappa_{1}u_{1,k}=\left( -1\right)
^{k+1}u_{1,k},\qquad\kappa_{2}u_{1,k}=-u_{1,k}~. 
\end{equation*}
Then $u_{1,k}\in\ker L(\lambda_{0})$ is fixed by the action of $\kappa_{1}$
if $k$ is odd and by $\kappa_{1}\kappa_{2}$ if $k$ is even. The result
follows from the fact that $u$ is fixed by the action of $\kappa_{1}$ if it
satisfies that 
\begin{equation*}
u(t,x)=\kappa_{1}u(t,x)=u(t,-x)\text{,} 
\end{equation*}
and by $\kappa_{1}\kappa_{2}$ if%
\begin{equation*}
u(t,x)=\kappa_{1}\kappa_{2}u(t,x)=u(t+\pi,-x). 
\end{equation*}
\end{proof}

\begin{remark}
\label{R1}Since the equation $L(\lambda,p/q)u+g(u;\lambda)=0$ has a gradient
structure and the eigenvalues of $L(\lambda)$ cross zero in the same
direction, because they are decreasing in $\lambda$, then one can use Conley
index to prove that every value $\lambda_{0}$ where the kernel of $%
L(\lambda_{0})$ is not trivial is a bifurcation point \cite{HKiel}.
Therefore, for every $p/q\in B_{k}$ there is a (possibly resonant)
bifurcation value $\lambda_{0}$ such that $\mu_{j,k}(\lambda_{0};p/q)=0$.
Thus the set of bifurcation values $\lambda_{0}$ is dense when considering
all possible values of $p$ and $q$. However, the result using Conley index
does not guarantee that the bifurcation is a continuum satisfying estimates (%
\ref{u}).
\end{remark}

\section{Numerical computation of branches of periodic solutions} \label{sec:numerical_computations}
In this section we compute numerically the steady states, the spectrum of the elliptic operator and the periodic solutions of the scaled equation \eqref{eq:MEMS}. This is done in Sections~\ref{sec:cont_steady-states}, \ref{sec:cont_eigenpairs} and \ref{sec:cont_periodic_solutions}, respectively. For each of these problems, we introduce an infinite dimensional zero-finding problem whose solutions correspond to the wanted objects of interest. Then, a standard predictor-corrector numerical continuation method is applied to finite dimensional projections of each problem. The reason of introducing first the infinite dimensional formulation of the problems is twofold. First, it matches the formulation of Section~\ref{sec:cap} involved in the rigorous computation of the saddle-node bifurcation. Second, it would allow to have the proper formulation for possibly doing rigorous computations of periodic solutions in the future. 

\subsection{Continuation of the branch of steady states} \label{sec:cont_steady-states}

The steady states of the MEMS equation \eqref{eq:MEMS} satisfy the nonlinear boundary value problem
\begin{equation} \label{eq:steady_states_equation}
U_{yy} = \frac{\lambda}{\left(1+U\right)^{2}}, \quad U(-1) = U(1) = 0.
\end{equation}
To compute the solutions of \eqref{eq:steady_states_equation}, we first transform the  equation into a differential equation with polynomial nonlinearities. 
Letting $u_1 \bydef U$, $u_2 \bydef U_y = u_1'$ and $u_3 \bydef \frac{1}{1+u_1}$ yields that
\begin{align*}
u_1' & = u_2 \\
u_2' & = U_{yy} = \frac{\lambda}{\left(1+U \right)^{2}} = \lambda u_3^2 \\
u_3' & = -\frac{1}{(1+u_1)^2} u_1' = -u_2 u_3^2.
\end{align*}
The boundary conditions $u_1(-1)=u_1(1)=0$ are appended. To fix the right condition for $u_3$, we impose that 
$u_3(-1) = \frac{1}{1+u_1(-1)} = 1$. 
The problem of computing a solution $U(y)$ of the nonlinear non-polynomial equation \eqref{eq:steady_states_equation} is then transformed into the polynomial boundary value problem
\begin{equation} \label{eq:BVP}
\begin{pmatrix} u_1' \\ u_2' \\ u_3' \end{pmatrix}
=
\begin{pmatrix} u_2 \\ \lambda u_3^2 \\ -u_2 u_3^2 \end{pmatrix},
\qquad
\begin{pmatrix} u_1(-1) \\ u_2(-1) \\ u_3(-1) \end{pmatrix} = \begin{pmatrix} 0 \\ \delta \\ 1 \end{pmatrix}, \quad u_1(1)=0,
\end{equation}
where $\delta$ is the unknown initial velocity $U'(-1)$ which we will solve for. We expand solutions with Chebyshev series
\[
u_j(y) = (a_j)_0 + 2 \sum_{k\ge 0} (a_j)_n T_n(y), \quad j=1,2,3,
\]
where $T_n:[-1,1] \to \R$ ($n\ge0$) are the Chebyshev polynomials.

Denote by $u=(u_1,u_2,u_3)$ and $\Psi(u)\in \R^3$ the right-hand side of the polynomial differential equation given in \eqref{eq:BVP}.
Denote $a_j=((a_j)_n)_{n\ge 0}$ for $j=1,2,3$, and $a=(a_1,a_2,a_3)$.
For each $j=1,2,3$, the Chebyshev expansion of $\Psi_j(u( \cdot)) : [-1,1] \to \R$ is given by
\[
\Psi_j(u(y)) = (c_j)_0 + 2 \sum_{n\ge 0} (c_j)_n T_n(y), \quad j=1,2,3, \quad y \in [-1,1],
\]
where
\begin{equation} \label{eq:c_j}
\begin{pmatrix} c_1 \\ c_2 \\ c_3 \end{pmatrix}
=
\begin{pmatrix} c_1(a) \\ c_2(\lambda,a) \\ c_3(a) \end{pmatrix}
\bydef
\begin{pmatrix} a_2 \\ \lambda a_3^2 \\ - a_2 a_3^2 \end{pmatrix},
\end{equation}
where $a_3^2 = a_3*a_3$ and $a_2 a_3^2 = a_2*a_3*a_3$ are standard discrete convolutions.

For $j=1,2,3$, let
\begin{equation} \label{eq:IVP_f_operator}
(f^{(\rm eq)}_j(\lambda,\delta,a))_n \bydef \left\{
\begin{aligned}
(a_j)_{0}+2\sum_{\ell=1}^{\infty}(-1)^{\ell}(a_j)_{\ell} - \alpha_j, ~~ & n =0,
\\
 2n (a_j)_n + (c_j(a))_{n+1}-(c_j(a))_{n-1} , ~~& n \ge 1,
\end{aligned}
\right.
\end{equation}
where 
\begin{equation}
\alpha_j \bydef \begin{cases} 0, & j =1 \\ \delta, & j =2 \\ 1, & j =3 \end{cases}.
\end{equation}
Setting $f^{(\rm eq)}_j=((f^{(\rm eq)}_j)_n)_{n \ge 0}$ and $\eta(a_1) = (a_1)_{0}+2\sum_{\ell=1}^{\infty} (a_1)_{\ell}$ (this is the Chebyshev expansion of the extra condition $u_1(1)=0$), the resulting map to solve in the space of Chebyshev coefficients is given by
\begin{equation} \label{eq:f=0_equation}
f^{(\rm eq)}(\lambda,\delta,a) \bydef \begin{pmatrix} \eta(a_1) \\ f^{(\rm eq)}_1(a) \\ f^{(\rm eq)}_2(\lambda,\delta,a) \\ f^{(\rm eq)}_3(a) \end{pmatrix}.
\end{equation}

Define the operators (acting on Chebyshev sequences) by
\begin{equation} \label{eq:tridiagonal_T}
T \bydef
\begin{pmatrix} 
0&0&0&0&0&\cdots\\
-1&0&1&0&\cdots&\ \\
0&-1&0&1&0&\cdots\\
\ &\ddots&\ddots&\ddots&\ddots&\ddots\\
\ &\dots&0&-1&0&1 \\
\ &\ &\dots&\ddots&\ddots&\ddots
\end{pmatrix},
\end{equation}
and
\begin{equation} \label{eq:Lambda}
\Lambda  \bydef
\begin{pmatrix} 
0&0&0&0&0&\cdots\\
0&2&0&0&\cdots&\ \\
0&0&4&0&0&\cdots\\
\ &\ddots&\ddots&\ddots&\ddots&\ddots\\
\ &\dots&0&0&2\ell&0 \\
\ &\ &\dots&\ddots&\ddots&\ddots
\end{pmatrix}.
\end{equation}
%
%
Using the above operators, we may write for the cases $n>0$ 
\[
(f^{(\rm eq)}_j(\lambda,\delta,a))_n  = \left( \Lambda a_j + T c_j(a) \right)_n =  2n (a_j)_n + (c_j(a))_{n+1}-(c_j(a))_{n-1}.
\]
Hence, for $j=1,2,3$,
\[
(f^{(\rm eq)}_j(\lambda,\delta,a))_n \bydef 
\begin{cases}
\displaystyle
(a_j)_{0}+2\sum_{\ell=1}^{\infty}(-1)^{\ell}(a_j)_{\ell} - \alpha_j, ~~ & n =0,\\
\displaystyle
\left( \Lambda a_j + T c_j(a) \right)_n, & n > 0.
\end{cases}
\]

By construction, computing solutions to the nonlinear BVP \eqref{eq:steady_states_equation}
(that is computing equilibria of the MEMS equation) boils down to computing simultaneously $\lambda,\delta,a$ such that $f^{(\rm eq)}(\lambda,\delta,a) =0$ where $f^{(\rm eq)}$ is defined in \eqref{eq:f=0_equation}. Letting $x \bydef (\delta,a)$ we can compute branches of steady states by applying a parameter continuation method (that is a predictor corrector algorithm, see \cite{MR910499}) to a finite dimensional projection of the problem $f^{(\rm eq)}(\lambda,x)=0$, where $\lambda$ is a continuation parameter. 
Figure~\ref{fig:branches_po_k_1_q11} contains the image of the stable branch of steady states (black branch) computed numerically using the presented method.

\subsection{Continuation of eigenfunctions and eigenvalues} \label{sec:cont_eigenpairs}

The eigenfunctions and eigenvalues of the linearized problem can be computed similarly. The  eigenfunctions and eigenvalues are needed in order to find the initial predictor to compute numerically the branches of periodic solutions. The eigenvalue problem associated to \eqref{eq:steady_states_equation} is given by 
\begin{align*} 
- U_{yy} + \frac{\lambda}{\left(1+U\right)^{2}} & = 0, \quad U(-1) = U(1) = 0
\\
-V_{yy} - \frac{2 \lambda}{\left(1+U\right)^{3}} V - \mu V &= 0, \quad V(-1) = V(1) = 0,
\end{align*}
where $(\mu,V)$ is an eigenvalue-eigenvector couple associated to the linearization of the MEMS equation \eqref{eq:MEMS} about the steady state solution $U$.

Letting $u_1 = U$, $u_2 = U_y = u_1'$, $u_3 = \frac{1}{1+u_1}$, $u_4 = V$ and $u_5 = V_y$ yields the system
\begin{equation} \label{eq:eig_BVP}
\begin{pmatrix} u_1' \\ u_2' \\ u_3' \\ u_4' \\ u_5' \end{pmatrix}
=
\begin{pmatrix} u_2 \\ \lambda u_3^2 \\ -u_2 u_3^2 \\ u_5 \\ -2 \lambda u_3^2 u_4 - \mu u_4 \end{pmatrix},
\quad
\begin{pmatrix} u_1(-1) \\ u_2(-1) \\ u_3(-1) \\ u_4(-1) \\ u_5(-1) \end{pmatrix} = \begin{pmatrix} 0 \\ \delta_1 \\ 1 \\ 0 \\ \delta_2 \end{pmatrix}, \quad 
\begin{pmatrix} u_1(1) \\ u_4(1) \end{pmatrix} = \begin{pmatrix} 0 \\ 0 \end{pmatrix},
\end{equation}
where $\delta_1$ and $\delta_2$ are to be (uniquely) determined. Denote $\delta = (\delta_1,\delta_2)$ and 
$u=(u_1,\dots,u_5)$. The unknown variables in the polynomial boundary value problem \eqref{eq:eig_BVP} are 
$(\delta,\mu,u)$. An extra phase condition (that is one which fixes the length of the eigenvector $u_4=V$) will be imposed to isolate the solutions (and therefore allowing the use of Newton's method).

We solve the eigenvalue problem \eqref{eq:eig_BVP} using Chebyshev series expansion, similarly to the BVP \eqref{eq:BVP}. We expand solutions with Chebyshev series
\begin{align*}
u_j(y) &= (a_j)_0 + 2 \sum_{k\ge 0} (a_j)_n T_n(y), \quad j=1,\dots,5.
\end{align*}
Denote by $\Psi(u)\in \R^5$ the right-hand side of the polynomial differential equation given in \eqref{eq:eig_BVP}.
Denote $a_j=((a_j)_n)_{n\ge 0}$ for $j=1,\dots,5$ and $a=(a_1,\dots,a_5)$.
Assume that the Chebyshev expansion of $\Psi(u(y))$ is given by
\[
\Psi(u(y))_j = (c_j)_0 + 2 \sum_{n\ge 0} (c_j)_n T_n(y), \quad j=1,\dots,5 ,
\]
where $c=c(a) = (c_1,\dots,c_5)$ is given component-wise by
\[
\begin{pmatrix} c_1 \\ c_2 \\ c_3 \\ c_4 \\ c_5 \end{pmatrix}
\bydef
\begin{pmatrix} a_2 \\ \lambda a_3^2 \\ - a_2 a_3^2 \\ a_5 \\ -2 \lambda a_3^2 a_4 - \mu a_4 \end{pmatrix},
\]
where $a_3^2 = a_3*a_3$, $a_2 a_3^2 = a_2*a_3*a_3$ and $a_3^2 a_4 = a_3*a_3*a_3*a_4$ are discrete convolutions.

Denote $x \bydef (\delta,\mu,a)$. 
%
For $j=1,\dots,5$, let
\begin{equation} \label{eq:IVP_g_operator}
(g_j)_n(x) \bydef \left\{
\begin{aligned}
(a_j)_{0}+2\sum_{\ell=1}^{\infty}(-1)^{\ell}(a_j)_{\ell} - \alpha_j, ~~ & n =0,
\\
 2n (a_j)_n + (c_j)_{n+1}-(c_j)_{n-1} , ~~& n \ge 1,
\end{aligned}
\right.
\end{equation}
where 
\begin{equation}
\alpha_j \bydef \begin{cases} 0, & j =1 \\ \delta_1, & j =2 \\ 1, & j =3 \\ 0, & j =4 \\ \delta_2, & j = 5 \end{cases}.
\end{equation}
For $j=1,\dots,5$, we set $g_j \bydef ((g_j)_n)_{n \ge 0}$. Let
\begin{align*}
\eta_1(a_1) &\bydef (a_1)_{0}+2\sum_{n=1}^{\infty} (a_1)_{n}
\\
\eta_2(a_4) &\bydef (a_4)_{0}+2\sum_{n=1}^{\infty} (a_4)_{n}
\\
\eta_3(a_4) &\bydef l(a_4)-1,
\end{align*}
where $l$ is linear and acting as a phase condition for the eigenvector $V$ (by fixing its length). Set $\eta \bydef (\eta_1,\eta_2,\eta_3) \in \R^3$. The three extra conditions $\eta(a) = 0 \in \R^3$ are the extra conditions (in Chebyshev) enforcing that $u_1(1)=0$, $u_4(1)=0$ and that the eigenvector $u_4=V$ is locally isolated. The resulting map to solve in the space of  Chebyshev coefficients is given by
\begin{equation} \label{eq:f=0_equation_sn}
f^{(\rm lin)}(x,\lambda) \bydef \begin{pmatrix} \eta(a) \\ g(x,\lambda) \end{pmatrix}.
\end{equation}

We can then apply a standard predictor-corrector method to continue the eigenvalues $\mu_k(\lambda)$ for $k=1,2,3$ and for $\lambda \in [0,\lambda^*)$. Having fixed $k \in \{1,2,3\}$, we begin the continuation at $\lambda=0$ knowing theoretically that at the steady state $U=0$, the eigenvalues are given by $\mu=\mu_k(0) = \left( \frac{\pi k}{2} \right)^2$, $k \ge 0$ with corresponding eigenvectors given by 
\begin{equation} \label{eq:V_at_0}
V(y) = V_k(y) = 
\begin{cases}
\sin \left( \frac{k \pi y}{2} \right), & k \text{ even},
\\
\cos \left( \frac{k \pi y}{2} \right), & k \text{ odd}.
\end{cases}
\end{equation}
Hence, recalling that $u_1 = U$, $u_2 = U_y = u_1'$, $u_3 = \frac{1}{1+u_1}$, $u_4 = V$ and $u_5 = V_y$, we get that at $\lambda=0$, $u_1 \equiv 0$, $u_2 \equiv 0$, $u_3 \equiv 1$, $u_4(y) = V(y)$ as in \eqref{eq:V_at_0} and
\begin{equation} \label{eq:u5_lambda=0}
u_5(y) = V'(y)=V'_k(y) = 
\begin{cases}
\frac{k \pi}{2} \cos \left( \frac{k \pi y}{2} \right), & k \text{ even},
\\
- \frac{k \pi }{2} \sin \left( \frac{k \pi y}{2} \right), & k \text{ odd}.
\end{cases}
\end{equation}

\begin{figure}
\centering
\includegraphics[width=0.4\textwidth]{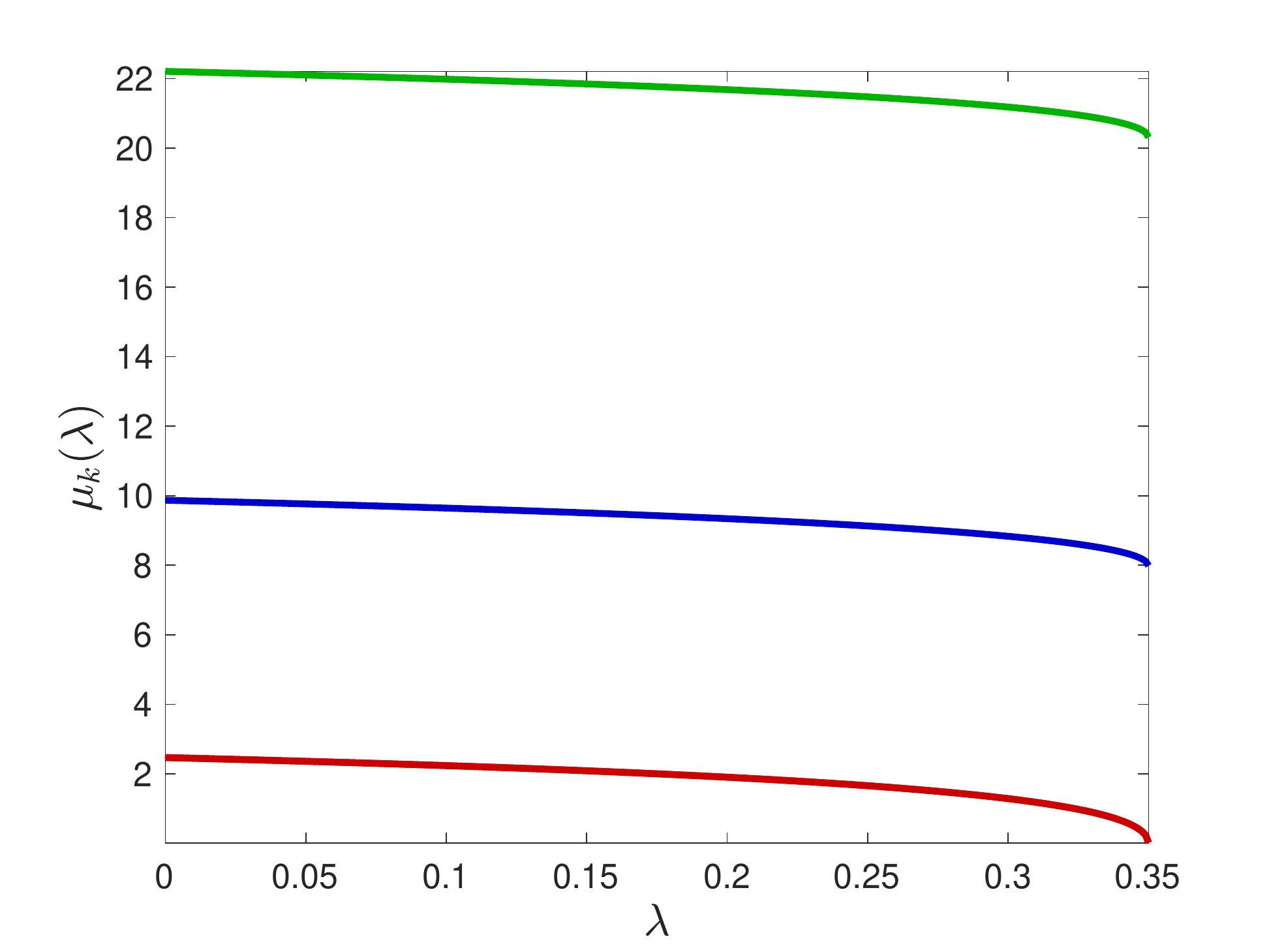}
\vspace{-.3cm}
\caption{Continuation of the eigenvalues $\mu_k(\lambda)$ for $k=1,2,3$, $\lambda \in [0,\lambda^*)$.}
\label{fig:eigs_continuation}
\end{figure}
Using these explicit formulas for $u_j(y)$, $j=1,\dots,5$, we compute the corresponding Chebyshev series expansions to obtain the sequences $\ba_1,\ba_2,\dots,\ba_5$. Note that the sequences $\ba_1=\ba_2=0$ and $(\ba_3)_n = \delta_{n,0}$, where $\delta_{i,j}$ is the Kronecker delta function. The computation of the Chebyshev coefficients $\ba_4$ and $\ba_5$ can be done analytically or using a numerical software. In our case, we use {\tt Chebfun} to compute $\ba_4$ and $\ba_5$. Moreover, we fix $\bar \delta = (\bar \delta_1,\bar \delta_2) = (u_2(-1),u_5(-1))=(0,u_5(-1))$, where $u_5(-1)$ is determined exactly using \eqref{eq:u5_lambda=0}. Letting $\ba=(\ba_1,\dots,\ba_5)$, $\bar \mu = \left( \frac{\pi k}{2} \right)^2$, we have an approximate solution $\bx = (\bar \delta,\bar \mu,\ba)$ which satisfies $f^{(\rm lin)}(\bx,0) \approx 0$. From that approximate solution at $\lambda=0$ at a given $k \in \{1,2,3\}$, we perform a predictor-corrector continuation method on a finite dimensional projection of \eqref{eq:f=0_equation_sn} to obtain a family of solutions of the form $\{ x^{(k)}(\lambda) : \lambda \in [0,\lambda^*) \}$. Denote the second component of $x^{(k)}(\lambda)$ by $\mu_k(\lambda)$, we obtain a branch of eigenvalues parameterized over $\lambda \in [0,\lambda^*)$. See Figure~\ref{fig:eigs_continuation} for a picture of the three branches $k=1,2,3$.

\subsection{Continuation of periodic solutions} \label{sec:cont_periodic_solutions}

The goal now is to compute periodic orbits $U=U(y,t)$ of \eqref{eq:MEMS}. We fix two relatively prime integer $(p,q)$ and fix a priori the frequency to be $\omega \bydef \frac{\pi p}{2q}$. We aim at computing $\frac{2 \pi}{\omega}$-periodic orbits of \eqref{eq:MEMS}.
First, let us transform the problem into a polynomial one. Letting $U_1(y,t) \bydef U(y,t)$, 
$U_2(y,t) \bydef U_y(y,t) = \frac{d}{dy} U_1(y,t)$ and $U_3(y,t) \bydef \frac{1}{1+U(y,t)} = \frac{1}{1+U_1(y,t)}$,
\begin{align*}
\frac{\partial}{\partial y} U_1 & = U_2 \\
\frac{\partial}{\partial y} U_2 & = U_{yy} = \frac{\lambda}{\left(1+U \right)^{2}} + U_{tt} = \lambda U_3^2 + \frac{\partial^2}{\partial t^2} U_1 \\
\frac{\partial}{\partial y} U_3 & = - \frac{1}{(1+U(y,t))^2} U_y(y,t) = -U_2 U_3^2.
\end{align*}
The boundary conditions $U_1(-1,t)=U_1(1,t)=0$ for all $t \in \R$ are appended. To fix the right condition for $U_3$, we impose that 
$U_3(-1,t) = \frac{1}{1+U(-1,t)} = 1$ for all $t \in \R$.

After rescaling time from $[0,\frac{2 \pi}{\omega}]$ to $[0,2\pi]$, the problem of computing a solution $U(y,t)$ of the nonlinear hyperbolic equation \eqref{eq:MEMS} is therefore transformed into finding a $2\pi$-periodic orbit of the polynomial boundary value problem
\begin{equation} \label{eq:BVP_PDE}
\frac{\partial}{\partial y}\begin{pmatrix} U_1 \\ U_2 \\ U_3 \end{pmatrix}
=
\begin{pmatrix} U_2 \\ \lambda U_3^2 + \omega^2 \frac{\partial^2 }{\partial t^2}U_1 \\ -U_2 U_3^2 \end{pmatrix},
\quad
\begin{pmatrix} U_1(-1,t) \\ U_2(-1,t) \\ U_3(-1,t) \end{pmatrix} = \begin{pmatrix} 0 \\ \delta(t) \\ 1 \end{pmatrix}, \quad U_1(1,t)=0, 
\end{equation}
for all $t \in \R$, where $\delta(t)$ is the apriori unknown initial velocity $U_y(-1,t)$ which we will solve for. 
We look for periodic orbits which are even in time. We expand $U_j$ ($j=1,2,3$) and $\delta$ in the form
\begin{align} \label{eq:cheb_fourier_expansion_U}
U_j(y,t) &= \sum_{n \ge 0 \atop k \ge 0} (a_j)_{n,k} m_{n,k} T_{n}(y) \cos(kt)  = 
\sum_{(n,k)\in \Z^2} (a_j)_{n,k} e^{i(n \theta +kt)} \\
\delta(t) &= \sum_{k \ge 0} \delta_{k} m_{0,k} \cos(kt)  = 
\sum_{ k\in \Z} \delta_{k} e^{ikt}
\end{align}
where $\theta \bydef \cos^{-1}(y)$,
\[
m_{n,k} \bydef 
\begin{cases} 
1, & n = k =0 \\ 
2 & n = 0 \text{ and } k>0 \\ 
2 & n > 0 \text{ and } k=0 \\ 
4 & n \ne 0 \text{ and } k \ne 0,
\end{cases} 
\]
and where $(a_j)_{n,k} = (a_1)_{|n|,|k|}$ and $\delta_{n,k} = \delta_{|n|,|k|}$ for $(n,k)\in \Z^2$. Denote $a_1=((a_1)_{n,k})_{n,k\ge 0}$, $a_2=((a_2)_{n,k})_{n,k\ge 0}$, $a_3=((a_3)_{n,k})_{n,k\ge 0}$ and $\delta =(\delta_{k})_{k\ge 0}$.
The unknowns are then given by
\[
x \bydef (\delta,a_1,a_2,a_3).
\]

We integrate the BVP \eqref{eq:BVP_PDE} in $x$ to get the integral formulation 
\begin{equation} \label{eq:integral_PDE}
\begin{pmatrix} U_1(y,t) \\ U_2(y,t) \\ U_3(y,t) \end{pmatrix}
=
\begin{pmatrix} 0 \\ \delta(t) \\ 1 \end{pmatrix}+
\int_{-1}^y
\begin{pmatrix} U_2(\xi,t) \\ \lambda U_3^2(\xi,t) + \omega^2 \frac{\partial^2 }{\partial^2 t}U_1(\xi,t) \\ -U_2(\xi,t) U_3(\xi,t)^2 \end{pmatrix} d \xi, \quad y \in [-1,1]
\end{equation}
supplemented with the boundary condition $U_1(1,t)=0$. 
We denote by $\Psi(U_1,U_2,U_3)$ the right-hand side of the polynomial problem \eqref{eq:BVP_PDE}. That is, 
\[
\Psi
\begin{pmatrix} U_1(y,t) \\ U_2(y,t) \\ U_3(y,t) \end{pmatrix} \bydef
\begin{pmatrix} U_2 \\ \lambda U_3^2 + \omega^2 \frac{\partial^2 }{\partial t^2}U_1 \\ -U_2 U_3^2 \end{pmatrix},
\]
and for $j=1,2,3$, we have the expansion
\[
\Psi_j\begin{pmatrix} U_1(y,t) \\ U_2(y,t) \\ U_3(y,t) \end{pmatrix} =  \sum_{n \ge 0 \atop k \ge 0} (c_j)_{n,k} m_{n,k} T_{n}(y) \cos(kt)  = 
\sum_{(n,k)\in \Z^2} (c_j)_{n,k} e^{i(n \theta +kt)},
\]
where the terms $c_1$, $c_2$ and $c_3$ involve discrete convolution terms. Explicitly, 
\begin{align*}
(c_1)_{n,k} & = (a_2)_{n,k} ,
\\
(c_2)_{n,k} & = \lambda (a_3^2)_{n,k} - \omega^2 k^2 (a_1)_{n,k} ,
\\
(c_3)_{n,k} & = - (a_2 a_3^2)_{n,k},
\end{align*}
where $a_3^2 = a_3*a_3$ and $a_2 a_3^2 = a_2*a_3*a_3$ are standard two-dimensional discrete convolutions;
for instance,
\[
(a*b)_{n,k} = \sum_{{n_1+n_2=n \atop k_1+k_2=k} \atop k_i,n_i \in \Z} a_{n_1,k_1} b_{n_2,k_2}.
\]

For $j=1,2,3$, let
\begin{equation} \label{eq:IVP_g_operator_PO}
(g_j)_{n,k}(x) \bydef \left\{
\begin{aligned}
(a_j)_{0,k}+2\sum_{\ell=1}^{\infty}(-1)^{\ell}(a_j)_{\ell,k} - (\alpha_j)_k, ~~ & n =0,
\\
 2n (a_j)_{n,k} + (c_j)_{n+1,k}-(c_j)_{n-1,k} , ~~& n \ge 1,
\end{aligned}
\right.
\end{equation}
where 
\begin{equation}
(\alpha_j)_k \bydef \begin{cases} 0, & j =1 \\ \delta_k, & j =2 \\ \hat 1_k, & j =3 . \end{cases}
\end{equation}
Setting $g_j=((g_j)_{n,k})_{n,k \ge 0}$ and 
\[
\eta_k(a_1) \bydef (a_1)_{0,k} + 2 \sum_{n \ge 1}^{\infty} (a_1)_{n,k} = 0,
\]
which is the Fourier-Chebyshev expansion of the extra condition $U_1(1,t)=0$ for all $t$. The resulting map to solve in Fourier-Chebyshev coefficients space is given by
\begin{equation} \label{eq:f_po}
f^{(\rm per)}(x,\lambda) \bydef \begin{pmatrix} \eta(a_1) \\ g_1(x) \\ g_2(x,\lambda) \\ g_3(x) \end{pmatrix}.
\end{equation}

Having identified a map whose zeros correspond to periodic orbits, we wish to
compute (once more) branches of solutions (that is of periodic orbits) using a
continuation method. The continuation requires first providing an initial
point. Fix $k\geq1$. For each rational $\lambda_{0}$ such that $\mu
_{k}(\lambda_{0})=\omega^{2}$ with $\omega=\frac{\pi p}{2q}$, there is a local
continuum of $2\pi$-periodic solution bifurcating from the steady solution
$u_{\lambda}(y)$. The initial periodic orbit (that is the predictor) is given by
\[
\hat{u}(t,y)\bydef u_{\lambda_{0}}(y)+b\cos\left(  t\right)  v_{k}\left(
y;\lambda_{0}\right)  ,
\]
for a small $b$. After having transformed this initial point as a sequence of
Fourier-Chebyshev coefficients, we initiate the pseudo-arclength continuation
(e.g. see \cite{MR910499}) on a finite dimensional projection of the map $f^{(\mathrm{per)}}$ defined in
\eqref{eq:f_po}. Using that approach we performed several branch continuation,
which are portrayed in Figures~\ref{fig:branches_po_k_1_q11}
and~\ref{fig:po_k1_q_11} for $k=1$ and $q=11$, and in
Figures ~\ref{fig:branches_po_k_2_q47} and~\ref{fig:po_k2_q_47} for $k=2$ and $q=47$.

\section{Rigorous computation of the saddle-node bifurcation} \label{sec:cap}

In this section, we prove Theorem \ref{thm:saddle-node}. The proof of the theorem is computer-assisted and is based on the successful verification of the contraction mapping theorem of Newton-like operator acting on a ball of radius $r=5.7 \times 10^{-12}$ centered at a numerical approximation of a carefully chosen map defined on a Banach space $X$ of fast decaying Chebyshev coefficients (the {\em saddle-node map} as defined in \eqref{eq:F=0_saddle-node}). 

To define the space $X$ we require first to define the sequence space
\begin{equation} \label{eq:ell_nu_1}
\ell_\nu^1 \bydef \left\{ \alpha = (\alpha_n)_{n \ge 0} : \|\alpha\|_{\nu} = |\alpha_0| + 2 \sum_{n \ge 1} |\alpha_n| \nu^n < 0 \right\},
\end{equation}
for some fixed number $\nu \ge 1$. An important property of $\ell_\nu^1$ is that it is a Banach algebra under discrete convolutions, that is $\| a*b\|_{\nu} \le \|a\|_{\nu} \|b\|_{\nu}$ for all $a,b \in \ell_\nu^1$. This is  useful to perform the necessary estimates to analyze the nonlinear map . 
We also denote  by
\begin{equation} \label{eq:tilde_ell_nu_1}
\tilde \ell_\nu^1 \bydef \left\{ \alpha = (\alpha_n)_{n \ge 0} : \|\alpha\|_{\nu} = |\alpha_0| + 2 \sum_{n \ge 1} |\alpha_n| \frac{\nu^n}{n} < 0 \right\},
\end{equation}
to the corresponding space of Chebyshev coefficients with slightly less decay (regularity) than $\ell_\nu^1$.
Note that for any $\nu \ge 1$ and for a fixed $\lambda$, one can show that the map $f^{(\rm eq)}$ defined in \eqref{eq:f=0_equation} satisfies
\[
f^{(\rm eq)}: \R \times ( \ell_\nu^1 )^3 \to \R \times ( \tilde \ell_\nu^1 )^3.
\]
In order to construct the saddle-node map $F$, we let
\begin{equation} \label{eq:g=0_equation}
g(\lambda,a,\gamma,b) \bydef 
D_{\delta,a}f^{(\rm eq)}(\lambda,\delta,a)({\gamma \atop b}) = 
\begin{pmatrix} \eta(b_1) \\ g_1(b) \\ g_2(\lambda,a,\gamma,b) \\ g_3(a,b) \end{pmatrix}, 
\end{equation}
where 
\[
(g_j)_n \bydef 
\begin{cases}
\displaystyle
(b_j)_{0}+2\sum_{\ell=1}^{\infty}(-1)^{\ell}(b_j)_{\ell}-\tilde \alpha_j,  ~~ & n =0,\\
\displaystyle
\left( \Lambda b_j + T d_j(a,b) \right)_n, & n > 0,
\end{cases}
\]
for $j=1,2,3$, with
\begin{equation} \label{eq:d_j}
\begin{pmatrix} d_1 \\ d_2 \\ d_3 \end{pmatrix}
=
\begin{pmatrix} d_1(b) \\ d_2(\lambda,a,b) \\ d_3(a,b) \end{pmatrix}
\bydef
\begin{pmatrix} b_2 \\ 2 \lambda a_3 b_3 \\ - a_3^2b_2 -2a_2 a_3 b_3 \end{pmatrix},
\end{equation}
and
\begin{equation}
\tilde \alpha_j \bydef \begin{cases} 0, & j =1 \\ \gamma, & j =2 \\ 0, & j =3. \end{cases}
\end{equation}

According to \eqref{eq:f=0_equation}, computing the saddle-node bifurcation point $(\lambda,\delta,a)$ requires solving the augmented system
\begin{equation} \label{eq:F=0_saddle-node}
F(x) \bydef \begin{pmatrix} \ell({\gamma \atop b})-1 \\ f^{(\rm eq)}(\lambda,\delta,a) \\ 
D_{\delta,a}f^{(\rm eq)}(\lambda,\delta,a)({\gamma \atop b}) 
\end{pmatrix}
=
\begin{pmatrix} \ell({\gamma \atop b})-1 \\ f^{(\rm eq)}(\lambda,\delta,a) \\ 
g(\lambda,a,\gamma,b)
\end{pmatrix}
= 0,
\end{equation}
where 
\begin{equation} \label{eq:Banach_space_X}
x \bydef (\lambda,\delta,a,\gamma,b) \in X \bydef \R \times \R \times (\ell_\nu^1)^3 \times \R \times (\ell_\nu^1)^3,
\end{equation}
$b = (b_1,b_2,b_3) \in (\ell_\nu^1)^3$, and $\ell:\R \times (\ell_\nu^1)^3 \to \R$ is a linear functional acting on the eigenvector $({\gamma \atop b})$. We call the map $F$ in \eqref{eq:F=0_saddle-node} the {\em saddle-node map}. By construction, a non-degenerate zero $\tx$ of $F$ yields the existence of a saddle-node point, that is a point such that 
$f^{(\rm eq)}(\lambda,\delta,a)=0$ and such that $D_{\delta,a}f^{(\rm eq)}(\lambda,\delta,a)$ has a one-dimensional kernel.

We endow the space $X$ with the product norm
\begin{equation} \label{eq:Banach_space_X_norm}
\| x\|_X \bydef \max \left\{ |\lambda|,|\delta|,\|a_1\|_{\nu},\|a_2\|_{\nu},\|a_3\|_{\nu},|\gamma|,\|b_1\|_{\nu},\|b_2\|_{\nu},\|b_3\|_{\nu} \right\}.
\end{equation}
Moreover, recalling \eqref{eq:tilde_ell_nu_1}, we define
\begin{equation} \label{eq:Banach_space_Y}
Y \bydef \R \times \R \times (\tilde \ell_\nu^1)^3 \times \R \times (\tilde \ell_\nu^1)^3,
\end{equation}
and one can easily verify that $F:X \to Y$ is well defined.

In this section, we present a computer-assisted approach to solving  the saddle-node map \eqref{eq:F=0_saddle-node} using the tools of rigorous numerics in order to obtain a rigorous control over the value of $\lambda^*$.
This approach will give a proof of Theorem~\ref{thm:saddle-node}. The idea of the computer-assisted proof  is to demonstrate that a certain Newton-like operator is a contraction on a closed ball centered at a numerical approximation $\bx$. To compute $\bx$, we consider a finite dimensional projection of the saddle-node map $F: X  \to Y$.

Given a number $m \in \N$, and given a vector $a = (a_n)_{n \ge 0} \in \ell_\nu^1$, consider the projection
\begin{align*}
\pi^m : \ell_\nu^1& \to \R^{m} \\ 
a &\mapsto \pi^m a \bydef (a_n)_{n=0}^{m-1} \in \R^m.
\end{align*}
Given $N \in \N$, we generalize that projection to get $\pi_N^m:(\ell_\nu^1)^{N} \to \R^{Nm}$ defined by
\[
\pi_N^m(a^{(1)}, \dots, a^{(N)}) \bydef ( \pi^m a^{(1)},\dots,\pi^m a^{(N)}) \in \R^{Nm} ,
\]
and $\Pi^{(m)}: \cX \to \R^{6m+3}$ defined by
\[
\Pi^{(m)} x = \Pi^{(m)} (\lambda,\delta,a,\gamma,b) \bydef \left( \lambda,\delta,\pi_3^m a, \gamma,\pi_3^m b \right) \in \R^{6m+3}.
\]
Often, given $x \in X$, we denote
\[
x^{(m)} \bydef \Pi^{(m)} x \in \R^{6m+3}.
\]

Moreover, we define the natural inclusion $\iota^m : \R^{m} \xhookrightarrow{} \ell_\nu^1$ as follows. For $a = (a_n)_{n=0}^{m-1} \in \R^{m}$, we define $\iota^m a \in \ell_\nu^1$ component-wise by
\[
\left( \iota^m a \right)_{\ell}
= 
\begin{cases}
a_n, & n = 0,\dots,m-1
\\
0, & n \ge m.
\end{cases}
\]
Similarly, let $\iota_N^m:\R^{Nm} \xhookrightarrow{} (\ell_\nu^1)^{N}$ be the natural inclusion defined as follows. Given $a = (a^{(1)},\dots,a^{(N)}) \in (\R^{m})^N \cong \R^{Nm}$, we define
\[
\iota_N^m a \bydef \left( \iota^m a^{(1)}, \dots,\iota^m a^{(N)} \right) \in (\ell_\nu^1)^{N}.
\]
We define the natural inclusion $\inc : \R^{6m+3} \xhookrightarrow{} X$, for $x \in \R^{6m+3}$, by
\[
\inc x = \inc(\lambda,\delta,a,\gamma,b) \bydef \left( \lambda,\delta,\iota_3^m a, \gamma, \iota_3^m b \right) \in X.
\]

Let the {\em finite dimensional projection} $F^{(m)} : \R^{6m+3} \to\R^{6m+3}$ of the saddle-node map \eqref{eq:F=0_saddle-node} be defined, for $x \in \R^{6m+3}$, as 
\begin{equation} \label{eq:F_saddle-node_projection}
F^{(m)}(x) = \Pi^{(m)} F(\inc x).
\end{equation}
%
Assume that a numerical approximation $\bx \in \R^{6m+3}$ of \eqref{eq:F_saddle-node_projection} has been obtained using Newton's method, that is $F^{(m)}(\bx) \approx 0$. We slightly abuse the notation and denote $\bx \in \R^{6m+3}$ and $\inc \bx \in X$ both using $\bx$.

The following result is a Newton-Kantorovich theorem with a smoothing 
approximate inverse. It provides an a-posteriori validation method for 
proving rigorously the existence of a point $\tx$ such that $F(\tx)=0$ 
and $\| \tx - \bx\|_X \le r$ for a small radius $r$. Recalling the norm on 
$X$ given in \eqref{eq:Banach_space_X}, denote by 
\[
B_{r} (y) \bydef \left\{ x \in X : \| x - y \|_X \le r \right\} \subset X
\]
the ball of radius $r$ centered at $y \in X$.

\begin{theorem}[\bf Radii Polynomial Approach] \label{thm:radPolyBanach}
For $\bar x \in X$ and $r > 0$  assume that 
$F: X \to Y$ is Fr\'echet differentiable on the ball $B_r(\bar x)$.
Consider bounded linear operators $A^{\dagger} \in B(X,Y)$ and $A \in B(Y,X)$, 
where $A^{\dagger}$ is an approximation of $D F(\bx)$ and $A$
is an approximate inverse of $D F(\bx)$. 
Observe that 
\begin{equation} \label{eq:AF:X->X}
A F \colon X  \to X.
\end{equation}
Assume that $A$ is injective.
Let $Y_0, Z_0,Z_1,Z_2 \ge 0$ be bounds satisfying
\begin{align}
\label{eq:Y0_radPolyBanach}
\| A F(\bx) \|_X &\le Y_0,
\\
\label{eq:Z0_radPolyBanach}
\| I - A A^{\dagger}\|_{B(X)} &\le Z_0,
\\
\label{eq:Z1_radPolyBanach}
\| A[D F(\bx) - A^{\dagger} ] \|_{B(X)} &\le Z_1,
\\
\label{eq:Z2_radPolyBanach}
\| A[D F(\bx+z) - D F(\bx)]\|_{B(X)} &\le Z_2 r, \quad \forall~ z \in B_r(0).
\end{align}
Define the radii polynomial
\begin{equation} \label{eq:radii_polynomial}
p(r) \bydef Z_2 r^2 + (Z_1 + Z_0 - 1) r + Y_0.
\end{equation}
If there exists $0 < r_0 \leq r$ such that
\begin{equation} \label{eq:p(r0)<0}
p(r_0) < 0,
\end{equation}
then there exists a unique $\tx \in B_{r_0} (\bx)$ such that $F(\tx) = 0$.
\end{theorem}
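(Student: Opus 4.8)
The plan is to establish a quantitative contraction estimate for the Newton-like operator
\[
T(x) \bydef x - AF(x), \qquad T : X \to X,
\]
on the ball $B_{r_0}(\bx)$, and then invoke the Banach fixed point theorem. First I would verify that $T$ indeed maps $X$ to $X$: this is precisely the content of \eqref{eq:AF:X->X}, since $F:X\to Y$ and $A\in B(Y,X)$, so $AF:X\to X$ and hence $T(x)=x-AF(x)\in X$. A fixed point $\tx$ of $T$ satisfies $AF(\tx)=0$; because $A$ is assumed injective, this forces $F(\tx)=0$, which is how the zero of $F$ will be recovered at the end.

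Next I would bound $\|T(x)-T(\bx)\|_X$ and $\|T(\bx)-\bx\|_X$ separately. For the second, $\|T(\bx)-\bx\|_X = \|AF(\bx)\|_X \le Y_0$ by \eqref{eq:Y0_radPolyBanach}. For the first, I would write, for $x = \bx+z$ with $z\in B_{r_0}(0)$,
\begin{align*}
T(\bx+z)-T(\bx) &= z - A[F(\bx+z)-F(\bx)] \\
&= z - A\int_0^1 DF(\bx+sz)\,z\,ds \\
&= (I - AA^\dagger)z - A[DF(\bx)-A^\dagger]z - A\int_0^1 [DF(\bx+sz)-DF(\bx)]z\,ds,
\end{align*}
using Fr\'echet differentiability of $F$ on $B_{r_0}(\bx)$ and the fundamental theorem of calculus. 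Taking norms and applying \eqref{eq:Z0_radPolyBanach}, \eqref{eq:Z1_radPolyBanach} and \eqref{eq:Z2_radPolyBanach} (the last with radius $r_0 \le r$, noting $\|sz\|_X \le r_0$ for all $s\in[0,1]$) gives
\[
\|T(\bx+z)-T(\bx)\|_X \le \left( Z_0 + Z_1 + Z_2 r_0 \right) r_0.
\]
Then for any $x\in B_{r_0}(\bx)$,
\[
\|T(x)-\bx\|_X \le \|T(x)-T(\bx)\|_X + \|T(\bx)-\bx\|_X \le (Z_0+Z_1+Z_2 r_0) r_0 + Y_0 = p(r_0) + r_0,
\]
recalling the radii polynomial \eqref{eq:radii_polynomial}. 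The hypothesis $p(r_0)<0$ from \eqref{eq:p(r0)<0} yields $\|T(x)-\bx\|_X < r_0$, so $T$ maps $B_{r_0}(\bx)$ into itself (in fact into its interior). Similarly, the Lipschitz constant $Z_0+Z_1+Z_2 r_0$ satisfies $Z_0+Z_1+Z_2 r_0 = p(r_0)/r_0 + 1 < 1$ since $p(r_0)<0$ and $r_0>0$, so $T$ is a contraction on the complete metric space $B_{r_0}(\bx)$ (closed ball in the Banach space $X$). The Banach fixed point theorem then gives a unique $\tx\in B_{r_0}(\bx)$ with $T(\tx)=\tx$, and injectivity of $A$ promotes this to $F(\tx)=0$; uniqueness of the zero in $B_{r_0}(\bx)$ is inherited from uniqueness of the fixed point.

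The main obstacle is not in this abstract argument, which is essentially a packaged Newton-Kantorovich statement, but rather lies downstream: producing explicit, rigorously verified numerical values for the bounds $Y_0, Z_0, Z_1, Z_2$ for the concrete saddle-node map $F$ in \eqref{eq:F=0_saddle-node}. That requires controlling the tail of the Chebyshev series (using the Banach algebra property of $\ell_\nu^1$ and the smoothing from $\ell_\nu^1$ to $\tilde\ell_\nu^1$ built into the operators $\Lambda$ and $T$), interval-arithmetic evaluation of the finite-dimensional part involving $F^{(m)}$ and $DF^{(m)}$, and a careful choice of the approximate inverse $A$ so that $Z_0$ is small. A minor subtlety to flag in the write-up is checking that the $Z_2$ bound \eqref{eq:Z2_radPolyBanach}, stated for all $z\in B_r(0)$, is applied only with $z$ (and $sz$) in $B_{r_0}(0)\subseteq B_r(0)$, which is legitimate since $r_0\le r$.
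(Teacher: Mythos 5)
Your proof is correct and is precisely the standard contraction-mapping (Newton--Kantorovich) argument for the Newton-like operator $T(x)=x-AF(x)$; the paper does not prove the theorem at all but defers to the cited reference \cite{MR0231218}, describing it as a generalization of the usual Newton--Kantorovich theorem, so your write-up supplies exactly the argument being delegated. The only point to make fully explicit is the one you flag: as literally stated, \eqref{eq:Z2_radPolyBanach} yields the bound $Z_2 r$ (not $Z_2 r_0$) for $sz\in B_{r_0}(0)\subset B_r(0)$, so one should either apply the theorem with $r_0=r$ or observe that in practice (as in the paper's Section~\ref{sec:Z2}, where the estimate carries an explicit factor of the radius) $Z_2$ is a Lipschitz-type constant valid on every sub-ball, which is what your estimate $(Z_0+Z_1+Z_2 r_0)r_0$ tacitly uses.
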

The proof of the theorem, which is a generalization of the usual Newton-Kantorovich theorem can be found (for example in \cite{MR0231218}).

\subsection{The operators \boldmath$A^\dagger$\unboldmath~and \boldmath$A$\unboldmath} \label{sec:operators_A_dagA}

To apply the radii polynomial approach of Theorem~\ref{thm:radPolyBanach}, we need to define the operator $A^\dagger$ (an approximation of the derivative $DF(\bx)$) and the operator $A$ (an approximation of the inverse of $DF(\bx)$). Consider the finite dimensional projection $F^{(m)}:\R^{6m+3} \to\R^{6m+3}$ given in \eqref{eq:F_saddle-node_projection}, and assume that we computed $\bx \in \R^{6m+3}$ such that $F^{(m)}(\bx) \approx 0$. 

We denote by $DF^{(m)}(\bx) \in M_{6m+3}(\R)$ to the Jacobian matrix of $F^{(m)}$ at $\bx$. For the sake of simplicity, given any $N\in\mathbb{N}$, we denote
the differentiation operator $D$ acting on $u \in (\ell_\nu^1)^N$ as
\begin{equation} \label{eq:differentiation_Chebyshev}
(Du)_{n} \bydef 2 n u_{n} 
=
\begin{pmatrix}
2 n(u_{1})_{n}\\
2 n(u_{2})_{n}\\
\vdots\\
2 n(u_{N})_{n}%
\end{pmatrix}.
\end{equation}
And given $x \in X$, we define 
\begin{equation} \label{eq:A_dagger}
\dagA x = \inc \Pi^{(m)} \dagA x + (I- \inc \Pi^{(m)}) \dagA x,
\end{equation}
where $\Pi^{(m)} \dagA x = DF^{(m)}(\bx) x^{(m)}$ and 
\[
(I- \inc \Pi^{(m)}) \dagA x = 
\begin{pmatrix} 0 \\ 0 \\ 
(I - \iota_3^m \pi_3^m) Da
\\
0
\\
(I - \iota_3^m \pi_3^m) Db
\end{pmatrix}.
\]

Recalling the definition of the Banach space $Y$ in \eqref{eq:Banach_space_Y}, we can verify that the operator $\dagA:X \to Y$ is a bounded linear operator. 
For $m$ large enough, it acts as an approximation of the Fr\'echet derivative $D_xF(\bx)$. Its action on the finite dimensional projection is the Jacobian matrix (the derivative) of $F^{(m)}$ at $\bx$, while its action on the tail only keeps the unbounded terms of the differentiation $D$ defined in \eqref{eq:differentiation_Chebyshev}.

Now we consider a matrix $A^{(m)} \in M_{6m+3}(\R)$ such that $A^{(m)} \approx {DF^{(m)}(\bx)}^{-1}$. In other words, this means that $\| I - A^{(m)} DF^{(m)}(\bx) \| \ll 1$. The computation of $A^{(m)}$ is done using a numerical software ({\tt MATLAB} in our case). We decompose the matrix $A^{(m)}$ block-wise as

\[
A^{(m)}=
\begin{pmatrix}
A^{(m)}_{\lambda,\lambda} & A^{(m)}_{\lambda,\delta} & A^{(m)}_{\lambda,a} & A^{(m)}_{\lambda,\gamma} & A^{(m)}_{\lambda,b}
\\
A^{(m)}_{\delta,\lambda} & A^{(m)}_{\delta,\delta} & A^{(m)}_{\delta,a} & A^{(m)}_{\delta,\gamma} & A^{(m)}_{\delta,b}
\\
A^{(m)}_{a,\lambda} & A^{(m)}_{a,\delta} & A^{(m)}_{a,a} & A^{(m)}_{a,\gamma} & A^{(m)}_{a,b}
\\
A^{(m)}_{\gamma,\lambda} & A^{(m)}_{\gamma,\delta} & A^{(m)}_{\gamma,a} & A^{(m)}_{\gamma,\gamma} & A^{(m)}_{\gamma,b}
\\
A^{(m)}_{b,\lambda} & A^{(m)}_{b,\delta} & A^{(m)}_{b,a} & A^{(m)}_{b,\gamma} & A^{(m)}_{b,b}
\end{pmatrix},
\]
so that it acts is defined on $x^{(m)} = (\lambda,\delta,a^{(m)},\gamma,b^{(m)}) \in \R^{6m+3}$.
Thus we define $A$ as
\begin{equation} \label{eq:A}
A =
\begin{pmatrix}
A_{\lambda,\lambda} & A_{\lambda,\delta} & A_{\lambda,a} & A_{\lambda,\gamma} & A_{\lambda,b}
\\
A_{\delta,\lambda} & A_{\delta,\delta} & A_{\delta,a} & A_{\delta,\gamma} & A_{\delta,b}
\\
A_{a,\lambda} & A_{a,\delta} & A_{a,a} & A_{a,\gamma} & A_{a,b}
\\
A_{\gamma,\lambda} & A_{\gamma,\delta} & A_{\gamma,a} & A_{\gamma,\gamma} & A_{\gamma,b}
\\
A_{b,\lambda} & A_{b,\delta} & A_{b,a} & A_{b,\gamma} & A_{b,b}
\end{pmatrix},
\end{equation}
where the action of each block of $A$ is finite (that is they act on $x^{(m)} = \Pi^{(m)} x$ only) except for the two diagonal blocks $A_{a,a}$ and $A_{b,b}$ which have infinite tails. More explicitly, for each $j=1,2,3$,
\begin{align*}
((A_{a,a} a )_j)_n &= 
\begin{cases}
\bigl( (A_{a,a}^{(m)} \pi_3^m a)_j \bigr)_n & \quad\text{for }   n = 0,\dots m-1, \\
\frac{1}{2 n} (a_j)_n  & \quad\text{for }  n \ge m,
\end{cases}
\\
((A_{b,b} b )_j)_n &= 
\begin{cases}
\bigl( (A_{b,b}^{(m)} \pi_3^m b)_j \bigr)_n & \quad\text{for }   n = 0,\dots m-1, \\
\frac{1}{2 n } (b_j)_n  & \quad\text{for }  n \ge m.
\end{cases}
\end{align*}
Having defined the operators $A$ and $\dagA$, we are ready to define the bounds $Y_0$, $Z_0$, $Z_1$ and $Z_2$ (satisfying \eqref{eq:Y0_radPolyBanach}, \eqref{eq:Z0_radPolyBanach}, \eqref{eq:Z1_radPolyBanach} and \eqref{eq:Z2_radPolyBanach}, respectively), required to built the radii polynomial defined in \eqref{eq:radii_polynomial}.

\subsection{\boldmath$Y_0$\unboldmath~bound} \label{sec:Y0}

Denote by $\bx = (\bar \lambda,\bar \delta,\ba,\bar \gamma,\bb) \in X$ the numerical approximation with 
$\ba=(\ba_1,\ba_2,\ba_3) \in (\ell_\nu^1)^3$ and 
$\bb=(\bb_1,\bb_2,\bb_3) \in (\ell_\nu^1)^3$. 
Recalling the definition of $f$ in \eqref{eq:f=0_equation} (which involves the convolutions in \eqref{eq:c_j}) and the definition of $g$ in \eqref{eq:g=0_equation} (which involves the convolutions in \eqref{eq:d_j}), one has that
\begin{align*}
(I - \iota^{m+1} \pi^{m+1})f_1(\ba) &= 0 \in \ell_\nu^1 ,
\\
(I - \iota^{2m-1} \pi^{2m-1})f_2(\bar \delta,\ba,\bar \lambda) &= 0 \in \ell_\nu^1 ,
\\
(I - \iota^{3m-2} \pi^{3m-2})f_3(\ba) &= 0 \in \ell_\nu^1 ,
\\
(I - \iota^{m+1} \pi^{m+1})g_1(\bb) &= 0 \in \ell_\nu^1 ,
\\
(I - \iota^{2m-1} \pi^{2m-1})g_2(\bar \lambda,\ba,\bar \gamma,\bb) &= 0 \in \ell_\nu^1 ,
\\
(I - \iota^{3m-2} \pi^{3m-2})g_3(\ba,\bb) &= 0 \in \ell_\nu^1 .
\end{align*}
This result follows from the fact that the product of $p$ trigonometric functions of degree $m-1$ is a trigonometric function of degree $p(m-1)$, and the $n$ entry of the Chebyshev map $f_j$ (resp. $g_j$) has entries of the form $(c_j)_{n+1}-(c_j)_{n-1}$ (resp. $(d_j)_{n+1}-(d_j)_{n-1}$). Using that information, one concludes that only finitely many entries of $F(\bx)$ are non-zeros, and therefore the computation of the bound $Y_0$ satisfying 

\begin{equation} \label{eq:Y0_sn}
\| A F(\bx) \|_X \le Y_0
\end{equation}
is a finite computation that can be performed using interval arithmetic ({\tt INTLAB} in our case, see \cite{Ru99a}).

\subsection{Basic functional analytic background}
In this section we present some elementary functional analytic background used to  computing the bounds $Z_0$ and $Z_1$.

For an infinite sequence of real numbers  $a = \{a_n \}_{n \ge 0}$,
and $\nu > 1$, we defined
\[
\| a \|_{\nu} \bydef |a_0| + 2 \sum_{n \ge 1} | a_n | \nu^{n} = \sum_{n \ge 0} | a_n | \omega_n,
\]
where
\[
\omega_n \bydef 
\begin{cases} 
1, & n = 0,
\\
2 \nu^n, & n \ge1.
\end{cases}
\]
The dual norm of $\| \cdot\|_\nu$ is 
\[
\| a \|_{\infty, \nu^{-1}} \bydef \sup_{n \ge 0} \frac{|a_n|}{\omega_n},
\]
and the set
\[
\ell_{\infty, \nu^{-1}} \bydef  \left\{ \{a_n \}_{n \ge 0} \, : \, \| a \|_{\infty, \nu^{-1}}<\infty \right\},
\]
is a Banach space.

We have the following known results (e.g. see \cite{LeMi16}),

\begin{lemma} 
\label{lem:dualityBound_ps}
If $a \in \ell_{\nu}^1$ and 
$c \in \ell_{\nu^{-1}}^\infty$, then
\[
\left| \sum_{n \ge 0}  c_n a_n \right| \leq \|c\|_{\infty,\nu^{-1}} \| a\|_{\nu}.
\]
\end{lemma}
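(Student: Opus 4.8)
The plan is to recognize Lemma~\ref{lem:dualityBound_ps} as nothing more than the weighted $\ell^1$--$\ell^\infty$ duality inequality, with the weight sequence being exactly the $\omega_n$ appearing in the definition of $\|\cdot\|_\nu$. First I would apply the triangle inequality to the series, obtaining $\left| \sum_{n \ge 0} c_n a_n \right| \le \sum_{n \ge 0} |c_n|\,|a_n|$, after remarking that this last sum is finite (absolute convergence is established by the very chain of bounds that follows, so the manipulations are legitimate).

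Next, since $\omega_n > 0$ for every $n$, I would rewrite each term as $|c_n|\,|a_n| = \dfrac{|c_n|}{\omega_n}\,\bigl(|a_n|\,\omega_n\bigr)$ and use the uniform bound $\dfrac{|c_n|}{\omega_n} \le \sup_{k \ge 0} \dfrac{|c_k|}{\omega_k} = \|c\|_{\infty,\nu^{-1}}$. Pulling this supremum out of the sum yields
\[
\sum_{n \ge 0} |c_n|\,|a_n| \;\le\; \|c\|_{\infty,\nu^{-1}} \sum_{n \ge 0} |a_n|\,\omega_n \;=\; \|c\|_{\infty,\nu^{-1}}\,\|a\|_{\nu},
\]
which is precisely the asserted estimate. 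The equality $\sum_{n\ge0}|a_n|\omega_n = \|a\|_\nu$ is just the definition of the norm, with $\omega_0 = 1$ and $\omega_n = 2\nu^n$ for $n \ge 1$.

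There is no genuine obstacle here: the two norms $\|\cdot\|_\nu$ and $\|\cdot\|_{\infty,\nu^{-1}}$ were defined as a dual pair expressly so that this inequality holds, and the proof is a two-line computation. The statement is isolated as a lemma only because it is invoked repeatedly when deriving the $Z_0$ and $Z_1$ bounds for the radii polynomial, where one must control quantities of the form $\sum_n c_n a_n$ coming from infinite tails of the linearized operator.
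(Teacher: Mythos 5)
Your proof is correct and is the standard weighted $\ell^1$--$\ell^\infty$ duality argument; the paper itself omits the proof, simply citing \cite{LeMi16}, where essentially this same two-line computation appears. Nothing further is needed.
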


\begin{lemma}\label{lem:fourierCoeffFunctional}
Given $\nu \ge 1$, $k \in \Z$ and $a \in \ell_{\nu}^1$, the function 
$l_{a}^k \colon \ell_{\nu}^1 \to \mathbb{C}$ defined by 
\[
l_a^k(h) \bydef (a*h)_k = \sum_{k_1 + k_2 = k} a_{k_1} h_{k_2},\text{\qquad} h \in \ell_{\nu}^1,
\]
is a bounded linear functional, and 
\begin{equation} \label{eq:l_a^k_norm}
\| l_a^k \| = \sup_{\| h \|_{\nu}\le 1} \left| l_a^k(h) \right| \le
\sup_{j \in \mathbb{Z}} \frac{|a_{k-j}|}{\nu^{|j|}} < \infty.
\end{equation}
\end{lemma}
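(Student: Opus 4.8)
The plan is to obtain the claimed bound by a direct weighted-$\ell^1$ estimate, after recalling the symmetric-extension convention that is implicit throughout the Chebyshev formulation: a one-sided sequence $a=(a_n)_{n\ge0}$ is extended to $\Z$ by $a_{-n}\bydef a_n$, so that $\|a\|_\nu = |a_0| + 2\sum_{n\ge1}|a_n|\nu^n = \sum_{n\in\Z}|a_n|\nu^{|n|}$, and the product $(a*h)_k=\sum_{k_1+k_2=k}a_{k_1}h_{k_2}$ is understood as a sum over all $(k_1,k_2)\in\Z^2$ (this is the cosine-convolution arising from $T_nT_m=\tfrac12(T_{n+m}+T_{|n-m|})$). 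First I would observe that linearity of $l_a^k$ is immediate, since $h\mapsto a*h$ is bilinear.

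For boundedness I would write $l_a^k(h)=\sum_{j\in\Z}a_{k-j}h_j$ and estimate, for an arbitrary $h\in\ell_\nu^1$,
\[
|l_a^k(h)| \le \sum_{j\in\Z}|a_{k-j}|\,|h_j| = \sum_{j\in\Z}\frac{|a_{k-j}|}{\nu^{|j|}}\,|h_j|\nu^{|j|} \le \left(\sup_{j\in\Z}\frac{|a_{k-j}|}{\nu^{|j|}}\right)\sum_{j\in\Z}|h_j|\nu^{|j|} = \left(\sup_{j\in\Z}\frac{|a_{k-j}|}{\nu^{|j|}}\right)\|h\|_\nu ,
\]
which yields $\|l_a^k\| = \sup_{\|h\|_\nu\le1}|l_a^k(h)| \le \sup_{j\in\Z}\frac{|a_{k-j}|}{\nu^{|j|}}$. (Equivalently, one may invoke Lemma~\ref{lem:dualityBound_ps} applied to the sequence $j\mapsto a_{k-j}$, which belongs to $\ell_{\nu^{-1}}^\infty$ by the finiteness argument below.)

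It then remains to check that this supremum is finite. Since $a\in\ell_\nu^1$, every term of the series defining $\|a\|_\nu$ is dominated by $\|a\|_\nu$, i.e.\ $|a_n|\nu^{|n|}\le\|a\|_\nu$ for all $n\in\Z$; combining this with the triangle inequality $|j|+|k-j|\ge|k|$ and $\nu\ge1$ gives
\[
\frac{|a_{k-j}|}{\nu^{|j|}} = \frac{|a_{k-j}|\nu^{|k-j|}}{\nu^{|j|+|k-j|}} \le \frac{\|a\|_\nu}{\nu^{|k|}} \le \|a\|_\nu < \infty
\]
uniformly in $j\in\Z$, which establishes \eqref{eq:l_a^k_norm}. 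There is no real obstacle here; the only point requiring a little care is the bookkeeping between the one-sided indexing of $\|\cdot\|_\nu$ in \eqref{eq:ell_nu_1} and the two-sided convolution sum — the factor $2$ attached to the indices $n\ge1$ in $\|\cdot\|_\nu$ is precisely what makes $\sum_{j\in\Z}|h_j|\nu^{|j|}=\|h\|_\nu$ after pairing $j$ with $-j$.
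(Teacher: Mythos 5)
Your proof is correct: the paper states this lemma without proof (citing the reference for it), and your weighted H\"older-type estimate $|l_a^k(h)|\le\sum_{j\in\Z}\frac{|a_{k-j}|}{\nu^{|j|}}|h_j|\nu^{|j|}$ together with the identity $\sum_{j\in\Z}|h_j|\nu^{|j|}=\|h\|_\nu$ under the symmetric extension is exactly the standard duality argument behind it. The finiteness step via $|a_n|\nu^{|n|}\le\|a\|_\nu$ and $|j|+|k-j|\ge|k|$ is also correct, and your remark about the two-sided indexing convention is the right bookkeeping point.
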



Fix a truncation mode to be $m$. Given $h \in \ell_\nu^1$, set 
\begin{align*}
h^{(m)} &\bydef (h_0,h_1, \cdots, h_{m-1}, 0, 0, \ldots) \in \ell_\nu^1, \\
h^{(\infty)} &\bydef h - h^{(m)} \in \ell_\nu^1.
\end{align*} 

\begin{corollary}\label{cor:Psi_k}
Let $N \in \N$ and let $\bar \alpha = (\bar \alpha_{0},\bar \alpha_{1}, \cdots, \bar \alpha_{N}, 0, 0, \ldots) \in \ell_\nu^1$.
Suppose that $0 \le k < m$ and define $\hat{l}_{\bar \alpha}^k \in \ell_{\nu^{-1}}^\infty$ by 
\[
\hat{l}_{\bar \alpha}^k (h) \bydef (\bar \alpha * h^{(\infty)})_k  = \sum_{k_1 + k_2 = k } \bar \alpha_{k_1} h^{(\infty)}_{k_2}.
\]
Then, for all $h \in \ell_\nu^1$ such that $\|h\|_{\nu} \le 1$,
\begin{equation} \label{eq:bounds_lkac}
\left| \hat{l}_{\bar \alpha}^k (h) \right| \le 
\Psi_k(\bar \alpha) \bydef
\max_{j=m,\dots,k+m-1} \left( \frac{|\bar \alpha_{k-j}+\bar \alpha_{k+j}|}{2\nu^{j}} \right).
\end{equation}
\end{corollary}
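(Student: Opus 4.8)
The plan is to write $\hat{l}_{\bar\alpha}^k$ explicitly as pairing against a (finitely supported) element of $\ell_{\nu^{-1}}^\infty$, and then invoke the duality bound of Lemma~\ref{lem:dualityBound_ps}. First I would unfold the convolution, remembering that Chebyshev coefficient sequences are convolved via their even extension: since $h^{(\infty)}$ is supported on $\{|n|\ge m\}$ and is even, $(\bar\alpha * h^{(\infty)})_k = \sum_{|n_2|\ge m}\bar\alpha_{k-n_2}\,h_{|n_2|}$. Splitting this sum into the contributions $n_2\ge m$ and $n_2\le -m$ and relabelling $j=|n_2|$ gives the identity
\[
\hat{l}_{\bar\alpha}^k(h) \;=\; \sum_{j\ge m}\bigl(\bar\alpha_{k-j}+\bar\alpha_{k+j}\bigr)\,h_j,
\]
with the convention $\bar\alpha_{k-j}=\bar\alpha_{|k-j|}$. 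Thus $\hat{l}_{\bar\alpha}^k$ is the pairing against the sequence $c=(c_j)_{j\ge 0}$ defined by $c_j=\bar\alpha_{k-j}+\bar\alpha_{k+j}$ for $j\ge m$ and $c_j=0$ for $j<m$.

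Next I would compute $\|c\|_{\infty,\nu^{-1}}$. Every index contributing to $c$ satisfies $j\ge m\ge 1$, so the weight there is $\omega_j=2\nu^j$ and $|c_j|/\omega_j=|\bar\alpha_{k-j}+\bar\alpha_{k+j}|/(2\nu^j)$. Because $\bar\alpha$ is supported within the first $m$ coefficients and $0\le k<m$, one has $\bar\alpha_{k+j}=0$ for all $j\ge m$ and $\bar\alpha_{|k-j|}=0$ once $j\ge k+m$; hence $c_j\ne 0$ only for the finitely many indices $j\in\{m,\dots,k+m-1\}$. In particular $c\in\ell_{\nu^{-1}}^\infty$ and, by definition of $\Psi_k$, $\|c\|_{\infty,\nu^{-1}}=\Psi_k(\bar\alpha)$.

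Finally, applying Lemma~\ref{lem:dualityBound_ps} to $c$ and to any $h\in\ell_\nu^1$ with $\|h\|_\nu\le 1$ gives
\[
\bigl|\hat{l}_{\bar\alpha}^k(h)\bigr| \;=\; \Bigl|\sum_{j\ge 0}c_j h_j\Bigr| \;\le\; \|c\|_{\infty,\nu^{-1}}\,\|h\|_\nu \;\le\; \Psi_k(\bar\alpha),
\]
which is the estimate \eqref{eq:bounds_lkac}. Equivalently one can bypass the named lemma and estimate directly, $\sum_{j\ge m}|\bar\alpha_{k-j}+\bar\alpha_{k+j}|\,|h_j|\le\Psi_k(\bar\alpha)\sum_{j\ge m}2\nu^j|h_j|\le\Psi_k(\bar\alpha)\,\|h\|_\nu$, where the hypothesis $j\ge m\ge1$ is exactly what lets the factor $2\nu^j$ be absorbed into $\|h\|_\nu$.

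I do not anticipate a genuine obstacle here: the content is elementary once the convolution is unfolded. The only point requiring care is the bookkeeping of the even extension of Chebyshev sequences — the fold of the negative-frequency part is what produces the symmetric combination $\bar\alpha_{k-j}+\bar\alpha_{k+j}$ and the denominator $2\nu^j$ (rather than the $\nu^j$ that appears in the unfolded estimate of Lemma~\ref{lem:fourierCoeffFunctional}), together with correctly pinning down the finite index window $\{m,\dots,k+m-1\}$ on which the representing sequence $c$ lives.
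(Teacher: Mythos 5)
Your proof is correct and is essentially the paper's own argument: both unfold the convolution over the even extension into a pairing $\sum_{j\ge 0} c_j h_j$ with $c_j=\bar\alpha_{k-j}+\bar\alpha_{k+j}$ supported on $j\ge m$, observe that only the finite window $j=m,\dots,k+m-1$ contributes, and conclude by the duality estimate between $\ell_\nu^1$ and $\ell_{\infty,\nu^{-1}}$. Your explicit remark that $\bar\alpha$ must be supported within the first $m$ coefficients for the window $\{m,\dots,k+m-1\}$ to capture all nonzero $c_j$ makes precise a hypothesis the paper leaves implicit, but the route is the same.
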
 

\begin{proof}
Notice that
\[
\hat{l}_{\bar \alpha}^k (h)  = (\bar \alpha * h^{(\infty)})_k= \sum_{|k_2| \ge m} \bar \alpha_{k-k_2} h_{k_2} 
= \sum_{k_2 \ge m} (\bar \alpha_{k-k_2}+\bar \alpha_{k+k_2}) h_{k_2} 
= \sum_{j \ge 0} c_{j} h_{j},
\]
where
\[
c_j \bydef \begin{cases} \bar \alpha_{k-j} + \bar \alpha_{k+j},& \text{if } j \ge m, \\ 0, & \text{if } 0 \le j \le m. \end{cases}
\]
Since $\bar \alpha_k=0$ for all $|k| > N$, we use \eqref{eq:l_a^k_norm} to obtain
\[
\left| l_{\bar \alpha}^k(h)\right| \le \sup_{j \ge 0} \frac{|c_{j}|}{\omega_j} \le
\max_{j=m,\dots,k+m-1} \frac{|\bar \alpha_{k-j}+\bar \alpha_{k+j}|}{2\nu^{j}}
= \Psi_k(\bar \alpha). 
\]
\end{proof}
\subsection{\boldmath$Z_0$\unboldmath~bound} \label{sec:Z0}

We aim at computing a bound $Z_0$ satisfying \eqref{eq:Z0_radPolyBanach}.
Let $B \bydef I - A A^{\dagger}$, which we denote block-wise by

\[
B =
\begin{pmatrix}
B_{\lambda,\lambda} & B_{\lambda,\delta} & B_{\lambda,a} & B_{\lambda,\gamma} & B_{\lambda,b}
\\
B_{\delta,\lambda} & B_{\delta,\delta} & B_{\delta,a} & B_{\delta,\gamma} & B_{\delta,b}
\\
B_{a,\lambda} & B_{a,\delta} & B_{a,a} & B_{a,\gamma} & B_{a,b}
\\
B_{\gamma,\lambda} & B_{\gamma,\delta} & B_{\gamma,a} & B_{\gamma,\gamma} & B_{\gamma,b}
\\
B_{b,\lambda} & B_{b,\delta} & B_{b,a} & B_{b,\gamma} & B_{b,b}
\end{pmatrix}.
\]
Note that the tails of $B$ vanish by the definition of the diagonal tails of $A$ and $\dagA$. We can compute the bound
\[
{\tiny
Z_0^{(\alpha)} \bydef
\left\{ 
\begin{array}{ll}
\hspace{-.2cm}
\displaystyle
 \sum_{\tilde \alpha \in \{ \lambda,\delta,\gamma \}} 
 \left| B_{\alpha,\tilde \alpha} \right| 
 +  
  \sum_{\tilde \alpha \in \{ a_1,a_2,a_3, \atop \qquad b_1,b_2,b_3 \}} 
  \| B_{\alpha,\tilde \alpha} \|_{\infty, \nu^{-1}}
&
\alpha \in \{ \lambda,\delta,\gamma \}  ,
\\
\hspace{-.2cm}
\displaystyle
\sum_{\tilde \alpha \in \{ \lambda_1,\lambda_2,\lambda_3, \atop \alpha_1,\dots,\alpha_{n-1}\}} 
\hspace{-.2cm}
\| B_{\alpha,\tilde \alpha} \|_\nu
+ \hspace{-.2cm} 
\sum_{\tilde \alpha \in \{ a_1,a_2,a_3, \atop \qquad b_1,b_2,b_3 \}} 
\max_{s=0,\dots,m-1} \frac{1}{\omega_{s}} \sum_{\ell=0,\dots,m-1} \left| \left( B_{\alpha,\tilde \alpha} \right)_{\ell,s} \right| \omega_{\ell} & 
\alpha \in \{ a_1,a_2,a_3,\atop \qquad b_1,b_2,b_3 \}.
\end{array}
\right.
}
\]
letting
\begin{equation} \label{eq:Z0_sn}
Z_0 \bydef
\max \left\{ Z_0^{(\alpha)} : 
\alpha \in \{ \lambda,\delta,\gamma,a_1,a_2,a_3,b_1,b_2,b_3\} \right\},
\end{equation}
by construction we get that 
\[
\| I - A A^{\dagger}\|_{B(X)} \le Z_0.
\]

\subsection{\boldmath$Z_1$\unboldmath~bound} \label{sec:Z1}

For any $h \in B_1(0) \subset X$, let
\[
z \bydef [DF(\bx)-A^\dagger]h \in Y.
\]
Denote 
\begin{equation}  \label{eq:h,z}
\begin{aligned}
h &=(h_\lambda,h_\delta,h_{a_1},h_{a_2},h_{a_3},h_\gamma,h_{b_1},h_{b_2},h_{b_3}),
\\
z &=(z_\lambda,z_\delta,z_{a_1},z_{a_2},z_{a_3},z_\gamma,z_{b_1},z_{b_2},z_{b_3}).
\end{aligned}
\end{equation}
Note that 
\begin{align*}
z_\lambda=0,\text{\qquad}
z_\delta = 2 \sum_{\ell \ge m} (h_{a_1})_\ell,\text{\qquad}
z_\gamma = 2 \sum_{\ell \ge m} (h_{b_1})_\ell.
\end{align*}
Moreover, for $j=1,2,3$, we have
\[
(z_{a_j})_n
=
\begin{cases}
\displaystyle
2 \sum_{\ell \ge m} (-1)^\ell (h_{a_j})_\ell, & n=0
\\
(T \psi_{a_j})_n, & 0<n
\end{cases},
\]
where
\[
(\psi_{a_j})_n \bydef
\begin{cases}
\begin{cases}
0, & 0<n<m \\
(h_{a_2})_n, & n \ge m
\end{cases}, & j =1 
\vspace{.1cm}
\\
\begin{cases}
 \left(2 \bar \lambda \ba_3 h_{a_3}^{(\infty)} \right)_n, & 0<n<m \\
\left( 2 \bar \lambda \ba_3 h_{a_3} + \ba_3^2 h_{\lambda}\right)_n, & n \ge m
\end{cases}, & j =2
\vspace{.1cm}
\\
\begin{cases}
- \left( 2 \ba_2 \ba_3 h_{a_3}^{(\infty)} + \ba_3^2 h_{a_2}^{(\infty)} \right)_n, & 0<n<m \\
- \left( 2 \ba_2 \ba_3 h_{a_3} + \ba_3 ^2 h_{a_2} \right)_n, & n \ge m
\end{cases}, & j =3.
\end{cases}
\]
Similarly, for $j=1,2,3$, we have
\[
(z_{b_j})_n
=
\begin{cases}
\displaystyle
2 \sum_{\ell \ge m} (-1)^\ell (h_{b_j})_\ell, & n=0
\\
(T \psi_{b_j})_n, & 0<n
\end{cases}
\]
where
\[
{\tiny
(\psi_{b_j})_n \bydef
\begin{cases}
\begin{cases}
0, & 0<n<m \\
(h_{b_2})_n, & n \ge m
\end{cases}, & j =1 
\vspace{.1cm}
\\
\begin{cases}
2 \bar \lambda \left( \ba_3 h_{b_3}^{(\infty)} + \bb_3 h_{a_3}^{(\infty)}  
  \right)_n, & 0<n<m \\
\left( 
2 \bar \lambda (\ba_3 h_{b_3} + \bb_3 h_{a_3} ) + 2 \ba_3 \bb_3 h_{\lambda}
\right)_n, & n \ge m
\end{cases}, & j =2
\vspace{.1cm}
\\
\begin{cases}
- \left( 2 \ba_3 \bb_3 h_{a_2}^{(\infty)} + 2 (\ba_3 \bb_3 + \ba_2 \bb_3 )h_{a_3}^{(\infty)} + (\ba_3^2 + 2 \ba_2 \ba_3 ) h_{b_3}^{(\infty)} \right)_n, & 0<n<m \\
- \left( 2 \ba_3 \bb_3 h_{a_2} + 2 (\ba_3 \bb_2 + \ba_2 \bb_3 )h_{a_3} + \ba_3^2h_{b_2} + 2 \ba_2 \ba_3 h_{b_3} \right)_n, & n \ge m
\end{cases}, & j =3.
\end{cases}
}
\]

Using Corollary~\ref{cor:Psi_k}, for each $n = 0,\dots,m$, we can easily compute upper bounds $\hat \psi_{a_j},\hat \psi_{b_j} \ge 0$ such that 
\[
| (\psi_{a_j})_n | \le (\hat \psi_{a_j})_n
\quad \text{and} \quad 
| (\psi_{b_j})_n | \le (\hat \psi_{b_j})_n, \qquad \text{for all } n = 0,\dots,m.
\]
More explicitly, we set $(\hat \psi_1^a)_n = (\hat \psi_1^b)_n =0$ for $n=0,\dots,m$, and
\begin{align*}
(\hat \psi_{a_2})_n & =  2 |\bar \lambda| \Psi_n(\ba_3) ,
\\
(\hat \psi_{a_3})_n & = 2\Psi_n( \ba_2 \ba_3)+ \Psi_n(\ba_3^2) ,
\\
(\hat \psi_{b_2})_n & = 2 |\bar \lambda| \left( \Psi_n(\ba_3) + \Psi_n(\bb_3) \right)  ,
\\
(\hat \psi_{b_3})_n & = 2\Psi_n(\ba_3 \bb_3) + 2 \Psi_n(\ba_3 \bb_2 + \ba_2 \bb_3 ) + \Psi_n(\ba_3^2) + 2 \Psi_n(\ba_2 \ba_3 ).
\end{align*}
Using these bounds, for $n=0,\dots,m-1$, we compute $(\hat z_{a_j})_n$ and $(\hat z_{b_j})_n$ such that 
$|(z_{a_j})_n| \le (\hat z_{a_j})_n$ and $|(z_{b_j})_n| \le (\hat z_{b_j})_n$. More explicitly, for each $ j=1,2,3$, set
\[
(\hat z_{a_j})_0 = (\hat z_{b_j})_0 = \frac{1}{\nu},
\]
and
\[
(\hat z_{a_j})_n \bydef (|T|\hat \psi_{a_j})_n
\qquad 
(\hat z_{b_j})_n \bydef (|T|\hat \psi_{b_j})_n,
\]
for $n=1,\dots,m-1$, where $|T|$ represents the operator with entries given by the component-wise absolute values of the entries of $T$.
Moreover, set 
\begin{align*}
\hat z_\lambda \bydef 0,\text{\qquad}
\hat z_\delta = \frac{1}{\nu},\text{\qquad}
\hat z_\gamma =  \frac{1}{\nu}.
\end{align*}
Recall that $z \bydef [DF(\bx)-A^\dagger]h$. Denote $w = A z$ and 
\[
w =(w_\lambda,w_\delta,w_{a_1},w_{a_2},w_{a_3},w_\gamma,w_{b_1},w_{b_2},w_{b_3}).
\]
Thus, for each $\alpha \in \{a_1,a_2,a_3,b_1,b_2,b_3\}$, we have the estimate
\begin{align*}
\| w_{\alpha}\|_\nu &= \sum_{n=0}^{m-1} \left| \left( \left( A^{(m)} z^{(m)} \right)_\alpha \right)_n \right| \omega_n 
+ \sum_{n \ge m} \frac{1}{2n} |(w_{\alpha})_n| \omega_n
\\
& \le  \sum_{n=0}^{m-1} \left( \left( |A^{(m)}| \hat z^{(m)} \right)_\alpha \right)_n  \omega_n 
+ \frac{1}{2m} \sum_{n \ge m}  |(T \psi_{\alpha})_n| \omega_n
\\
& \le 
z^{(1)}_\alpha \bydef \sum_{n=0}^{m-1} \left( \left( |A^{(m)}| \hat z^{(m)} \right)_\alpha \right)_n  \omega_n 
+ \frac{1}{2m} \| T\|_{B(\ell_\nu^1)} \hat \psi_{\alpha}^{(\infty)},
\end{align*}
where

\[
\hat \psi_{\alpha}^{(\infty)} \bydef
\begin{cases}
2 \bar \lambda \| \ba_3 \|_\nu + \| \ba_3^2\|_\nu, & \alpha = a_2
\\
2 \|\ba_2 \ba_3\|_\nu  +\| \ba_3 ^2 \|_\nu , & \alpha = a_3
\\
2 \bar \lambda (\|\ba_3 \|_\nu+ \|\bb_3 \|_\nu ) + 2 \| \ba_3 \bb_3 \|_\nu , & \alpha = b_2
\\
2 \|\ba_3 \bb_3 \|_\nu + 2 \| \ba_3 \bb_2 + \ba_2 \bb_3 \|_\nu + \| \ba_3^2\|_\nu + 2\| \ba_2 \ba_3 \|_\nu, & \alpha = b_3.
\end{cases}
\]
Therefore, we set 
\begin{equation} \label{eq:Z1_sn}
Z_1 = \max\left\{ z^{(1)}_{\lambda},z^{(1)}_{\delta},z^{(1)}_{a_1},z^{(1)}_{a_2},z^{(1)}_{a_3},z^{(1)}_{\gamma},z^{(1)}_{b_1},z^{(1)}_{b_2},z^{(1)}_{b_3} \right\},
\end{equation}
where 
\begin{align*}
z^{(1)}_{\lambda} \bydef \left( |A^{(m)}| \hat z^{(m)} \right)_\lambda,\text{\qquad}
z^{(1)}_{\delta} \bydef \left( |A^{(m)}| \hat z^{(m)} \right)_\delta,\text{\qquad}
z^{(1)}_{\gamma} \bydef \left( |A^{(m)}| \hat z^{(m)} \right)_\gamma .
\end{align*}
\subsection{\boldmath$Z_2$\unboldmath~bound} \label{sec:Z2}

We look for a bound $Z_2$ in \eqref{eq:Z2_radPolyBanach} such that
\[
\| A[DF(\bx+z) - D F(\bx)]\|_{B(X)} \le Z_2 r, \quad \forall~ z \in B_r(0).
\]
Let $z \in B_r(0)$ and $h \in B_1(0)$ (which we denote component-wise as in \eqref{eq:h,z}), and denote
\[
w = w(z,h) \bydef \left( DF(\bx+z) - D F(\bx) \right)h.
\]
Note that $w_\lambda=w_\delta=w_\gamma=0$, $w_{a_1}=w_{b_1}=0 \in \ell_\nu^1$. Moreover, for each $\alpha \in \{a_2,a_3,b_2,b_3\}$, we have $w_{\alpha} = T \hat w_{\alpha}$ with
\begin{align*}
\hat w_{a_2} & = 2 \ba_3 h_{\lambda} z_{a_3} + 2 h_{a_3} \bar \lambda z_{a_3} + h_{\lambda} z_{a_3}^2 + 2 \ba_3 h_{a_3} z_{\lambda} + 2 h_{a_3} z_{a_3} z_{\lambda} ,
\\
\hat w_{a_3} & = -2 \ba_3 h_{a_3} z_{a_2} - 2 \ba_3 h_{a_2} z_{a_3} - 2 \ba_2 h_{a_3} z_{a_3} - 2 h_{a_3} z_{a_2} z_{a_3} - 
 h_{a_2} z_{a_3}^2 ,
\\
\hat w_{b_2} & = 2 \bb_3 h_{\lambda} z_{a_3} + 2 h_{b_3} \bar \lambda z_{a_3} + 2 \ba_3 h_{\lambda} z_{b_3} + 2 h_{a_3} \bar \lambda z_{b_3}  \\ 
& \quad +  2 h_{\lambda} z_{a_3} z_{b_3} + 2 \bb_3 h_{a_3} z_{\lambda} + 2 \ba_3 h_{b_3} z_{\lambda} + 2 h_{b_3} z_{a_3} z_{\lambda} + 
 2 h_{a_3} z_{b_3} z_{\lambda} ,
\\
\hat w_{b_3} & = 
-2 \bb_3 h_{a_3} z_{a_2} - 2 \ba_3 h_{b_3} z_{a_2} - 2 \bb_3 h_{a_2} z_{a_3} - 2 \bb_2 h_{a_3} z_{a_3} - 
 2 \ba_3 h_{b_2} z_{a_3} 
 \\
 & \quad  - 2 \ba_2 h_{b_3} z_{a_3} - 2 h_{b_3} z_{a_2} z_{a_3} 
 - h_{b_2} z_{a_3}^2  - 2 \ba_3 h_{a_3} z_{b_2} - 2 h_{a_3} z_{a_3} z_{b_2} 
  \\
 & \quad 
 - 2 \ba_3 h_{a_2} z_{b_3} - 2 \ba_2 h_{a_3} z_{b_3} - 
 2 h_{a_3} z_{a_2} z_{b_3} - 2 h_{a_2} z_{a_3} z_{b_3}.
\end{align*}
Thus
\begin{align*}
\| \hat w_{a_2}\|_\nu & \le \left( 4 \|\ba_3\|_\nu  + 2 |\bar \lambda|  + 3  r \right) r ,
\\
\|\hat w_{a_3}\|_\nu & \le  \left( 4 \|\ba_3\|_\nu  + 2 \| \ba_2 \|_\nu + 3 r  \right) r ,
\\
\|\hat w_{b_2}\|_\nu & \le 
\left(
4 \| \bb_3\|_\nu + 4 \|\ba_3\|_\nu + 4 |\bar \lambda|  + 6 r \right) r ,
\\
\|\hat w_{b_3}\|_\nu & \le \left( 4 \|\ba_2\|_\nu + 2 \|\bb_2\|_\nu  + 4 \|\bb_3\|_\nu  + 8 \|\ba_3\|_\nu + 9 r  \right) r.
\end{align*}
Note that $\| T \|_{B(\ell_\nu^1)} \le 2 \nu$, because 
\begin{align*}
\| T h \|_\nu & = 2 \sum_{j \ge 1} |-h_{j-1}+h_{j+1}| \nu^j 
\le  \nu  \left( 2 \sum_{j \ge 1} |h_{j-1}| \nu^{j-1}  \right)+  \frac{1}{\nu}  \left(2 \sum_{j \ge 1} |h_{j+1}| \nu^{j+1}  \right)
\\
& = \nu \left( |h_0| + \| h\|_\nu \right) + \frac{1}{\nu} \left( \|h\|_\nu - 2|h_1| - |h_0| \right) 
\\
& = \left( \nu + \frac{1}{\nu} \right) \| h\|_\nu + \left( \nu - \frac{1}{\nu} \right) |h_0| - \frac{2}{\nu} |h_1|
\\ 
& \le \left( \nu + \frac{1}{\nu} \right) \| h\|_\nu + \left( \nu - \frac{1}{\nu} \right) \| h\|_\nu 
= (2 \nu ) \| h\|_\nu.
\end{align*}
Fix $r^* \ge r$ (a condition that needs to be checked a posteriori), and set
\begin{align*}
z^{(2)}_{a_2} & \bydef  4 \|\ba_3\|_\nu  + 2 |\bar \lambda|  + 3  r^* ,
\\
z^{(2)}_{a_3}  & \bydef 4 \|\ba_3\|_\nu  + 2 \| \ba_2 \|_\nu + 3 r^* ,
\\
z^{(2)}_{b_2}  & \bydef 4 \| \bb_3\|_\nu + 4 \|\ba_3\|_\nu + 4 |\bar \lambda|  + 6 r^* ,
\\
z^{(2)}_{b_3}  & \bydef 4 \|\ba_2\|_\nu + 2 \|\bb_2\|_\nu  + 4 \|\bb_3\|_\nu  + 8 \|\ba_3\|_\nu + 9 r^*.
\end{align*}
Under these assumptions, we can verify that
\[
\| A[DF(\bx+z) - D F(\bx)]\|_{B(X)} \le \|A\|_{B(X)} \| T \|_{B(\ell_\nu^1)} 
\max_{\alpha \in \{a_2,a_3,b_2,b_3\} }\left\{ \|\hat w_{\alpha}\|_\nu \right\}.
\]
Therefore, we can set
\begin{equation} \label{eq:Z2_sn}
Z_2 \bydef 2 \nu  \|A\|_{B(X)} \max \left( z^{(2)}_{a_2},z^{(2)}_{a_3},z^{(2)}_{b_2},z^{(2)}_{b_3} \right),
\end{equation}
where the computation of $\|A\|_{B(X)}$ is obtained with the same approach in Section~\ref{sec:Z0}. 

\subsection{Proof of Theorem  \ref{thm:saddle-node}} \label{sec:proof_sn}

We fix $m=65$ and obtain (using Newton's method) a numerical approximation $\bx \in \R^{6m+3}=\R^{393}$ such that $F^{(m)}(\bx) \approx 0$.
We fixed $\nu = 1.05$, and combining the explicit and computable bounds $Y_0$, $Z_0$, $Z_1$ and $Z_2$ given respectively by
\eqref{eq:Y0_sn}, \eqref{eq:Z0_sn}, \eqref{eq:Z1_sn} and \eqref{eq:Z2_sn}, we defined the radii polynomial 
$p(r)$ as in \eqref{eq:radii_polynomial} and applied the radii polynomial approach of Theorem~\ref{thm:radPolyBanach} to show that $p(r_0)<0$ with $r_0 = 5.7 \times 10^{-12}$. This yields the existence of a unique $\tx \in B_{r_0}(\bx) \subset X$ such that $F(\tx)=0$. This rigorous error bound implies the proof of Theorem~\ref{thm:saddle-node}. The graph of the solution is portrayed in Figure~\ref{fig:saddle_node_steady_state}.

Choosing the values for $m$ and $\nu$ is heuristic, non unique and done essentially so that the bound $Z_1$ satisfies $Z_1<1$ (this is indeed  a necessary condition for \eqref{eq:p(r0)<0} to hold for some $r_0>0$). Recalling \eqref{eq:Z1_sn}, and the definition $\hat z_\delta = \hat z_\gamma = \frac{1}{\nu}$, it is clear that $Z_1<1$ only if $\nu>1$. However, it cannot be taken too large as the solution itself may not have enough regularity to be in the space $\ell_\nu^1$. Also, taking $\nu>1$ provides decay in the bound $\Psi_k(\bar \alpha)$ in \eqref{eq:bounds_lkac}. Finally the choice of $m$ needs to be large enough so that the {\em defect bound} $Y_0$ is small enough and so that the {\em tail terms} $\frac{1}{2m} \| T\|_{B(\ell_\nu^1)} \hat \psi_{\alpha}^{(\infty)}$ in the definition of $z^{(1)}_\alpha$ are less than $1$. While other choices would have worked, we found that the choice $m=65$ and $\nu = 1.05$ yielded the best rigorous error bound possible in that of $r_0 = 5.7 \times 10^{-12}$.

\section{Conclusion}

In this paper, we studied a particular one-dimensional model for a 
microelectrical device in equation \eqref{1}. This equation forms part of a wide
range of Hamiltonian PDEs modeling physical phenomena such as the nonlinear
wave equation, the nonlinear Schr\"odinger equation, beam's equation, and Euler's
equation and its multiple approximations appearing in water waves. All these
Hamiltonian PDEs exhibit trivial or steady solutions, and near these steady
solutions there are periodic and quasiperiodic solutions. However, proving
existence of such solutions exhibits a small divisor problem unless one
imposes a special relation between the period and the domain of the equation.
In that case the equation can be solved with a Lyapunov-Schmidt procedure by
separating the equation into the kernel and range equation. Even if the small
divisor problem can be avoided, there are other mathematical difficulties
associated to the existence of periodic solutions such as the lack of
compactness of the linearized operator for the range equation or the infinite
dimension of the kernel equation.

Equation \eqref{1} has a family of stable steady states $\left(  u_{\lambda
},\lambda\right)  $ for $\lambda\in\lbrack0,\lambda_{\ast}]$. 
In this paper, we proved the existence of an infinite number of continuous branches
of periodic solutions arising from the steady solution $u_{\lambda}$. The
local branches have fixed periods satisfying a rational relation with the
space length and arise from $u_{\lambda}$ for bifurcation values
$\lambda \in\lbrack0,\lambda_{\ast}]$. In order to tackle the difficulties
associated to prove this fact we used a combination of analytic estimates and
computer-assisted proofs. We also introduced a systematic setting to compute
numerically these branches of solutions. 

The specific features of equation \eqref{1} imply that the set of bifurcation
values $\lambda$ is not only infinite, but actually it is a dense subset
of parameters in $[0,\lambda_{\ast}]$. Moreover, in the complement of the
dense set of $[0,\lambda_{\ast}]$, KAM theory for Hamiltonian PDEs may be
used to prove the existence of periodic solutions for a subset of almost full
measure of $[0,\lambda_{\ast}]$. However, in such case the periods must
satisfy some Diophantine relations and the periodic solutions form cantor-like
sets, that is they do not form continuous families as in our results.

We finish our conclusion by mentioning some of the possible extensions
of our work:

\begin{enumerate}
\item  The methods presented here can be used to solve similar problems in
other Hamiltonian PDEs. In particular, the same procedure can be implemented
to prove the existence of periodic solutions in the equation of MEMS with
other dielectric permitivity properties $f(x)\neq1$ or for other nonlinear
wave equations of the form $u_{tt}-u_{xx}=$ $f(x,u)$.

\item In order to prove our result, in Section~\ref{sec:cap} we implement a
computer-assisted proof to validated the steady solution $u_{\lambda_{\ast}}$
at critical value $\lambda_{\ast}$. Actually, the numerical setting in
Sections~\ref{sec:cont_steady-states} and~\ref{sec:cont_eigenpairs}, and the methods of Section~\ref{sec:cap} can be used to
validate also the trivial branch $u_{\lambda}$ and its spectrum
(eigenfunctions and eigenvalues). Furthermore, similar procedures can be used
to validate also radial steady solutions for the nonlinear equation in more
dimensions
\[
\Delta u=\lambda f(u),\qquad u_{\partial B}=0,
\]
with analytic nonlinearities such as $f(u)=u^{p}$ or $f(u)=e^{u}$ (e.g. see \cite{MR3917433}).

\item Unfortunately, there are no readily available methods to validate numerically the
periodic solutions obtained in Section~\ref{sec:cont_periodic_solutions}. The problem is that the inverse of
the hyperbolic operator $L$ is only bounded but not compact. Indeed, the
methods of Section~\ref{sec:cap} depend strongly on the compactness of the inverse operators
to validated the numerical solutions, because the compactness allows to obtain
estimates for the Galerkin approximation or truncation of the linear operators.
Further research and new ideas are required to validate rigorously the
numerical computations of the periodic solutions.
\end{enumerate}

\section{Acknowledgments} CGA is indebted to G. Flores and M.
Tejada-Wriedt for discussions related to this project.


\end{document}